\newcommand{\R}{\mathbb{R}}
\newcommand{\Rn}{{\R^n}}
\newcommand{\Rm}{{\R^m}}
\newcommand{\C}{\mathbb{C}}
\newcommand{\N}{\mathbb{N}}
\newcommand{\T}{\mathbb{T}}
\newcommand{\eps}{\varepsilon}
\newcommand{\cS}{{\mathscr{S}}}
\newcommand{\cQ}{{\mathcal{Q}}}
\newcommand{\cB}{{\mathcal{B}}}
\newcommand{\cR}{{\mathcal{R}}}
\newcommand{\cI}{{\mathcal{I}}}
\newcommand{\cO}{{\mathcal{O}}}
\newcommand{\cC}{{\mathcal{C}}}
\newcommand{\Linfty}{{L^\infty}}
\newcommand{\Lp}{{L^p}}
\newcommand{\Lploc}{{L^p_{\text{loc}}}}
\newcommand{\Lone}{{L^1}}
\newcommand{\Loneloc}{{L^1_{\text{loc}}}}
\newcommand{\JNp}{{\mathrm{JN}_p}}
\newcommand{\xihat}{{\widehat{x_i}}}
\newcommand{\Omegatil}{{\widetilde{\Omega}}}
\newcommand{\ra}{\rightarrow}
\DeclareMathOperator*{\esssup}{ess\,sup}
\def\Tr#1{\mathrm{Tr}{#1}}
\def\BMO#1#2{\mathrm{BMO}_{#1}^{#2}}
\def\CMO#1#2{\mathrm{CMO}_{#1}^{#2}}
\def\CBMO#1#2{\mathrm{CBMO}_{#1}^{#2}}
\def\VMO#1#2{\mathrm{VMO}_{#1}^{#2}}
\def\bBMO#1{\mathrm{\bf{BMO}}{}{#1}}
\def\norm#1#2{\lVert{#1}\rVert_{#2}}
\def\Xint#1{\mathchoice
{\XXint\displaystyle\textstyle{#1}}%
{\XXint\textstyle\scriptstyle{#1}}%
{\XXint\scriptstyle\scriptscriptstyle{#1}}%
{\XXint\scriptscriptstyle\scriptscriptstyle{#1}}%
\!\int}
\def\XXint#1#2#3{{\setbox0=\hbox{$#1{#2#3}{\int}$ }
\vcenter{\hbox{$#2#3$ }}\kern-.57\wd0}}
\def\dashint{\Xint-}
\newtheorem{theorem}{Theorem}[section]
\newtheorem{lemma}[theorem]{Lemma}
\newtheorem{proposition}[theorem]{Proposition}
\newtheorem{corollary}[theorem]{Corollary}
\theoremstyle{definition}
\newtheorem{definition}[theorem]{Definition}
\newtheorem{example}[theorem]{Example}
\newtheorem{problem}[theorem]{Problem}
\theoremstyle{remark}
\numberwithin{equation}{section}
\begin{document}

\title[$\BMO{}{}$ on shapes and sharp constants]{$\bBMO{}{}$ on shapes and sharp constants}

\author[Dafni]{Galia Dafni}
\address{(G.D.) Concordia University, Department of Mathematics and Statistics, Montr\'{e}al, Qu\'{e}bec, H3G-1M8, Canada}
\curraddr{}
\email{galia.dafni@concordia.ca}
\thanks{The authors were partially supported by the Natural Sciences and Engineering Research Council
(NSERC) of Canada, the Centre de recherches math\'{e}matiques (CRM), and the Fonds de recherche
du Qu\'{e}bec -- Nature et technologies (FRQNT)}

\author[Gibara]{Ryan Gibara}
\address{(R.G.) Concordia University, Department of Mathematics and Statistics, Montr\'{e}al, Qu\'{e}bec, H3G-1M8, Canada}
\curraddr{}
\email{ryan.gibara@concordia.ca}

\subjclass[2010]{46E30, 42B35, 26D15.}

\date{\today}

\begin{abstract}
We consider a very general definition of $\BMO{}{}$ on a domain in $\Rn$, where the mean oscillation is taken with respect to a basis of shapes, i.e.\ a collection of open sets covering the domain. We examine the basic properties and various inequalities that can be proved for such functions, with special emphasis on sharp constants. For the standard bases of shapes consisting of balls or cubes (classic $\BMO{}{}$), or rectangles (strong $\BMO{}{}$), we review known results, such as the boundedness of rearrangements and its consequences. Finally, we prove a product decomposition for $\BMO{}{}$ when the shapes exhibit some product structure, as in the case of strong $\BMO{}{}$.
\end{abstract}

\maketitle

\section{{\bf Introduction}}

First defined by John and Nirenberg in \cite{jn}, the space $\BMO{}{}$ of functions of bounded mean oscillation has served as the replacement for $\Linfty$ in situations where considering bounded functions is too restrictive. $\BMO{}{}$ has proven to be important in areas such as harmonic analysis, partly due to the duality with the Hardy space established by Fefferman in \cite{fe}, and partial differential equations, where its connection to elasticity motivated John to first consider the mean oscillation of functions in \cite{fj}. Additionally, one may regard $\BMO{}{}$ as a function space that is interesting to study in its own right. As such, there exist many complete references to the classical theory and its connection to various areas; for instance, see \cite{ga,gra,jo,st}. 

The mean oscillation of a function $f\in \Loneloc(\Rn)$ was initially defined over a cube $Q$ with sides parallel to the axes as 
\begin{equation}
\label{osc}
\dashint_{Q}|f-f_{Q}|,
\end{equation}
where $f_Q=\dashint_{Q}f$ and $\dashint_{Q}=\frac{1}{|Q|}\int_{Q}$. A function $f$ was then said to be in $\BMO{}{}$ if the quantity (\ref{osc}) is bounded independently of $Q$. Equivalently, as will be shown, the same space can be obtained by considering the mean oscillation with respect to balls; that is, replacing the cube $Q$ by a ball $B$ in (\ref{osc}). Using either characterization, $\BMO{}{}$ has since been defined in more general settings such as on domains, manifolds, and metric measure spaces (\cite{jon, bn1, cw2}). 

There has also been some attention given to the space defined by a mean oscillation condition over rectangles with sides parallel to the axes, either in $\Rn$ or on a domain in $\Rn$, appearing in the literature under various names. For instance, in \cite{ko2}, the space is called ``anisotropic $\BMO{}{}$" to highlight the contrast with cubes, while in papers such as \cite{cs,dlowy,dlwy,fs}, it goes by ``little $\BMO{}{}$" and is denoted by $\text{bmo}$. The notation $\text{bmo}$, however, had already been used for the ``local $\BMO{}{}$" space of Goldberg (\cite{gol}), a space that has been established as an independent topic of study (see, for instance, \cite{bu,dy,ya}). Yet another name for the space defined by mean oscillations on rectangles - the one we prefer - is the name ``strong $\BMO{}{}$". This name has been used in at least one paper (\cite{le1}), and it is analogous to the terminology of strong differentiation of the integral and the strong maximal function (\cite{cf,guz,jt,jmz,sa}), as well as strong Muckenhoupt weights (\cite{br,lpr}). 

In this paper we consider $\BMO{}{}$ on domains of $\Rn$ with respect to a geometry (what will be called a basis of shapes) more general than cubes, balls, or rectangles. The purpose of this is to provide a framework for examining the strongest results that can be obtained about functions in $\BMO{}{}$ by assuming only the weakest assumptions. To illustrate this, we provide the proofs of many basic properties of $\BMO{}{}$ functions that are known in the literature for the specialised bases of cubes, balls, or rectangles but that hold with more general bases of shapes. In some cases, the known proofs are elementary themselves and so our generalisation serves to emphasize the extent to which they are elementary and to which these properties are intrinsic to the definition of $\BMO{}{}$. In other cases, the known results follow from deeper theory and we are able to provide elementary proofs. We also prove many properties of $\BMO{}{}$ functions that may be well known, and may even be referred to in the literature, but for which we could not find a proof written down. An example of such a result is the completeness of $\BMO{}{}$, which is often deduced as a consequence of duality, or proven only for cubes in $\Rn$. We prove this result (Theorem~\ref{thm-complete}) for a general basis of shapes on a domain.

The paper has two primary focuses, the first being constants in inequalities related to $\BMO{}{}$. Considerable attention will be given to their dependence on an integrability parameter $p$, the basis of shapes used to define $\BMO{}{}$, and the dimension of the ambient Euclidean space. References to known results concerning sharp constants are given and connections between the sharp constants of various inequalities are established.   We distinguish between shapewise inequalities,  that is, inequalities that hold on any given shape, and norm inequalities.  We provide some elementary proofs of several shapewise inequalities and obtain sharp constants in the distinguished cases $p = 1$ and $p = 2$.  An example of such a result is the bound on truncations of a  $\BMO{}{}$ function (Proposition~\ref{pr:15}).  Although sharp shapewise inequalities are available for estimating the mean oscillation of the absolute value of a function in $\BMO{}{}$, the constant $2$ in the implied norm inequality - a statement of the boundedness of the map $f\mapsto|f|$ - is not sharp.  Rearrangements are a valuable tool that compensate for this, and we survey some known deep results giving norm bounds for decreasing rearrangements.

A second focus of this paper is on the product nature that $\BMO{}{}$ spaces may inherit from the shapes that define them. In the case where the shapes defining $\BMO{}{}$ have a certain product structure, namely that the collection of shapes coincides with the collection of Cartesian products of lower-dimensional shapes, a product structure is shown to be inherited by $\BMO{}{}$ under a mild hypothesis related to the theory of differentiation (Theorem~\ref{th:4}). This is particularly applicable to the case of strong $\BMO{}{}$.  It is important to note that the product nature studied here is different from that considered in the study of the space known as product $\BMO{}{}$ (see \cite{cf1,cf2}).

Following the preliminaries, Section 3 presents the basic theory of BMO on shapes. Section 4 concerns shapewise inequalities and the corresponding sharp constants. In Section 5, two rearrangement operators are defined and their boundedness on various function spaces is examined, with emphasis on $\BMO{}{}$. Section 6 discusses truncations of $\BMO{}{}$ functions and the cases where sharp inequalities can be obtained without the need to appeal to rearrangements. Section 7 gives a short survey of the John-Nirenberg inequality. Finally, in Section 8 we state and prove the product decomposition of certain $\BMO{}{}$ spaces.

This introduction is not meant as a review of the literature since that is part of the content of the paper, and references are given throughout the different sections.  The bibliography is by no means exhaustive, containing only a selection of the available literature, but it is collected with the hope of providing the reader with some standard or important references to the different topics touched upon here. 
 
\section{{\bf Preliminaries}}

Consider $\Rn$ with the Euclidean topology and Lebesgue measure, denoted by $|\cdot|$. By a domain we mean an open and connected set.
\begin{definition}
We call a shape in $\Rn$ any open set $S$ such that $0<|S|<\infty$. For a given domain $\Omega\subset \Rn$, we call a basis of shapes in $\Omega$ a collection $\cS$ of shapes $S$ such that $S\subset\Omega$ for all $S\in\cS$ and $\cS$ forms a cover of $\Omega$.
\end{definition}
Common examples of bases are the collections of all Euclidean balls, $\cB$, all cubes with sides parallel to the axes, $\cQ$, and all rectangles with sides parallel to the axes, $\cR$. In one dimension, these three choices degenerate to the collection of all (finite) open intervals, $\cI$. A variant of $\cB$ is $\cC$, the basis of all balls centered around some central point (usually the origin). Another commonly used collection is $\cQ_d$, the collection of all dyadic cubes, but
the open dyadic cubes cannot cover $\Omega$ unless $\Omega$ itself is a dyadic cube, so the proofs of some of the results below which rely on $\cS$ being an open cover (e.g.\ Proposition~\ref{pr:14} and Theorem~\ref{thm-complete})
may not apply.

One may speak about shapes that are balls with respect to a (quasi-)norm on $\Rn$, such as the $p$-``norms" $\norm{\cdot}{p}$ for $0<{p}\leq\infty$ when $n\geq{2}$. The case $p=2$ coincides with the basis $\cB$ and the case $p=\infty$ coincides with the basis $\cQ$, but other values of $p$ yield other interesting shapes. On the other hand, $\cR$ is not generated from a $p$-norm. 

Further examples of interesting bases have been studied in relation to the theory of differentiation of the integral, such as the collection of all rectangles with $j$ of the sidelengths being equal and the other $n-j$ being arbitrary (\cite{zy}), as well as the basis of all rectangles with sides parallel to the axes and sidelengths of the form $\big(\ell_1,\ell_2,\ldots,\phi(\ell_1,\ell_2,\ldots,\ell_{n-1})\big)$, where $\phi$ is a positive function that is monotone increasing in each variable separately (\cite{clm}).

\begin{definition}
Given two bases of shapes, $\cS$ and $\tilde{\cS}$, we say that $\cS$ is comparable to $\tilde{\cS}$, written $\cS\trianglelefteq\tilde{\cS}$, if there exist lower and upper comparability constants $c>0$ and $C>0$, depending only on $n$, such that for all $S\in\cS$ there exist $S_1,S_2\in\tilde{\cS}$ for which $S_1\subset S \subset S_2$ and $c|S_2|\leq|S|\leq C|S_1|$. If $\cS\trianglelefteq\tilde{\cS}$ and $\tilde{\cS}\trianglelefteq{\cS}$, then we say that $\cS$ and $\tilde{\cS}$ are equivalent, and write $\cS\approx\tilde{\cS}$. 
\end{definition}

An example of equivalent bases are $\cB$ and $\cQ$: one finds that $\cB\trianglelefteq\cQ$ with $c=\frac{\omega_n}{2^n}$ and $C=\omega_n\left(\frac{\sqrt{n}}{2}\right)^n$, and $\cQ\trianglelefteq\cB$ with $c=\frac{1}{\omega_n}\left(\frac{2}{\sqrt{n}}\right)^n$ and $C=\frac{2^n}{\omega_n}$, where $\omega_n$ is the volume of the unit ball in $\Rn$, and so $\cB\approx\cQ$.  The bases of shapes given by the balls in the other $p$-norms $\norm{\cdot}{p}$ for $1\leq{p}\leq\infty$ are also equivalent to these.

If $\cS\subset\tilde{\cS}$ then $\cS\trianglelefteq\tilde{\cS}$ with $c=C=1$. In particular, $\cQ\subset{\cR}$ and so $\cQ\trianglelefteq{\cR}$, but $\cR\ntrianglelefteq{\cQ}$ and so $\cQ\not\approx\cR$. 

Unless otherwise specified, we maintain the convention that $1\leq{p}<\infty$.  Moreover, many of the results implicitly assume that the functions are real-valued, but others
may hold also for complex-valued functions.  This should be understood from the context.

\section{{\bf $\bBMO{}{}$ spaces with respect to shapes}}

Consider a basis of shapes $\cS$. Given a shape $S\in\cS$, for a function $f\in \Lone(S)$, denote by $f_S$ its mean over $S$.
\begin{definition}
We say that a function satisfying $f\in \Lone(S)$ for all shapes $S\in\cS$ is in the space $\BMO{\cS}{p}(\Omega)$ if there exists a constant $K\geq 0$ such that
\begin{equation}\label{mosc}
\left(\dashint_{S}\!|f-f_S|^p\right)^{1/p}\leq{K},
\end{equation}
holds for all $S\in\cS$. 
\end{definition}
The quantity on the left-hand side of (\ref{mosc}) is called the $p$-mean oscillation of $f$ on $S$. For $f\in\BMO{\cS}{p}(\Omega)$, we define  $\norm{f}{\BMO{\cS}{p}}$ as the infimum of all $K$ for which (\ref{mosc}) holds for all $S\in\cS$. Note that the $p$-mean oscillation does not change if a constant is added to $f$; as such, it is sometimes useful to assume that a function has mean zero on a given shape. 

In the case where $p=1$, we will write $\BMO{\cS}{1}(\Omega)=\BMO{\cS}{}(\Omega)$.  For the classical $\BMO{}{}$ spaces we reserve the notation $\BMO{}{p}(\Omega)$ without explicit reference to the underlying basis of shapes ($\cQ$ or $\cB$).

We mention a partial answer to how $\BMO{\cS}{p}(\Omega)$ relate for different values of $p$. This question will be taken up in a later section when some more machinery has been developed. 

\begin{proposition}
\label{pr:1}
For any basis of shapes, $\BMO{\cS}{p_2}(\Omega)\subset \BMO{\cS}{p_1}(\Omega)$ with $\norm{f}{\BMO{\cS}{p_1}}\leq \norm{f}{\BMO{\cS}{p_2}}$ for $1\leq p_1\leq{p_2}<\infty$. In particular, this implies that $\BMO{\cS}{p}(\Omega)\subset \BMO{\cS}{}(\Omega)$ for all $1\leq p<\infty$.
\end{proposition}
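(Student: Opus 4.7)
The plan is to reduce the claim to Jensen's inequality applied shape by shape, exploiting the fact that the normalized integral $\dashint_S$ is integration against the probability measure $|S|^{-1}\,dx$ on $S$. Since $1 \leq p_1 \leq p_2 < \infty$, the function $\varphi(t) = t^{p_2/p_1}$ is convex on $[0,\infty)$, which is exactly what is needed to compare $p_1$-means and $p_2$-means against a probability measure.

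The main step is as follows. I would fix $f \in \BMO{\cS}{p_2}(\Omega)$; by definition $f \in \Lone(S)$ for every $S \in \cS$, so the average $f_S$ is well defined. For any $S \in \cS$, applying Jensen's inequality to the non-negative function $|f-f_S|^{p_1}$ against $|S|^{-1}\,dx$ restricted to $S$ yields
$$
\left(\dashint_S |f-f_S|^{p_1}\right)^{p_2/p_1} \leq \dashint_S |f-f_S|^{p_2}.
$$
Taking the $1/p_2$-th power of both sides produces the desired shapewise inequality
$$
\left(\dashint_S |f-f_S|^{p_1}\right)^{1/p_1} \leq \left(\dashint_S |f-f_S|^{p_2}\right)^{1/p_2}.
$$
Alternatively, the same estimate is an immediate consequence of H\"older's inequality with exponents $p_2/p_1$ and its conjugate, written in the form $\int_S |f-f_S|^{p_1} \leq (\int_S |f-f_S|^{p_2})^{p_1/p_2}|S|^{1-p_1/p_2}$ and then divided through by $|S|$.

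To conclude, I would take any $K \geq 0$ for which (\ref{mosc}) holds with exponent $p_2$, so that the right-hand side of the displayed shapewise inequality is bounded by $K$ uniformly in $S$. Passing to the infimum over such $K$ gives simultaneously the membership $f \in \BMO{\cS}{p_1}(\Omega)$ and the norm bound $\norm{f}{\BMO{\cS}{p_1}} \leq \norm{f}{\BMO{\cS}{p_2}}$. The second sentence of the statement, namely $\BMO{\cS}{p}(\Omega) \subset \BMO{\cS}{}(\Omega)$, is then the special case $p_1 = 1$, $p_2 = p$, using the convention $\BMO{\cS}{1} = \BMO{\cS}{}$ introduced just before the proposition.

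There is essentially no obstacle here: the proof is a one-line invocation of Jensen's inequality, and the only subtlety is verifying that its hypotheses apply on each shape. This is immediate from $0 < |S| < \infty$ in the definition of a shape, together with the requirement $f \in \Lone(S)$ built into the definition of $\BMO{\cS}{p_2}(\Omega)$, which ensures that $f_S$ makes sense and that all integrals above are finite (or, if $|f-f_S|^{p_2}$ fails to be integrable on some $S$, then $f \notin \BMO{\cS}{p_2}(\Omega)$ and there is nothing to prove).
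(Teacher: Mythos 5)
Your proof is correct and follows exactly the paper's route: the paper's entire argument is the invocation of Jensen's inequality with exponent $p_2/p_1 \geq 1$, which you have simply written out in full shapewise detail. Nothing further is needed.
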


\begin{proof}
This follows from Jensen's inequality with $p=\frac{p_2}{p_1}\geq{1}$.
\end{proof}

Next we show a lemma that implies, in particular, the local integrability of functions in $\BMO{\cS}{p}(\Omega)$. 

\begin{lemma}
\label{pr:18}
For any basis of shapes $\cS$, $\BMO{\cS}{p}(\Omega)\subset \Lploc(\Omega)$. 
\end{lemma}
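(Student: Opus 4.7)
The plan is to show that on each single shape $S \in \cS$ a function $f \in \BMO{\cS}{p}(\Omega)$ already lies in $L^p(S)$, and then use the covering property of $\cS$ together with compactness to pass from ``on each shape'' to ``on each compact subset of $\Omega$''.

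First I would fix $S \in \cS$ and use the definition directly. By hypothesis $f \in L^1(S)$, so the mean $f_S$ is a well-defined finite number, and the $p$-mean oscillation bound gives
\[
\left(\int_S |f - f_S|^p\right)^{1/p} \leq \norm{f}{\BMO{\cS}{p}} \, |S|^{1/p} < \infty.
\]
Since $f_S$ is constant and $|S|<\infty$, the constant function $f_S$ belongs to $L^p(S)$ with norm $|f_S||S|^{1/p}$, and $|f_S| \leq \dashint_S |f|$ is finite. Writing $f = (f - f_S) + f_S$ and applying Minkowski's inequality yields
\[
\norm{f}{L^p(S)} \leq \norm{f-f_S}{L^p(S)} + |f_S|\,|S|^{1/p} \leq \left(\norm{f}{\BMO{\cS}{p}} + |f_S|\right) |S|^{1/p} < \infty,
\]
so $f \in L^p(S)$ for every $S \in \cS$.

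Next I would upgrade this to local integrability. Let $E \subset \Omega$ be compact. Because $\cS$ is by definition a cover of $\Omega$ by open sets (shapes are open), the family $\{S : S \in \cS\}$ is an open cover of $E$, and compactness produces finitely many shapes $S_1,\dots,S_N \in \cS$ with $E \subset \bigcup_{i=1}^N S_i$. Then
\[
\int_E |f|^p \leq \sum_{i=1}^N \int_{S_i} |f|^p < \infty
\]
by the previous step, so $f \in L^p(E)$, and since $E$ was arbitrary, $f \in \Lploc(\Omega)$.

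There is no real obstacle here; the only subtle point is that the argument genuinely uses the hypothesis that $\cS$ is an open cover of $\Omega$, which is precisely the condition flagged in the remark following the definition of a basis of shapes (and which fails, for example, for open dyadic cubes on a general domain). Everything else is Minkowski's inequality plus a finite-subcover extraction.
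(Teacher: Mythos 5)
Your proposal is correct and follows essentially the same route as the paper: Minkowski's inequality on a single shape to get $f \in L^p(S)$ with the bound $(\norm{f}{\BMO{\cS}{p}} + |f_S|)|S|^{1/p}$, then a finite subcover of a compact set extracted from the open cover $\cS$. The only cosmetic difference is that you sum the integrals $\int_{S_i}|f|^p$ directly while the paper sums the $L^p$ norms; both yield the same conclusion.
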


\begin{proof}
Fix a shape $S\in\cS$ and a function $f\in \BMO{\cS}{p}(\Omega)$. By Minkowski's inequality on $\Lp(S,\frac{d{x}}{|S|})$,
$$
\left(\dashint_{S}\!|f|^p\right)^{1/p}\leq \left(\dashint_{S}\!|f-f_S|^p\right)^{1/p} +|f_S|
$$
and so
$$
\left(\int_{S}\!|f|^p\right)^{1/p}\leq |S|^{1/p}\left(\norm{f}{\BMO{\cS}{p}} +|f_S|\right).
$$

As $\cS$ covers all of $\Omega$, for any compact set $K\subset{\Omega}$ there exists a collection $\{S_i \}_{i=1}^{N}\subset\cS$ for some finite $N$ such that $K\subset\bigcup_{i=1}^{N}S_i$. Hence, using the previous calculation,
$$
\left(\int_{K}\!|f|^p\right)^{1/p}\leq\sum_{i=1}^{N}\left(\int_{S_i}\!|f|^p\right)^{1/p}\leq \sum_{i=1}^{N}|S_i|^{1/p}\left(\norm{f}{\BMO{\cS}{p}} +|f_{S_i}|\right)<\infty.
$$ 
\end{proof}

In spite of this, a function in $\BMO{\cS}{p}(\Omega)$ need not be locally bounded. If $\Omega$ contains the origin or is unbounded, $f(x)=\log|x|$ is the standard example of a function in $\BMO{}{}(\Omega) \setminus \Linfty(\Omega)$. The reverse inclusion, however, does hold:

\begin{proposition}
\label{pr:8}
For any basis of shapes, $\Linfty(\Omega)\subset \BMO{\cS}{p}(\Omega)$ with 
$$
\norm{f}{\BMO{\cS}{p}}\leq
\begin{cases}
\norm{f}{\Linfty}, & 1\leq{p}\leq{2};\\
2\norm{f}{\Linfty}, & 2<{p}<\infty.
\end{cases}
$$
\end{proposition}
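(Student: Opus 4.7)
The plan is to split the argument at $p=2$ and reduce the remaining cases to this one via monotonicity and the triangle inequality. Let $f\in\Linfty(\Omega)$ and fix a shape $S\in\cS$.

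For the case $p=2$, the key observation is that the mean $f_S$ is the minimizer of $c\mapsto \dashint_S|f-c|^2$ over constants $c$. Expanding the square gives the identity
\[
\dashint_S|f-f_S|^2 \;=\; \dashint_S|f|^2 \,-\, |f_S|^2,
\]
so dropping the nonnegative term $|f_S|^2$ and using $|f|\leq\norm{f}{\Linfty}$ almost everywhere yields $\bigl(\dashint_S|f-f_S|^2\bigr)^{1/2}\leq \norm{f}{\Linfty}$. Taking the supremum over $S\in\cS$ gives $\norm{f}{\BMO{\cS}{2}}\leq \norm{f}{\Linfty}$.

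For $1\leq p\leq 2$, I would simply invoke Proposition~\ref{pr:1}, which gives $\norm{f}{\BMO{\cS}{p}}\leq\norm{f}{\BMO{\cS}{2}}\leq\norm{f}{\Linfty}$. For $2<p<\infty$, I would use Minkowski's inequality on $\Lp(S,\tfrac{dx}{|S|})$ exactly as in the proof of Lemma~\ref{pr:18} to obtain
\[
\left(\dashint_S|f-f_S|^p\right)^{1/p} \leq \left(\dashint_S|f|^p\right)^{1/p} + |f_S| \leq 2\norm{f}{\Linfty},
\]
since each term is bounded by $\norm{f}{\Linfty}$.

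There is no real obstacle here; the only subtlety is ensuring the sharper constant $1$ in the range $1\leq p\leq 2$, which is precisely why the argument is routed through the $L^2$ minimization identity rather than the cruder triangle inequality used for $p>2$.
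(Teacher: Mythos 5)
Your proof is correct and follows essentially the same route as the paper: the $L^2$ identity $\dashint_S|f-f_S|^2=\dashint_S|f|^2-|f_S|^2$ (orthogonality of $f-f_S$ to constants) combined with Proposition~\ref{pr:1} for $1\leq p\leq 2$, and Minkowski's inequality for $p>2$.
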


\begin{proof}
Fix $f\in \Linfty(\Omega)$ and a shape $S\in\cS$. For any $1\leq{p}<\infty$, Minkowski's and Jensen's inequalities give 
$$
\left(\dashint_{S}\!|f-f_S|^p \right)^{1/p}\leq 2\left(\dashint_{S}\!|f|^p \right)^{1/p}\leq 2\norm{f}{\infty}.
$$

Restricting to $1\leq{p}\leq{2}$, one may use Proposition \ref{pr:1} with $p_2=2$ to arrive at
$$
\left(\dashint_{S}\!|f-f_{S}|^p \right)^{1/p}\leq\left(\dashint_{S}\!|f-f_{S}|^2 \right)^{1/2}.
$$
Making use of the Hilbert space structure on $L^2(S,\frac{d{x}}{|S|})$, observe that $f-f_S$ is orthogonal to constants and so it follows that 
$$
\dashint_{S}|f-f_S|^2 = \dashint_{S}\!|f|^2-|f_S|^2\leq\dashint_{S}\!|f|^2\leq \norm{f}{\infty}^2.
$$
\end{proof}

A simple example shows that the constant 1 obtained for $1\leq{p}\leq{2}$ is, in fact, sharp:

\begin{example}
\label{ex-simple}
Let $S$ be a shape on $\Omega$ and consider a function $f=\chi_{E}-\chi_{E^c}$, where $E$ is a measurable subset of $S$ such that $|E|=\frac{1}{2}|S|$ and $E^c=S\setminus E$. Then $f_S=0$,  $|f-f_{S}| = |f| \equiv 1$ on $S$ and so 
$$
\dashint_{S}\!|f-f_{S}|^p=1.
$$
Thus, $\norm{f}{\BMO{\cS}{p}}\geq{1}=\norm{f}{\Linfty}$.
\end{example}

There is no reason to believe that the constant 2 for $2<{p}<\infty$ is sharp, however, and so we pose the following question:

\begin{problem}
What is the smallest constant $c_{\infty}(p,\cS)$ such that the inequality $\norm{f}{\BMO{\cS}{p}}\leq c_{\infty}(p,\cS) \norm{f}{\Linfty}$ holds for all $f\in \BMO{\cS}{p}(\Omega)$?
\end{problem}

The solution to this problem was obtained by Leonchik in the case when $\Omega\subset\R$ and $\cS=\cI$.

\begin{theorem}[\cite{leo,ko2}]
$$
c_{\infty}(p,\cI)=2\sup_{0<h<1}\{h(1-h)^p+h^p(1-h)\}^{1/p}.
$$
\end{theorem}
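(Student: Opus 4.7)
My plan is to prove matching upper and lower bounds for $c_\infty(p,\cI)$, with the upper bound being the non-trivial inequality and the lower bound a direct generalization of Example \ref{ex-simple}.

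For the upper bound, by homogeneity it suffices to take $f\in L^\infty(\Omega)$ with $\|f\|_\infty\leq 1$ and an arbitrary interval $I\in\cI$. Set $m=f_I\in[-1,1]$ and decompose $I=A\sqcup B$, where $A=\{f>m\}$ and $B=\{f\leq m\}$. Since $f_I=m$, the quantity $\alpha:=\int_A(f-m)=\int_B(m-f)$ is well-defined. Because $0<f-m\leq 1-m$ on $A$, the elementary inequality $t^p\leq(1-m)^{p-1}t$ (valid for $0\leq t\leq 1-m$ and $p\geq 1$) gives $\int_A(f-m)^p\leq(1-m)^{p-1}\alpha$, and symmetrically $\int_B(m-f)^p\leq(1+m)^{p-1}\alpha$. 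From $\alpha\leq(1-m)|A|$ and $\alpha\leq(1+m)|B|$ I obtain $|I|\geq \alpha\bigl[\tfrac{1}{1-m}+\tfrac{1}{1+m}\bigr]=\tfrac{2\alpha}{1-m^2}$, so $\alpha\leq\tfrac{(1-m^2)|I|}{2}$. Combining the three estimates,
\[
\dashint_{I}|f-m|^p \;\leq\; \frac{1-m^2}{2}\bigl[(1-m)^{p-1}+(1+m)^{p-1}\bigr],
\]
and the substitution $h=(1+m)/2$ converts the right-hand side into $2^p\{h(1-h)^p+h^p(1-h)\}$. Taking $p$-th roots and the supremum over $m\in(-1,1)$, equivalently $h\in(0,1)$, yields the upper bound $\|f\|_{\BMO{\cI}{p}}\leq 2\sup_{h}\{h(1-h)^p+h^p(1-h)\}^{1/p}\,\|f\|_\infty$.

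For the lower bound I imitate Example \ref{ex-simple}. Fix $h\in(0,1)$ and any interval $I\subset\Omega$ (which exists since $\cI$ covers $\Omega$). Choose a measurable $E\subset I$ with $|E|=h|I|$ and set $f=\chi_E-\chi_{I\setminus E}$ on $I$, extended by $0$ to $\Omega\setminus I$; then $\|f\|_\infty=1$ and $f_I=2h-1$, so $|f-f_I|=2(1-h)$ on $E$ and $|f-f_I|=2h$ on $I\setminus E$. A direct calculation gives $\dashint_I|f-f_I|^p=2^p\{h(1-h)^p+h^p(1-h)\}$. Since the $\BMO{\cI}{p}$ norm dominates the $p$-mean oscillation on any single interval, this forces $c_\infty(p,\cI)\geq 2\{h(1-h)^p+h^p(1-h)\}^{1/p}$ for every $h\in(0,1)$, and taking the supremum completes the proof.

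The main analytical step is the convexity estimate $t^p\leq M^{p-1}t$ for $0\leq t\leq M$, paired with the volume constraint that turns $\int_A(f-m)=\int_B(m-f)$ into a bound on $\alpha$; what makes the answer sharp is that both inequalities become equalities precisely for the $\pm 1$-valued functions used in the lower bound. The bookkeeping of the ``positive'' and ``negative'' pieces $A$ and $B$ around the mean is the obvious place for the argument to go astray, but it is guided by, and ultimately mirrors, the structure of the extremal example.
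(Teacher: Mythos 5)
Your proof is correct. Note that the paper does not prove this theorem at all --- it is quoted from Leonchik and Korenovskii \cite{leo,ko2} --- so your argument is a self-contained substitute rather than a variant of anything in the text. Both halves check out: the lower bound is the right two-valued generalization of Example~\ref{ex-simple} (with $f_I=2h-1$ the oscillation is $2(1-h)$ on $E$ and $2h$ on its complement, giving exactly $2^p\{h(1-h)^p+h^p(1-h)\}$), and the upper bound correctly combines the monotonicity estimate $t^p\le M^{p-1}t$ on the sets $A=\{f>m\}$ and $B=\{f\le m\}$ with the balance condition $\int_A(f-m)=\int_B(m-f)=\alpha$ and the measure constraint $|A|+|B|=|I|$ to get $\alpha\le\tfrac{(1-m^2)|I|}{2}$; the substitution $h=(1+m)/2$ does turn $\tfrac{1-m^2}{2}\bigl[(1-m)^{p-1}+(1+m)^{p-1}\bigr]$ into $2^p\{h(1-h)^p+h^p(1-h)\}$. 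The degenerate cases $m=\pm1$ (where the oscillation vanishes) are harmless, and the fact that equality throughout forces $f$ to be two-valued explains why the bound is attained in the limit by your extremal family. This is exactly the kind of shapewise-sharp argument the paper favours elsewhere (compare the proofs around Propositions~\ref{pr:6} and \ref{pr:9}), and it would be a reasonable proof to include where the paper currently only gives a citation.
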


An analysis of this expression (\cite{ko2}) shows that $c_{\infty}(p,\cI)=1$ for $1\leq{p}\leq{3}$, improving on Proposition \ref{pr:8} for $2<p\leq{3}$. Moreover, $c_{\infty}(p,\cI)$ is monotone in $p$ with $1<c_{\infty}(p,\cI)<2$ for $p>3$ and $c_{\infty}(p,\cI)\rightarrow{2}$ as $p\rightarrow{\infty}$.

It is easy to see that $\norm{\cdot}{\BMO{\cS}{p}}$ defines a seminorm. It cannot be a norm, however, as a function $f$ that is almost everywhere equal to a constant will satisfy $\norm{f}{\BMO{\cS}{p}}=0$. What we can show is that the quantity $\norm{\cdot}{\BMO{\cS}{p}}$ defines a norm modulo constants. 

In the classical case of $\BMO{}{}(\Rn)$, the proof (see \cite{gra}) relies on the fact that $\cB$ contains $\cC$ and so $\Rn$ may be written as the union of countably-many concentric shapes. When on a domain that is also a shape, the proof is immediate. In our general setting, however, we may not be in a situation where $\Omega$ is a shape or $\cS$ contains a distinguished subcollection of nested shapes that exhausts all of $\Omega$; for an example, consider the case where $\Omega$ is a rectangle that is not a cube with $\cS=\cQ$. As such, the proof must be adapted. We do so in a way that relies on shapes being open sets that cover $\Omega$, and on $\Omega$ being connected and Lindel\"{o}f.

\begin{proposition} 
\label{pr:14}
For any basis of shapes, $\norm{f}{\BMO{\cS}{p}}=0$ if and only if $f$ is almost everywhere equal to some constant.
\end{proposition}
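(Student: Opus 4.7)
The reverse implication is immediate: if $f$ equals a constant $c$ almost everywhere, then $f_S = c$ and the $p$-mean oscillation vanishes on every shape. For the forward direction, I would first note that $\norm{f}{\BMO{\cS}{p}}=0$ forces $\dashint_S |f-f_S|^p = 0$ on each $S \in \cS$, so $f$ equals the constant $c_S := f_S$ almost everywhere on $S$. The task thus reduces to showing that all the locally defined constants $c_S$ agree.

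The key observation is that if two shapes $S, S' \in \cS$ have nonempty intersection, then since shapes are open, $|S \cap S'| > 0$; the relations $f = c_S$ a.e.\ and $f = c_{S'}$ a.e.\ on this common set of positive measure then force $c_S = c_{S'}$. Using this, I would fix any $S_0 \in \cS$, set $c_0 := c_{S_0}$, and decompose $\Omega$ into the two open sets
$$
U = \bigcup \{S \in \cS : c_S = c_0\}, \qquad V = \bigcup \{S \in \cS : c_S \neq c_0\}.
$$
The overlap observation gives $U \cap V = \emptyset$ (any common point would lie in two shapes with incompatible constants), while the covering property gives $U \cup V = \Omega$. Since $\Omega$ is connected and $S_0 \subset U$ is nonempty, we conclude $V = \emptyset$, so $U = \Omega$.

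To pass from the statement ``every point of $\Omega$ lies in some shape on which $f = c_0$ a.e.'' to ``$f = c_0$ a.e.\ on $\Omega$'', I would invoke the Lindel\"of property of $\Omega$ (as a subset of $\Rn$ with the Euclidean topology) to extract from the cover $\{S \in \cS : c_S = c_0\}$ of $\Omega$ a countable subcover $\{S_k\}_{k \ge 1}$; then the exceptional set $\{x \in \Omega : f(x) \neq c_0\}$ is contained in $\bigcup_k \{x \in S_k : f(x) \neq c_0\}$, a countable union of null sets, and hence is null.

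The main obstacle is not technical but conceptual: the classical argument via a fixed family of nested concentric shapes exhausting $\Omega$ is unavailable in this generality, and one must recognize that the correct ingredients to replace it are precisely openness of shapes, connectedness of $\Omega$, and the Lindel\"of property, as foreshadowed in the paragraph preceding the statement.
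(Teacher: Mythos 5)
Your proof is correct and follows essentially the same route as the paper's: the same $U$/$V$ decomposition into unions of shapes whose means do or do not equal $c_0$, disjointness via the positive measure of the open intersection of two overlapping shapes, connectedness of $\Omega$ to kill $V$, and the Lindel\"of property to pass to a countable union of null sets. The only difference is cosmetic (you invoke Lindel\"of at the end rather than at the outset), so nothing further is needed.
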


\begin{proof}
One direction is immediate. For the other direction, fix $f\in \BMO{\cS}{p}(\Omega)$ such that $\norm{f}{\BMO{\cS}{p}}=0$. It follows that $f=f_S$ almost everywhere on $S$ for each $S\in\cS$. 

Fix $S_0\in\cS$ and let $C_0 = f_{S_0}$.  Set
$$
U=\bigcup\{S\in\cS:f_S=C_0 \}.
$$ 
Using the Lindel\"{o}f property of $\Omega$, we may assume that $U$ is defined by a countable union. It follows that $f=C_0$ almost everywhere on $U$ since $f=f_S=C_0$ almost everywhere for each $S$ comprising $U$. The goal now is to show that $\Omega=U$.

Now, let 
$$
V=\bigcup\{S\in\cS:f_{S}\neq C_0 \}.
$$ 
Since $\cS$ covers $\Omega$, we have $\Omega=U \cup V$. Thus, in order to show that $\Omega=U$, we need to show that $V$ is empty. To do this, we note that both $U$ and $V$ are open sets and so, since $\Omega$ is connected, it suffices to show that $U$ and $V$ are disjoint.

Suppose that $x\in U\cap V$. Then there is an $S_1$ containing $x$ which is in $U$ and an $S_2$ containing $x$ which is in $V$. In particular, $f=C_0$ almost everywhere on $S_1$ and $f\neq{C_0}$ almost everywhere on $S_2$. However, this is impossible as $S_1$ and $S_2$ are open sets of positive measure with non-empty intersection and so $S_1\cap S_2$ must have positive measure. Therefore, $U\cap{V}=\emptyset$ and the result follows. 
\end{proof}

As Proposition \ref{pr:14} implies that $\BMO{\cS}{p}(\Omega)/\C$ is a normed linear space, a natural question is whether it is complete. For $\BMO{}{}(\Rn)/\C$ this is a corollary of Fefferman's theorem that identifies it as the dual of the real Hardy space (\cite{fe,fst}). 

A proof that $\BMO{}{}(\Rn)/\C$ is a Banach space that does not pass through duality came a few years later and is due to Neri (\cite{ne}). This is another example of a proof that relies on the fact that $\cB$ contains $\cC$ (or, equivalently, that $\cQ$ contains an analogue of $\cC$ but for cubes). The core idea, however, may be adapted to our more general setting. The proof below makes use of the fact that shapes are open and that $\Omega$ is path connected since it is both open and connected. 

\begin{theorem}
\label{thm-complete}
For any basis of shapes, $ \BMO{\cS}{p}(\Omega)$ is complete.
\end{theorem}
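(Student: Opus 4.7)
The plan is to adapt Neri's strategy for classical $\BMO{}{}(\Rn)$ to this general setting by replacing the role of concentric shapes (not available here in general) with a chain argument along overlapping shapes, exploiting the openness of shapes and the path-connectedness of $\Omega$. Start with a Cauchy sequence $\{[f_k]\}$ in the quotient $\BMO{\cS}{p}(\Omega)/\C$, fix a shape $S_0\in\cS$, and in each equivalence class select the representative $f_k$ with $(f_k)_{S_0}=0$. Then $f_k-f_j$ has mean zero on $S_0$, so \eqref{mosc} applied on $S_0$ gives
\[
\norm{f_k-f_j}{\Lp(S_0)} \;\le\; |S_0|^{1/p}\,\norm{f_k-f_j}{\BMO{\cS}{p}},
\]
and hence $\{f_k\}$ converges in $\Lp(S_0)$ to some $f$, defined so far only on $S_0$.

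The next step is to establish a propagation claim: if $\{f_k\}$ converges in $\Lp(S)$ for some $S\in\cS$ and $S'\in\cS$ satisfies $|S\cap S'|>0$, then $\{f_k\}$ also converges in $\Lp(S')$, and the two limits agree a.e.\ on $S\cap S'$. To prove it, set $g_{jk}=f_j-f_k$ and compare the means on $S'$ and on $A:=S\cap S'$: extending integration from $A$ to $S'$ and invoking Proposition~\ref{pr:1} yields
\[
|(g_{jk})_{S'}-(g_{jk})_{A}| \;\le\; \frac{|S'|}{|A|}\,\norm{g_{jk}}{\BMO{\cS}{p}},
\]
while $(g_{jk})_{A}\to 0$ since $g_{jk}\to 0$ in $\Lp(A)$ by restriction. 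This controls $(g_{jk})_{S'}$, and combining with \eqref{mosc} on $S'$ shows that $\{f_k\}$ is Cauchy in $\Lp(S')$; its limit coincides a.e.\ with $f$ on $A$ by uniqueness of $\Lp$ limits. To extend $f$ consistently to all of $\Omega$, I use that $\Omega$, being open and connected, is path-connected: for each $x\in\Omega$ pick a continuous path from a point of $S_0$ to $x$, cover its compact image by finitely many shapes $S_0,S_1,\dots,S_m$ chosen so that consecutive shapes share a point of the path and hence, by openness, a set of positive measure, and iterate the propagation claim along the chain to define $f$ on $S_m\ni x$. Independence from the chain follows because any two chains produce $\Lp$-limits of the same sequence on their overlaps, and a Lindel\"of covering argument as in the proof of Lemma~\ref{pr:18} then yields $f\in\Lploc(\Omega)$ with $f_k\to f$ in $\Lploc$.

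Finally, to see $f\in\BMO{\cS}{p}(\Omega)$ and $f_k\to f$ in seminorm: a Cauchy sequence is bounded, say $\sup_k\norm{f_k}{\BMO{\cS}{p}}\le M$. Fix $S\in\cS$ and $\eps>0$, and choose $N$ with $\norm{f_k-f_j}{\BMO{\cS}{p}}<\eps$ for $j,k\ge N$. Using $\Lp(S)$ convergence, which also yields $(f_k)_S\to f_S$, pass to the limit $j\to\infty$ in
\[
\left(\dashint_S |(f_k-f_j)-(f_k-f_j)_S|^p\right)^{1/p}\le\eps
\]
to obtain $\norm{f_k-f}{\BMO{\cS}{p}}\le\eps$, and then $\norm{f}{\BMO{\cS}{p}}\le M+\eps$ by the triangle inequality. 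The main obstacle I expect is the propagation step: without a distinguished exhaustion of $\Omega$ by nested shapes one must work pair by pair, and the ratio $|S'|/|S\cap S'|$ arising from thin overlaps is the quantitative price, though it is harmless for the qualitative completeness claim since in each individual step this ratio is a fixed finite constant.
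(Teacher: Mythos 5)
Your proof is correct, and while it shares the paper's overall skeleton (Cauchy in $\Lp$ on each shape, path-connectedness of $\Omega$, finite chains of pairwise-overlapping open shapes extracted from a cover of a compact path), the gluing mechanism is genuinely different. The paper takes, for each shape $S$, the mean-zero limit $f^S$ of $f_i-(f_i)_S$ in $\Lp(S)$, and must then reconcile these local limits by introducing constants $C(S_1,S_2)=\lim\big((f_i)_{S_2}-(f_i)_{S_1}\big)$, verifying antisymmetry and an additivity (cocycle) identity along loops of shapes in order to show the patched definition $f(x)=f^{S_k}(x)+\sum C(S_j,S_{j+1})$ is independent of the chain. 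You instead normalize once, fixing representatives with $(f_k)_{S_0}=0$, and prove a propagation lemma: convergence of the single normalized sequence in $\Lp(S)$ forces convergence in $\Lp(S')$ whenever $|S\cap S'|>0$, via the elementary estimate $|(g)_{S'}-(g)_{S\cap S'}|\leq \frac{|S'|}{|S\cap S'|}\norm{g}{\BMO{\cS}{}}$ together with Proposition~\ref{pr:1}. Since the limit on every shape is then the $\Lp$-limit of one fixed sequence, consistency across chains is automatic and the cocycle bookkeeping disappears; the price is the overlap ratio $|S'|/|S\cap S'|$, which, as you note, is harmless since it is finite at each individual step and no uniformity is needed. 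One small point worth making explicit: to run your final limiting argument on an arbitrary shape $S$, you should append $S$ itself as the last link of the chain through a point $x\in S$ (its intersection with the preceding shape is open and nonempty, hence of positive measure), so that the propagation lemma delivers convergence in $\Lp(S)$ and not merely on some $S_m\ni x$. The concluding passage to the limit in $j$ and the bound $\norm{f}{\BMO{\cS}{p}}\leq M+\eps$ match the paper's final step.
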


\begin{proof}
Let $\{f_i\}$ be Cauchy in $\BMO{\cS}{p}(\Omega)$. Then, for any shape $S\in\cS$, the sequence $\{f_i-(f_i)_S \}$ is Cauchy in $\Lp(S)$. Since $\Lp(S)$ is complete, there exists a function
$f^S\in \Lp(S)$ such that $f_i-(f_i)_S\rightarrow f^S$ in $\Lp(S)$. The function $f^S$ can be seen to have mean zero on $S$: since $f_i-(f_i)_S$ converges to $f^S$ in $\Lone(S)$, it follows that 
\begin{equation}
\label{lim}
\dashint_{S}\!f^S=\lim_{i\rightarrow\infty}\dashint_{S}f_i-(f_i)_S=0.
\end{equation}

If we have two shapes $S_1,S_2\in\cS$ such that $S_1\cap S_2\neq\emptyset$, by the above there is a function $f^{S_1}\in \Lp(S_1)$ such that $f_i-(f_i)_{S_1}\rightarrow f^{S_1}$ in $\Lp(S_1)$ and a function $f^{S_2}\in \Lp(S_2)$ such that $f_i-(f_i)_{S_2}\rightarrow f^{S_2}$ in $\Lp(S_2)$. Since both of these hold in $\Lp(S_1\cap S_2)$, we have
$$
(f_i)_{S_2}-(f_i)_{S_1}=[f_i-(f_i)_{S_1} ]- [f_i-(f_i)_{S_2} ]\rightarrow f^{S_1}-f^{S_2} \quad \mbox{in } \Lp(S_1\cap S_2).$$
This implies that the sequence $C_i(S_1,S_2)=(f_i)_{S_2}-(f_i)_{S_1}$  converges as constants to a limit that we denote by $C(S_1,S_2)$, with
$$f^{S_1}-f^{S_2} \equiv C(S_1,S_2)\quad \mbox{on }S_1\cap S_2.$$

From their definition, these constants are antisymmetric: 
$$C(S_1,S_2) = -C(S_2,S_1).$$
Moreover, they possess an additive property that will be useful in later computations.  By a finite chain of shapes we mean a finite sequence $\{S_j\}_{j=1}^{k}\subset\cS$ such that $S_j\cap S_{j+1}\neq\emptyset$ for all $1\leq j\leq{k-1}$. Furthermore, by a loop of shapes we mean a finite chain $\{S_j\}_{j=1}^{k}$ such that $S_1\cap S_{k}\neq\emptyset$.
If $\{S_j \}_{j=1}^{k}$ is a loop of shapes, then 
\begin{equation}
\label{add}
C(S_1,S_k)= \sum_{j=1}^{k-1} C(S_j,S_{j+1}).
\end{equation}
To see this, consider the telescoping sum
$$
(f_i)_{S_k}-(f_i)_{S_1}=\sum_{j=1}^{k-1}(f_i)_{S_{j+1}}-(f_i)_{S_j},
$$
for a fixed $i$. The formula (\ref{add}) follows from this as each $(f_i)_{S_{j+1}}-(f_i)_{S_j}$ converges to $C(S_j,S_{j+1})$ since $S_{j}\cap S_{j+1}\neq\emptyset$ and $(f_i)_{S_k}-(f_i)_{S_1}$ converges to $C(S_1,S_k)$ since $S_1\cap S_k\neq\emptyset$.

Let us now fix a shape $S_0\in\cS$ and consider another shape $S\in\cS$ such that $S_0\cap S=\emptyset$. Since $\Omega$ is a path-connected set, for any pair of points $(x,y)\in S_0\times S$ there exists a path $\gamma_{x,y}:[0,1]\rightarrow \Omega$ such that $\gamma_{x,y}(0)=x$ and $\gamma_{x,y}(1)=y$. Since $\cS$ covers $\Omega$ and the image of $\gamma_{x,y}$ is a compact set, we may cover $\gamma_{x,y}$ by a finite number of shapes. From this we may extract a finite chain connecting $S$ to $S_0$.

We now come to building the limit function $f$. If $x\in S_0$, then set $f(x)=f^{S_0}(x)$, where $f^{S_0}$ is as defined earlier. If $x\notin S_0$, then there is some shape $S$ containing $x$ and, by the preceding argument, a finite chain of shapes $\{S_j \}_{j=1}^{k}$ where $S_k=S$. In this case, set
\begin{equation}
\label{def}
f(x)=f^{S_k}(x)+\sum_{j=0}^{k-1} C(S_j,S_{j+1}).
\end{equation}
The first goal is to show that this is well defined. Let $\{\tilde{S}_j\}_{j=1}^{\ell}$ be another finite chain connecting some $\tilde{S}_\ell$ with $x \in \tilde{S}_\ell$ to $S_0=\tilde{S}_0$. Then we need to show that
\begin{equation}
\label{need}
f^{S_k}(x)+\sum_{j=0}^{k-1} C(S_j,S_{j+1})=f^{\tilde{S}_\ell}(x)+\sum_{j=0}^{\ell-1} C(\tilde{S}_j,\tilde{S}_{j+1}).
\end{equation}
First, we use the fact that $x \in S_k\cap \tilde{S}_\ell$ to write $ f^{S_k}(x)- f^{\tilde{S}_\ell}(x) = C(S_{k},\tilde{S}_\ell)$. Then, from the antisymmetry property of the constants, (\ref{need}) is equivalent to  
$$
C(S_{k},\tilde{S}_\ell)= C(S_k,S_{k-1})+\cdots+C(S_1,S_0)+C(S_0,\tilde{S}_{1})+\cdots+ C(\tilde{S}_{\ell-1},\tilde{S}_{\ell}), 
$$
which follows from (\ref{add}) as $\{S_{k},S_{k-1},\ldots,S_{1},S_{0},\tilde{S}_1,\tilde{S}_2,\ldots,\tilde{S}_{\ell}\}$ forms a loop.

Finally, we show that $f_i\rightarrow f$ in $\BMO{\cS}{p}(\Omega)$. Fixing a shape $S\in\cS$, choose a finite chain $\{S_j\}_{j=1}^{k}$ such that $S_k=S$. By (\ref{def}), on $S$ we have that $f=f^{S}$ modulo constants, and so, using (\ref{lim}) and the definition of $f^S$,
\[
\begin{split}
\dashint_{S}|(f_i(x)-f(x))-(f_i-f)_S|^p\,dx&=\dashint_{S}|(f_i(x)-f^S(x))-(f_i-f^S)_S|^p\,dx\\&=\dashint_{S}|f_i(x)-(f_i)_{S}-f^S(x)|^p\,dx\\&\rightarrow{0}\qquad\text{as $i\rightarrow\infty$}.
\end{split}
\]
\end{proof}

\section{{\bf Shapewise inequalities on $\bBMO{}{}$}}
A ``shapewise" inequality is an inequality that holds for each shape $S$ in a given basis. In this section, considerable attention will be given to highlighting those situations where the constants in these inequalities are known to be sharp. A recurring theme is the following: sharp results are mainly known for $p=1$ and for $p=2$ and, in fact, the situation for $p=2$ is usually simple. The examples given demonstrating sharpness are straightforward generalisations of some of those found in \cite{ko2}.

For this section, we assume that $\cS$ is an arbitrary basis of shapes and that $f\in \Lone(S)$ for every $S\in\cS$.

We begin by considering inequalities that provide equivalent characterizations of $\BMO{\cS}{p}(\Omega)$. As with the classical $\BMO{}{}$ space, one can estimate the mean oscillation of a function on a shape by a double integral that is often easier to use for calculations but that comes at the loss of a constant. 

\begin{proposition}
\label{pr:9}
For any shape $S\in \cS$, 
$$
\frac{1}{2}\left(\dashint_{S}\dashint_{S}\!|f(x)-f(y)|^p\,dy\,dx\right)^{\frac{1}{p}}\!\!\leq\left(\dashint_{S}\!|f-f_{S}|^p\right)^{\frac{1}{p}}\!\!\leq\left(\dashint_{S}\dashint_{S}\!|f(x)-f(y)|^p\,dy\,dx\right)^{\frac{1}{p}}\!\!.
$$
\end{proposition}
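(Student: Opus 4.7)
The plan is to prove each inequality by a one-line manipulation, using that the mean $f_S$ itself is an average and exploiting Minkowski's / Jensen's inequality. Neither direction is deep; the whole statement is really a reformulation of $\|f-f_S\|_{L^p(S,dx/|S|)} \approx \|f(x)-f(y)\|_{L^p(S\times S, dx\,dy/|S|^2)}$ up to a factor of $2$.

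For the right-hand inequality, I would write
$$
f(x) - f_S = \dashint_S \bigl(f(x) - f(y)\bigr)\,dy
$$
and apply Jensen's inequality (or equivalently Hölder's inequality) inside the $p$-th power, obtaining
$$
|f(x) - f_S|^p \leq \dashint_S |f(x) - f(y)|^p\,dy.
$$
Integrating in $x$ with respect to $\dashint_S$ and taking $p$-th roots gives the upper bound with constant $1$.

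For the left-hand inequality, I would use the triangle inequality $|f(x)-f(y)| \leq |f(x)-f_S| + |f(y)-f_S|$ and then Minkowski's inequality on $L^p(S\times S, \tfrac{dx\,dy}{|S|^2})$:
$$
\left(\dashint_S \dashint_S |f(x)-f(y)|^p\,dy\,dx\right)^{1/p} \leq \left(\dashint_S \dashint_S |f(x)-f_S|^p\,dy\,dx\right)^{1/p} + \left(\dashint_S \dashint_S |f(y)-f_S|^p\,dy\,dx\right)^{1/p}.
$$
Each of the two terms on the right collapses (the integrand depends on only one variable) to $\left(\dashint_S |f-f_S|^p\right)^{1/p}$, yielding the factor of $2$ and thus the stated lower bound.

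Both arguments only use the hypothesis $f\in L^1(S)$ (together with the implicit assumption that the $p$-mean oscillation is finite, so that the integrals make sense), so no feature of the basis $\cS$ beyond the fixed shape $S$ is involved. There is essentially no obstacle: the only thing to be careful about is keeping track of normalised versus unnormalised integrals so that the factor of $|S|$ does not appear, which is automatic once we consistently use $\dashint_S$.
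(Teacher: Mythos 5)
Your proposal is correct and matches the paper's proof essentially verbatim: the upper bound via writing $f(x)-f_S=\dashint_S(f(x)-f(y))\,dy$ and applying Jensen, and the lower bound via Minkowski's inequality on $\Lp(S\times S,\frac{dx\,dy}{|S|^2})$ after splitting $f(x)-f(y)=(f(x)-f_S)-(f(y)-f_S)$. No differences worth noting.
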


\begin{proof}
Fix a shape $S\in\cS$. By Jensen's inequality we have that 
$$
\dashint_{S}\!|f(x)-f_{S}|^p\,dx=\dashint_{S}\!\left|\dashint_{S}\!\big(f(x)- f(y)\big)\,dy\right|^p\,dx\leq \dashint_{S}\dashint_{S}\!|f(x)- f(y)|^p\,dy\,dx
$$
and by Minkowski's inequality on $\Lp(S\times S,\frac{d{x}d{y}}{|S|^2} )$ we have that
\[
\begin{split}
\left(\dashint_{S}\dashint_{S}\!|f(x)- f(y)|^p\,dy\,dx\right)^{1/p} &=\left(\dashint_{S}\dashint_{S}\!\left|\big(f(x)-f_S\big)- \big(f(y)-f_S\big)\right|^p\,dy\,dx\right)^{1/p}\\&\leq 2 \left( \dashint_{S}\!|f-f_S|^p\right)^{1/p}.
\end{split}
\]
\end{proof}

When $p=1$, the following examples show that the constants in this inequality are sharp.

\begin{example}
\label{ex:2}
Let $S$ be a shape in $\Omega$ and consider a function $f=\chi_{E}$, where $E$ is a measurable subset of $S$ such that $|E|=\frac{1}{2}|S|$. Then $f_S=\frac{1}{2}$ and so $|f-f_S|=\frac{1}{2}\chi_{S}$, yielding
$$
\dashint_{S}\!|f-f_{S}|=\frac{1}{2}.
$$
Writing $E^c=S\setminus{E}$, we have that $|f(x)-f(y)|=0$ for $(x,y)\in E\times E$ or $(x,y)\in E^c\times E^c$, and that $|f(x)-f(y)|=1$ for $(x,y)\in E^c\times E$; hence,
$$
\dashint_{S}\dashint_{S}\!|f(x)-f(y)|\,dy\,dx = \frac{2}{|S|^2}\int_{E^c}\int_{E}\!|f(x)-f(y)|\,dy\,dx=\frac{2|E^c||E|}{|S|^2}=\frac{1}{2}.
$$
Therefore, the right-hand side constant 1 is sharp. 
\end{example}

\begin{example}
\label{ex:3}
Now consider a function ${f}=\chi_{E_1}-\chi_{E_3}$, where $E_1,E_2,E_3$ are measurable subsets of $S$ such that $S=E_1\cup E_2\cup E_3$ is a disjoint union (up to a set of measure zero) and $|E_1|=|E_3|=\beta|S|$ for some $0<\beta<\frac{1}{2}$. Then, $f_S=0$ and so
$$
\dashint_{S}\!|f-f_{S}| =  \dashint_{S}\!|f|= 2\beta.
$$
Since $|f(x)-f(y)|=0$ for $(x,y)\in E_j\times E_j$, $j=1,2,3$, $|f(x)-f(y)|=1$ for $(x,y)\in E_i\times E_j$ when $|i-j|=1$, and $|f(x)-f(y)|=2$ for $(x,y)\in E_i\times E_j$ when $|i-j|=2$, we have that
$$
\dashint_{S}\dashint_{S}\!|f(x)-f(y)|\,dy\,dx=\frac{4\beta|S|(|S|-2\beta|S|)+4\beta^2|S|^2}{|S|^2}=4\beta(1-2\beta)+4\beta^2.
$$
As
$$
\frac{2\beta}{4\beta(1-2\beta)+4\beta^2} = \frac{1}{2-2\beta}\rightarrow\frac{1}{2}
$$
as $\beta\rightarrow{0^+}$, the left-hand side constant $\frac{1}{2}$ is sharp.
\end{example}

For other values of $p$, however, these examples tell us nothing about the sharpness of the constants in Proposition \ref{pr:9}. In fact, the constants are not sharp for $p=2$. In the following proposition, as in many to follow, the additional Hilbert space structure afforded to us yields a sharp statement (in this case, an equality) for little work.

\begin{proposition}
For any shape $S\in \cS$, 
$$
\left( \dashint_{S}\dashint_{S}\!|f(x)-f(y)|^2\,dy\,dx\right)^{1/2}=\sqrt{2}\left(\dashint_{S}\!|f-f_{S}|^2\right)^{1/2}.
$$
\end{proposition}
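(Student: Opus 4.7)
My plan is to square both sides so the equality becomes
$$\dashint_S \dashint_S |f(x)-f(y)|^2\,dy\,dx = 2\dashint_S |f-f_S|^2,$$
and then prove this via direct expansion in $L^2(S,\tfrac{dx}{|S|})$. The key is to exploit the Hilbert space structure, which already appeared in the proof of Proposition~\ref{pr:8}: $f-f_S$ is orthogonal to constants, which gives the Pythagorean identity
$$\dashint_S |f-f_S|^2 = \dashint_S |f|^2 - |f_S|^2.$$

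First, I would expand the integrand on the left. Writing $|f(x)-f(y)|^2 = |f(x)|^2 - 2\,\mathrm{Re}\,\bigl(f(x)\overline{f(y)}\bigr) + |f(y)|^2$ (or simply $f(x)^2-2f(x)f(y)+f(y)^2$ in the real-valued case), and applying Fubini on $S\times S$ with the normalized measure, the two pure square terms each contribute $\dashint_S |f|^2$, while the cross term contributes $-2|f_S|^2$, since
$$\dashint_S \dashint_S f(x)\overline{f(y)}\,dy\,dx = \left(\dashint_S f\right)\overline{\left(\dashint_S f\right)} = |f_S|^2.$$
Adding these up yields
$$\dashint_S \dashint_S |f(x)-f(y)|^2\,dy\,dx = 2\left(\dashint_S |f|^2 - |f_S|^2\right).$$

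Substituting the Pythagorean identity above then gives exactly $2\dashint_S |f-f_S|^2$, and taking square roots delivers the stated equality with constant $\sqrt{2}$. I do not anticipate any real obstacle: the proof is entirely a matter of expanding the square and recognizing the orthogonality of $f-f_S$ to constants in $L^2(S,\tfrac{dx}{|S|})$, which is precisely the feature of $p=2$ that the authors flagged in the remark preceding the proposition.
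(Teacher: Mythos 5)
Your proof is correct and is essentially the same argument as the paper's: the paper writes $f(x)-f(y)=(f(x)-f_S)-(f(y)-f_S)$ and applies orthogonality of the two pieces in $L^2(S\times S,\frac{dx\,dy}{|S|^2})$, which is just your expansion of the square with the cross term vanishing, repackaged. Both routes reduce to the same Fubini computation and the Pythagorean identity for $f-f_S$.
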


\begin{proof}
Observe that as elements of $L^2(S\times S,\frac{dx\,dy}{|S|^2})$, $f(x)-f_S$ is orthogonal to $f(y)-f_S$. Thus,
$$
\dashint_{S}\dashint_{S}\!|f(x)-f(y)|^2\,dy\,dx= \dashint_{S}\dashint_{S}\!|f(x)-f_S|^2\,dy\,dx+ \dashint_{S}\dashint_{S}\!|f(y)-f_S|^2\,dy\,dx
$$
and so 
$$
\left( \dashint_{S}\dashint_{S}\!|f(x)-f(y)|^2\,dy\,dx\right)^{1/2}=\sqrt{2}\left(\dashint_{S}\!|f-f_{S}|^2\right)^{1/2}.
$$
\end{proof}

In a different direction, it is sometimes easier to consider not the oscillation of a function from its mean, but its oscillation from a different constant. Again, this can be done at the loss of a constant.

\begin{proposition}
\label{pr:6}
For any shape $S\in \cS$,
$$
\inf_{c} \left(\dashint_{S}\!|f-c|^{p}\right)^{1/p} \leq \left(\dashint_{S}\!|f-f_{S}|^{p}\right)^{1/p}\leq 2 \inf_{c}\left(\dashint_{S}\!|f-c|^{p}\right)^{1/p},
$$
where the infimum is taken over all constants $c$.
\end{proposition}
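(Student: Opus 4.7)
The left-hand inequality is immediate: the quantity on the right-hand side is one particular value of $\bigl(\dashint_S |f-c|^p\bigr)^{1/p}$ (namely at $c = f_S$), so the infimum over all $c$ can only be smaller.

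For the right-hand inequality, the plan is to fix an arbitrary constant $c$ and bound $(\dashint_S|f-f_S|^p)^{1/p}$ by $2(\dashint_S|f-c|^p)^{1/p}$, then take the infimum over $c$. I would write $f - f_S = (f - c) - (f_S - c)$ and apply Minkowski's inequality on $L^p(S, \frac{dx}{|S|})$, noting that $f_S - c$ is a constant, to obtain
\[
\left(\dashint_{S}|f-f_S|^p\right)^{1/p} \leq \left(\dashint_{S}|f-c|^p\right)^{1/p} + |f_S - c|.
\]
The next step is to observe that $f_S - c = \dashint_{S}(f-c)$, so by Jensen's inequality (or the $p=1$ case of Proposition~\ref{pr:1} combined with Jensen)
\[
|f_S - c| = \left|\dashint_{S}(f-c)\right| \leq \dashint_{S}|f-c| \leq \left(\dashint_{S}|f-c|^p\right)^{1/p}.
\]
Substituting this into the Minkowski estimate and taking the infimum over $c$ yields the right-hand inequality.

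There is no real obstacle here; the argument is a standard two-line estimate. The only subtlety worth mentioning is that the infimum is actually attained when $p = 2$ precisely at $c = f_S$ (by the Hilbert space projection argument already used in the proof of Proposition~\ref{pr:8}), so in that case the constant $2$ could in principle be improved; but for general $p$ the constant $2$ is the expected one from the triangle inequality, and proving or disproving its sharpness is not required by the statement.
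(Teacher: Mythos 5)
Your proof is correct and follows essentially the same route as the paper: the left inequality by taking $c = f_S$, and the right by Minkowski's inequality on $L^p(S,\frac{dx}{|S|})$ together with Jensen's inequality to control $|f_S - c|$ by $\left(\dashint_S |f-c|^p\right)^{1/p}$. Your closing remark about the infimum being attained at $c=f_S$ when $p=2$ is also consistent with what the paper later records in Proposition~\ref{pr:12}.
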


\begin{proof} 
The first inequality is trivial. To show the second inequality, fix a shape $S\in\cS$. By Minkowski's inequality on $\Lp(S,\frac{d{x}}{|S|})$ and Jensen's inequality,
$$
\left(\dashint_{S}\!|f-f_{S}|^{p}\right)^{1/p}\leq \left(\dashint_{S}\!|f-c|^{p}\right)^{1/p}+\left(|f_{S}-c|^p\right)^{1/p}\leq 2\left(\dashint_{S}\!|f-c|^{p}\right)^{1/p}.
$$
\end{proof}

As with Proposition \ref{pr:9}, simple examples show that the constants are sharp for $p=1$. 

\begin{example}
Let $S$ be a shape on $\Omega$ and consider the function $f=\chi_{E}-\chi_{E^c}$ as in Example~\ref{ex-simple}, with $|E| = |S|/2$, $E^c = S\setminus E$.  Then
$$
\dashint_{S}\!|f-f_S|=1.
$$
and for any constant $c$  we have that
$$
\dashint_{S}\!|f-c|=\frac{|1-c||E|+|1+c||E^c|}{|S|} =\frac{|1-c|+|1+c|}{2} \geq{1}
$$
with equality if $c\in[-1,1]$.  Thus
$$
\inf_{c}\dashint_{S}\!|f-c| =\dashint_{S}\!|f-f_{S}|,
$$
showing the left-hand side constant $1$ in Proposition \ref{pr:6} is sharp when $p=1$. 
\end{example}

\begin{example}
Consider, now, the function $f=\chi_{E}$ where ${E}$ is a measurable subset of $S$ such that $|E|=\alpha|S|$ for some $0<\alpha<\frac{1}{2}$. Then, $f_S=\alpha$ and 
$$
\dashint_{S}\!|f-f_S|=\frac{(1-\alpha)\alpha|S|}{|S|}+\frac{(|S|-\alpha|S|)\alpha}{|S|}=2\alpha(1-\alpha).
$$
For any constant $c$, we have that
$$
\dashint_{S}\!|f-c|=\frac{\big|1-|c|\big||E|}{|S|}+\frac{|c|(|S|-|E|)}{|S|}=\alpha\big|1-|c|\big|+|c|(1-\alpha).
$$
The right-hand side is at least $\alpha(1-|c|)+|c|(1-\alpha)=\alpha+|c|(1-2\alpha)$, which is at least $\alpha$ with equality for $c=0$, and so
$$
\inf_{c}\dashint_{S}\!|f-c|=\alpha.
$$
As $\alpha\rightarrow{0^+}$, 
$$
\frac{2\alpha(1-\alpha)}{\alpha}\rightarrow{2},
$$
showing the right-hand side constant 2 in Proposition \ref{pr:6} is sharp when $p=1$. 
\end{example}

When $p=1$, it turns out that we know for which constant the infimum in Proposition \ref{pr:6} is achieved. 

\begin{proposition}
Let $f$ be real-valued. For any shape $S\in\cS$,
$$
\inf_{c} \dashint_{S}\!|f-c|=\dashint_{S}\!|f-m|,
$$
where $m$ is a median of $f$ on $S$: that is, a (possibly non-unique) number such that $|\{x\in{S}:f(x)>m\}|\leq\frac{1}{2}|S|$ and $|\{x\in{S}:f(x)<m\}|\leq\frac{1}{2}|S|$.
\end{proposition}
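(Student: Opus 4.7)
The plan is to show that the function $\phi(c) := \dashint_S |f-c|$ is minimized at any median $m$ of $f$ on $S$. The case $c = m$ is trivial, and the case $c < m$ reduces to $c > m$ by applying the argument to $-f$ (noting that $-m$ is a median of $-f$), so it suffices to prove $\phi(c) \geq \phi(m)$ whenever $c > m$.

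The main tool is the pointwise identity
$$
|f(x) - c| - |f(x) - m| = -\int_m^c \mathrm{sgn}(f(x) - t)\,dt,
$$
valid for every $x \in S$. This is just the fundamental theorem of calculus applied to the Lipschitz function $t \mapsto |f(x)-t|$, whose derivative equals $-\mathrm{sgn}(f(x)-t)$ at every $t \neq f(x)$. Integrating over $S$ and applying Fubini (justified since the integrand is bounded by $1$ on the set $S \times [m,c]$ of finite measure) yields
$$
|S|\bigl(\phi(c) - \phi(m)\bigr) = -\int_m^c \bigl(|\{x \in S : f(x) > t\}| - |\{x \in S : f(x) < t\}|\bigr)\,dt.
$$

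The median property then forces the right-hand side to be nonnegative. For any $t$ with $m < t \leq c$, monotonicity gives $|\{f > t\}| \leq |\{f > m\}| \leq |S|/2$, while the inclusion $\{f \leq m\} \subset \{f < t\}$ combined with $|\{f \leq m\}| = |S| - |\{f > m\}| \geq |S|/2$ gives $|\{f < t\}| \geq |S|/2$. Hence the inner difference is nonpositive on $(m,c]$, and after the outer minus sign the whole expression is nonnegative, proving $\phi(c) \geq \phi(m)$.

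The main obstacle is purely notational: one must track signs carefully and verify that the set of $t$ for which $\mathrm{sgn}(f(x)-t)$ fails to be defined on a positive-measure set of $x$ is negligible. Since this bad set is countable (at most one $t$ per positive-measure level set of $f$), it contributes nothing to the outer integral, and the argument reduces cleanly to the Fubini computation above together with the defining property of the median.
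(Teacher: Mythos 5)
Your proof is correct, but it takes a genuinely different route from the paper's. The paper argues directly: it splits $\int_S |f-m|$ over the sets $\{f\geq m\}$ and $\{f<m\}$, inserts $\pm c$, and compares term by term with $\int_S|f-c|$, using the two median inequalities to control the leftover term $(m-c)\bigl[|\{f<m\}|-|\{f\geq m\}|\bigr]$; like you, it treats only $c<m$ and reduces $c>m$ to that case via $-f$. You instead use the fundamental theorem of calculus for the Lipschitz function $t\mapsto|f(x)-t|$ together with Fubini to obtain the exact formula
$$
\int_S|f-c|-\int_S|f-m| = -\int_m^c \Bigl(|\{x\in S: f(x)>t\}|-|\{x\in S: f(x)<t\}|\Bigr)\,dt,
$$
and then read off the sign of the integrand from the median property. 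Both arguments rest on the same two inequalities defining a median, so neither is deeper than the other; what your version buys is transparency and a little extra information: the displayed identity shows that $c\mapsto\dashint_S|f-c|$ is convex with the difference quotient changing sign exactly at the medians, so the infimum is attained precisely on the (possibly degenerate) interval of medians, a fact the paper's term-by-term estimate does not exhibit. The paper's version is more elementary in that it avoids Fubini and joint measurability of $(x,t)\mapsto\mathrm{sgn}(f(x)-t)$ (both of which you correctly justify, since $|S|<\infty$ and $f\in L^1(S)$ by the standing assumption of that section). Your closing worry about the set of $t$ where $\mathrm{sgn}(f(x)-t)$ "fails to be defined" is harmless but slightly misplaced: $\mathrm{sgn}(0)=0$ is perfectly well defined, and the only genuine issue is that the derivative of $t\mapsto|f(x)-t|$ fails to exist at the single point $t=f(x)$ for each fixed $x$, which does not affect the absolutely continuous FTC.
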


Note that the definition of a median makes sense for real-valued measurable functions. A proof of this proposition can be found in the appendix of \cite{css}, along with the fact that such functions always have a median on a measurable set of positive and finite measure (in particular, on a shape). Also, note that from the definition of a median, it follows that 
$$
|\{x\in{S}:f(x)\geq{m} \}|=|S|-|\{x\in{S}:f(x)>{m} \}|\geq|S|-\frac{1}{2}|S|=\frac{1}{2}|S|
$$ 
and, likewise, 
$$
|\{x\in{S}:f(x)\leq{m} \}|\geq\frac{1}{2}|S|.
$$

\begin{proof}
Fix a shape $S\in\cS$ and a median $m$ of $f$ on $S$. For any constant $c$,
\[
\begin{split}
\int_{S}\!|f(x)-m|\,dx&=\int_{\{x\in{S}:f(x)\geq{m} \}}\!\big(f(x)-m\big)\,dx+\int_{\{x\in{S}:f(x)<{m} \}}\!\big(m-f(x)\big)\,dx\\ &=\int_{\{x\in{S}:f(x)\geq{m} \}}\!\big(f(x)-c\big)\,dx+(c-m)|\{x\in{S}:f(x)\geq{m} \}| \\ 
&\qquad+\int_{\{x\in{S}:f(x)<{m} \}}\!\big(c-f(x)\big)\,dx+(m-c)|\{x\in{S}:f(x)<{m} \}|.
\end{split}
\]

Assuming that $m>c$, we have that 
$$
\int_{\{x\in{S}:f(x)\geq{m} \}}\!\big(f(x)-c\big)\,dx\leq \int_{\{x\in{S}:f(x)\geq{c} \}}\!\big(f(x)-c\big)\,dx
$$
and
$$
\int_{\{x\in{S}:c\leq f(x)<{m} \}}\!\big(c-f(x)\big)\,dx\leq{0},
$$
and so we can write
\[
\begin{split}
\int_{S}\!|f(x)-m|\,dx&\leq \int_{\{x\in{S}:f(x)\geq{c} \}}\!\big(f(x)-c\big)\,dx+(c-m)|\{x\in{S}:f(x)\geq{m} \}| \\ 
&\qquad+\int_{\{x\in{S}:f(x)<{c} \}}\!\big(c-f(x)\big)\,dx+(m-c)|\{x\in{S}:f(x)<{m} \}|\\&=\int_{S}\!|f(x)-c|\,dx\\&\qquad+(m-c)\big[|\{x\in{S}:f(x)<{m} \}|-|\{x\in{S}:f(x)\geq{m} \}|\big]\\&\leq \int_{S}\!|f(x)-c|\,dx+(m-c)\left[\frac{1}{2}|S|-\frac{1}{2}|S|\right]\\&=\int_{S}\!|f(x)-c|\,dx.
\end{split}
\]

In the case where $m<c$, we may apply the previous calculation to $-f$ and use the fact that $-m$ is a median of $-f$:
$$
\int_{S}\!|f(x)-m|\,dx=\int_{S}\!|-f(x)-(-m)|\,dx\leq \int_{S}\!|-f(x)-(-c)|\,dx=\int_{S}\!|f(x)-c|\,dx.
$$
\end{proof}

For $p=2$, we are able to do a few things at once. We are very simply able to obtain an equality that automatically determines the sharp constants for Proposition \ref{pr:6} and the constant $c$ for which the infimum is achieved. 

\begin{proposition}
\label{pr:12}
For any shape $S\in\cS$,
$$
\inf_{c} \left(\dashint_{S}\!|f-c|^{2}\right)^{1/2}=\left(\dashint_{S}\!|f-f_S|^{2}\right)^{1/2},
$$
where the infimum is taken over all constants $c$.
\end{proposition}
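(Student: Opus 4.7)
The plan is to exploit the Hilbert space structure of $L^2(S,\tfrac{dx}{|S|})$, exactly as was done in the proof of Proposition \ref{pr:8} and in the identity preceding this proposition. The one-line idea: constants form a closed subspace of this Hilbert space, and $f_S$ is the orthogonal projection of $f$ onto that subspace, so it is the unique minimizer of the distance from $f$ to the constants.

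Concretely, I would fix a shape $S\in\cS$ and an arbitrary constant $c\in\R$, then write
$$
f - c = (f - f_S) + (f_S - c).
$$
The function $f-f_S$ has mean zero on $S$ and hence is orthogonal in $L^2(S,\tfrac{dx}{|S|})$ to the constant function $f_S - c$. Applying the Pythagorean identity yields
$$
\dashint_{S}|f-c|^2 = \dashint_{S}|f-f_S|^2 + |f_S - c|^2.
$$
Since $|f_S-c|^2 \geq 0$ with equality if and only if $c = f_S$, taking the infimum over all $c$ gives
$$
\inf_{c}\dashint_{S}|f-c|^2 = \dashint_{S}|f-f_S|^2,
$$
and the result follows upon taking square roots.

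There is really no obstacle here; the only thing to be careful about is the orthogonality justification, which amounts to the identity $\dashint_S (f-f_S)\cdot(f_S-c) = (f_S-c)\dashint_S (f-f_S) = 0$. Note that, as a bonus, the Pythagorean decomposition identifies $f_S$ as the unique minimizer (up to almost everywhere equality), and at the same time recovers the sharp constant $1$ on both sides of Proposition \ref{pr:6} in the case $p=2$, as remarked in the statement preceding the proposition.
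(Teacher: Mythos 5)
Your proof is correct and is essentially identical to the paper's: both decompose $f-c=(f-f_S)+(f_S-c)$, invoke the orthogonality of $f-f_S$ to constants in $L^2(S,\tfrac{dx}{|S|})$, and apply the Pythagorean identity to identify $c=f_S$ as the minimizer. No gaps.
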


\begin{proof}
Fix a shape $S\in\cS$ and a constant $c$. As previously observed, $f-f_S$ is orthogonal to any constant in the sense of $L^2(S,\frac{dx}{|S|})$; in particular, $f-f_S$ is orthogonal to $f_S-c$. Thus, 
$$
\dashint_{S}\!|f-c|^{2}=\dashint_{S}\!|f-f_S|^{2}+\dashint_{S}\!|f_S-c|^{2}\geq \dashint_{S}\!|f-f_S|^{2}
$$
with the minimum achieved when $c=f_S$.
\end{proof}

The following proposition shows that the action of H\"{o}lder continuous maps preserves the bound on the $p$-mean oscillation, up to a constant.

\begin{proposition}
\label{pr:2}
Let $F:\R\rightarrow\R$ be $\alpha$-H\"{o}lder continuous for $0<\alpha\leq{1}$ with H\"{o}lder coefficient $L$. Fix any shape $S\in\cS$ and
suppose $f \in \Lone(S)$ is real-valued.  Then, for $1 \leq p < \infty$,
\begin{equation}
\label{eqn-Holderp}
\left(\dashint_{S}\!\big|F\circ f-(F\circ f)_{S}\big|^p\right)^{\frac{1}{p}}\leq 2 L\left(\dashint_{S}\!|f-f_S|^{p}\right)^{\frac{\alpha}{p}}.
\end{equation}
When $p=2$, 
\begin{equation}
\label{eqn-Holder2}
\left(\dashint_{S}\!\big|F\circ f-(F\circ f)_{S}\big|^2\right)^{\frac{1}{2}}\leq  L\left(\dashint_{S}\!|f-f_S|^{2}\right)^{\frac{\alpha}{2}}.
\end{equation}
\end{proposition}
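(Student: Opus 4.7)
The plan is to reduce both inequalities to a direct pointwise application of the H\"{o}lder condition against the constant $F(f_S)$, and then use Jensen's inequality to reintroduce the $p$-mean oscillation of $f$ itself on the right-hand side.

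First I would handle the general case (\ref{eqn-Holderp}). Applying Proposition~\ref{pr:6} to the function $F\circ f$ gives
$$
\left(\dashint_{S}\!\big|F\circ f-(F\circ f)_{S}\big|^p\right)^{\frac{1}{p}}\leq 2\inf_{c}\left(\dashint_{S}\!|F\circ f-c|^{p}\right)^{\frac{1}{p}},
$$
and I would make the specific choice $c=F(f_S)$, which is legitimate because $f$ is real-valued and $f_S\in\R$. The H\"{o}lder condition then yields the pointwise bound $|F(f(x))-F(f_S)|\leq L|f(x)-f_S|^{\alpha}$, so that
$$
\dashint_{S}\!|F\circ f-F(f_S)|^{p}\leq L^{p}\dashint_{S}\!|f-f_S|^{\alpha p}.
$$
To convert $\dashint_{S}|f-f_S|^{\alpha p}$ into a power of $\dashint_S |f-f_S|^p$, I would invoke Jensen's inequality applied to the concave function $t\mapsto t^{\alpha}$ on $[0,\infty)$ (valid since $0<\alpha\le 1$), giving
$$
\dashint_{S}\!|f-f_S|^{\alpha p}=\dashint_{S}\!\big(|f-f_S|^{p}\big)^{\alpha}\leq\left(\dashint_{S}\!|f-f_S|^{p}\right)^{\alpha}.
$$
Chaining these three estimates together produces exactly (\ref{eqn-Holderp}).

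For the sharpened case $p=2$ in (\ref{eqn-Holder2}), I would repeat the same argument but replace the use of Proposition~\ref{pr:6} with Proposition~\ref{pr:12}, which states that the infimum $\inf_c \dashint_S |g-c|^2$ is attained at $c=g_S$ with value equal to $\dashint_S |g-g_S|^2$. Applied to $g = F\circ f$, this gives
$$
\dashint_{S}\!\big|F\circ f-(F\circ f)_S\big|^{2}\leq \dashint_{S}\!|F\circ f - F(f_S)|^{2},
$$
so the factor of $2$ produced by Minkowski in the proof of Proposition~\ref{pr:6} disappears. Applying the H\"{o}lder condition and Jensen's inequality exactly as above then yields (\ref{eqn-Holder2}).

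There is no real obstacle here; the proof is essentially a two-line reduction once one realizes that comparing $(F\circ f)_S$ to $F(f_S)$ (rather than trying to compare them directly) is the right move. The only subtle point worth flagging is the direction of Jensen's inequality: because $\alpha\le 1$, the function $t\mapsto t^{\alpha}$ is concave, which is what makes the averaging step go through in the correct direction. The argument uses only that $f\in L^1(S)$ (together with $f$ real-valued, so that $F\circ f$ makes sense and $F(f_S)$ is a genuine real constant), so the hypotheses match those stated.
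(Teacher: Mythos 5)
Your proof is correct and follows essentially the same route as the paper: bound the oscillation of $F\circ f$ via Proposition~\ref{pr:6} with the choice $c=F(f_S)$, apply the H\"{o}lder condition pointwise, and finish with Jensen's inequality for the concave function $t\mapsto t^{\alpha}$, replacing Proposition~\ref{pr:6} by Proposition~\ref{pr:12} when $p=2$ to drop the factor of $2$. Nothing further is needed.
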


\begin{proof}
Fix a shape  $S\in\cS$. By Proposition \ref{pr:6} and Jensen's inequality, we have that
\[
\begin{split}
\dashint_{S}\!\big|F\circ f-(F\circ f)_{S}\big|^p&\leq 2^p\dashint_{S}\!\big|F(f(x))-F( f_S)\big|^p dx
\\ &\leq  2\Lp \dashint_{S}\!|f(x)-f_S|^{\alpha p}\,dx\\&\leq 2^p \Lp\left(\dashint_{S}\!|f-f_S|^{p}\right)^{\alpha}.
\end{split}
\]
When $p=2$, Proposition~\ref{pr:12} shows that the factor of $2^p$ in the first inequality can be dropped.
\end{proof}

As has been pointed out in \cite{css}, if one uses the equivalent norm defined by Proposition \ref{pr:6}, the result of Proposition \ref{pr:2} holds with constant $L$ for any $p \geq 1$,
since the factor of $2$ comes from $(F\circ f)_{S}\neq F(f_{S})$.

The following example demonstrates that the constants are sharp when $p=1$ and $p = 2$.

\begin{example} 
\label{ex-absval}
Consider the function $F(x)=|x|$, so that $\alpha = 1=L$,
and fix a shape $S$ in $\Omega$. Taking the function ${f}=\chi_{E_1}-\chi_{E_3}$, as in Example \ref{ex:3}, where $S$ is a disjoint union $E_1\cup E_2\cup E_3$ and $|E_1|=|E_3|=\beta|S|$ for some $0<\beta<\frac{1}{2}$, we have
 $|f|_S=2\beta$ and so 
$$
\dashint_{S}\!\big||f|-|f|_{S}\big|=4(1-2\beta)\beta.
$$
Since $\dashint_{S}\!|f-f_{S}| =  2\beta$
and
$
\frac{4(1-2\beta)\beta}{2\beta}\rightarrow{2}
$
as $\beta\rightarrow{0^+}$,  the constant $2$ is sharp for $p = 1$.

For $p = 2$, when $f\geq{0}$ we have $F(f) = f$ and $F(f)_{S}= F(f_S)$, so equality holds in \eqref{eqn-Holder2}.
\end{example}

While we have shown that the shapewise inequalities \eqref{eqn-Holderp}, for $p=1$, and \eqref{eqn-Holder2} are sharp for $F(x)=|x|$, in the next section it will be shown that better constants can be obtained for norm inequalities. 

Now we address how the $\BMO{{\cS}}{p}(\Omega)$ spaces relate for different bases.

\begin{proposition}
\label{pr:5}
For any shape $S\in\cS$, if $\tilde{S}$ is another shape (from possibly another basis) such that $\tilde{S}\subset{S}$ and $|\tilde{S}|\geq c|{S}|$ for some constant $c$, then
$$
\left(\dashint_{\tilde{S}}\!|f-f_{\tilde{S}}|^p\right)^{\frac{1}{p}}\leq 2c^{-1/p}\left(\dashint_{{S}}\!|f-f_{{S}}|^p\right)^{\frac{1}{p}}.
$$
\end{proposition}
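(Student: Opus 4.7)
The plan is to combine Proposition \ref{pr:6} (the near-optimality of $f_S$ as a constant approximation in $L^p$-mean on any shape) with the simple observation that integration over a subset can be controlled, up to a multiplicative loss, by integration over a larger set whose measure is comparable.

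First I would apply Proposition \ref{pr:6} to the shape $\tilde{S}$, which gives
\[
\left(\dashint_{\tilde{S}}|f-f_{\tilde{S}}|^p\right)^{1/p}\leq 2\inf_{a}\left(\dashint_{\tilde{S}}|f-a|^p\right)^{1/p},
\]
where I have written the infimum variable as $a$ to avoid a clash with the comparability constant $c$ in the statement. Choosing the particular constant $a=f_S$, I get
\[
\left(\dashint_{\tilde{S}}|f-f_{\tilde{S}}|^p\right)^{1/p}\leq 2\left(\dashint_{\tilde{S}}|f-f_S|^p\right)^{1/p}.
\]

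Next I would use the hypotheses $\tilde{S}\subset S$ and $|\tilde{S}|\geq c|S|$ to estimate the right-hand side. Since the integrand is nonnegative and $\tilde{S}\subset S$,
\[
\int_{\tilde{S}}|f-f_S|^p\leq\int_{S}|f-f_S|^p,
\]
and then dividing by $|\tilde{S}|\geq c|S|$ yields
\[
\dashint_{\tilde{S}}|f-f_S|^p\leq \frac{1}{c}\dashint_{S}|f-f_S|^p.
\]
Taking $p$-th roots and chaining this with the previous display gives exactly the desired inequality.

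There is essentially no obstacle here: the argument is a direct concatenation of two ingredients already in hand, with the only mild subtlety being the notational collision between the generic constant in Proposition \ref{pr:6} and the comparability constant $c$ in the statement, which is easily avoided by renaming. Note also that the bound on $|\tilde{S}|$ is only used from below, while the containment $\tilde{S}\subset S$ is used to majorize the integral of $|f-f_S|^p$; neither hypothesis can be weakened in this argument.
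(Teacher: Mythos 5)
Your proposal is correct and follows exactly the paper's argument: apply Proposition~\ref{pr:6} on $\tilde{S}$ with the competitor constant $f_S$, then enlarge the domain of integration from $\tilde{S}$ to $S$ and use $|\tilde{S}|\geq c|S|$ to control the normalization. No issues.
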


\begin{proof}
From Proposition \ref{pr:6},
$$
\dashint_{\tilde{S}}\!|f-f_{\tilde{S}}|^p\leq 2^p\dashint_{\tilde{S}}\!|f-f_{S}|^p\leq 2^pc^{-1}\dashint_{S}\!|f-f_{S}|^p.
$$
\end{proof}

An immediate consequence of this is that $\BMO{\cS}{p}(\Omega)\subset\BMO{\tilde{\cS}}{p}(\Omega)$ if $\tilde{\cS}\trianglelefteq\cS$. Moreover, if $\cS\approx\tilde{\cS}$ then $\BMO{\cS}{p}(\Omega)\cong \BMO{\tilde{\cS}}{p}(\Omega)$. In particular, it follows that $\BMO{\cB}{p}(\Omega)\cong \BMO{\cQ}{p}(\Omega)$. Since $\cQ\subset\cR$, it is automatic without passing through Proposition \ref{pr:5} that $\BMO{\cR}{p}(\Omega)\subset \BMO{}{p}(\Omega)$ with $\norm{f}{\BMO{}{p}}\leq\norm{f}{\BMO{\cR}{p}}$. The reverse inclusion is false. The following example of a function in $\BMO{}{}(\Omega)$ that is not in $\BMO{\cR}{}(\Omega)$ is taken from \cite{ko2}, where the calculations proving the claim can be found: 

\begin{example}
\label{ex:1}
Consider $\Omega=(0,1)\times(0,1)\subset\R^2$. The function 
$$
f=\sum_{k=1}^{\infty}\chi_{\left(0,{2^{-k+1}}\right)\times\left(0,\frac{1}{k}\right) }
$$
belongs to $\BMO{}{}(\Omega) \setminus \BMO{\cR}{}(\Omega)$.
\end{example}

\section{{\bf Rearrangements and the absolute value}}

Consider two measure spaces $(M,\mu)$ and $(N,\nu)$ such that $\mu(M)=\nu(N)$.
\begin{definition}
\label{def-equimeas}
We say that measurable functions $f:M\rightarrow\R$ and $g:N\rightarrow\R$ are equimeasurable if for all $s\in\R$ the quantities $\mu_f(s)=\mu\big(\{x \in M:f(x)>s \}\big)$ and $\nu_g(s)=\nu\big(\{y\in N:g(y)>s \}\big)$ coincide. 
\end{definition}

It is important to note that this is not the standard definition of equimeasurability. Typically (see, for example, \cite{bs}) equimeasurability means $\mu_{|f|}(s)=\mu_{|g|}(s)$ for all $s\geq{0}$; however, for our purposes, it will be useful to distinguish between two functions being equimeasurable and the absolute value of two functions being equimeasurable. That said, it is true that

\begin{lemma}
\label{lm:1}
Let $f$ and $g$ be measurable functions such that $\mu_f(s)=\nu_g(s)<\infty$ for all $s$. Then, $\mu_{|f|}(s)=\nu_{|g|}(s)$ for all $s$. 
\end{lemma}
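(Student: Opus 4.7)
The plan is to establish the conclusion by a direct set-theoretic decomposition of the superlevel sets of $|f|$, reducing everything to the given pointwise identity $\mu_f=\nu_g$ together with the standing hypothesis $\mu(M)=\nu(N)$ from the preamble of this section. For $s<0$ the statement is trivial, since $\{|f|>s\}=M$ and $\{|g|>s\}=N$, and so $\mu_{|f|}(s)=\mu(M)=\nu(N)=\nu_{|g|}(s)$.

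For $s\geq 0$ I would split $\{|f|>s\}=\{f>s\}\sqcup\{f<-s\}$ and likewise for $g$. The first pieces contribute $\mu_f(s)=\nu_g(s)$ by hypothesis, so it remains to prove $\mu(\{f<-s\})=\nu(\{g<-s\})$. Writing $\{f<-s\}=M\setminus\{f\geq -s\}$ and using the fact that $\{f\geq -s\}=\bigcap_{n\geq 1}\{f>-s-1/n\}$ is a decreasing intersection of sets of finite measure, continuity of measure from above yields
\[
\mu(\{f\geq -s\})=\lim_{n\to\infty}\mu_f(-s-1/n),
\]
and the same computation on $(N,\nu)$ shows that this left limit of $\mu_f$ at $-s$ coincides with the corresponding left limit of $\nu_g$; hence $\mu(\{f\geq -s\})=\nu(\{g\geq -s\})$. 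Subtracting from the equality $\mu(M)=\nu(N)$ gives $\mu(\{f<-s\})=\nu(\{g<-s\})$, and combining with the equality of the first pieces completes the proof.

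The only delicate point I anticipate is the case $\mu(M)=\nu(N)=\infty$, where the subtraction must be interpreted as $\infty-\text{finite}=\infty$. This is legitimate because the hypothesis $\mu_f<\infty$ forces $\mu(\{f\geq -s\})\leq\mu_f(-s-1)<\infty$, so that both $\mu(\{f<-s\})$ and $\nu(\{g<-s\})$ are simply infinite in that situation and the equality holds in the extended sense. Beyond this mild bookkeeping, the argument is a purely formal manipulation of the distribution function and requires no additional input.
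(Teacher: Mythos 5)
Your proof is correct and follows essentially the same route as the paper's: both reduce to showing $\mu(\{f<-s\})=\nu(\{g<-s\})$ via the left limit $\lim_{n}\mu_f(-s-1/n)$ together with $\mu(M)=\nu(N)$ and the finiteness of $\mu_f$, the only (cosmetic) difference being that you apply continuity of measure from above to $\{f\geq -s\}$ whereas the paper applies continuity from below to $\{f<-s\}$ written as an increasing union. Your handling of the case $\mu(M)=\infty$ matches the paper's stated convention, so no gaps remain.
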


\begin{proof}
Fix $s\in\R$. Writing
$$
\{x \in M:f(x)<-s \}=\bigcup_{n\in\N} \left\{x\in M:f(x)\leq-s-\frac{1}{n}\right\},
$$
we have that 
$$
\mu\big(\{x \in M:f(x)<-s \}\big)=\lim_{n\rightarrow\infty} \mu(M)-\mu_{f}\left(-s-\frac{1}{n} \right).
$$
Here we use the convention that infinity minus a finite number is infinity and use the fact that $\mu_{f}<\infty$. Since $\mu_f=\nu_g$, by assumption, it follows that 
$$
\mu\big(\{x \in M:f(x)<-s \}\big)=\nu\big(\{x \in N:g(x)<-s \}\big).
$$
If $s\geq{0}$, then
\[
\begin{split}
\mu\big(\{x \in M:|f(x)|>s \}\big) &= \mu\big(\{x \in M:f(x)>s \}\big)+\mu\big(\{x \in M:f(x)<-s \}\big)\\&=\nu\big(\{y \in N:g(y)>s \}\big)+\nu\big(\{y \in N:g(y)<-s \}\big)\\&=\nu\big(\{y \in N:|g(x)|>s \}\big).
\end{split}
\]
If $s<0$, then 
$$
\mu\big(\{x \in M:|f(x)|>s \}\big)=\mu(M)=\nu(N)=\nu\big(\{y \in N:|g(x)|>s \}\big).
$$
\end{proof}

A useful tool is the following lemma. It is a consequence of Cavalieri's principle, also called the layer cake representation, which provides a way of expressing the integral of $\varphi(|f|)$ for a suitable transformation $\varphi$ in terms of a weighted integral of $\mu_{|f|}$. The simplest incarnation of this principal states that for any measurable set $A$,
$$
\int_{A}\!|f|^p=\int_{0}^{\infty}\!p\alpha^{p-1}|\{x\in A:|f(x)|>\alpha \}|\,d\alpha,
$$
where $0<p<\infty$. A more general statement can be found in \cite{ll}, Theorem 1.13 and its remarks.

\begin{lemma}
\label{lm:3}
Let $M\subset\Rm$, $N\subset\Rn$ be Lebesgue measurable sets of equal measure, and $f:M \ra \R$ and $g: N \ra \R$ be measurable functions such that $|f|$ and $|g|$ are equimeasurable. Then, for $0<p<\infty$,
$$
\int_{M}\!|f|^p =\int_{N}\!|g|^p \qquad \text{and} \qquad \esssup_M |f| = \esssup_N |g|.
$$

Furthermore, under the hypothesis of Lemma \ref{lm:1}, for any constant $c$,
$$
\int_{M}\!\big||f|-c\big|=\int_{N}\!\big||g|-c\big|.
$$
\end{lemma}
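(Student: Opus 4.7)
The plan is to treat the three conclusions in two stages. The identities for $\int_M|f|^p$ and $\esssup_M|f|$ depend only on the equimeasurability of $|f|$ and $|g|$, while the integral identity involving $\bigl||f|-c\bigr|$ will use the full hypothesis of Lemma~\ref{lm:1}. For the first identity I would apply the layer-cake formula recalled just before the lemma,
\[
\int_M |f|^p = \int_0^\infty p\alpha^{p-1}\bigl|\{x\in M: |f(x)|>\alpha\}\bigr|\,d\alpha,
\]
together with its analogue on $N$; since $|f|$ and $|g|$ are equimeasurable, the two right-hand sides coincide. For the essential supremum I would invoke the characterization
\[
\esssup_M |f| = \inf\bigl\{\alpha\geq 0: \bigl|\{x\in M:|f(x)|>\alpha\}\bigr|=0\bigr\},
\]
which again depends only on the distribution function of $|f|$, and conclude $\esssup_M |f|=\esssup_N |g|$ at once.

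For the third identity my plan is to pass to pushforward measures on $\R$:
\[
\tilde\mu(B):=\bigl|f^{-1}(B)\bigr|,\qquad \tilde\nu(B):=\bigl|g^{-1}(B)\bigr|, \qquad B\subset\R\ \text{Borel}.
\]
Under the hypothesis of Lemma~\ref{lm:1}, $\tilde\mu$ and $\tilde\nu$ agree on the $\pi$-system $\mathcal{P}=\{(s,\infty):s\in\R\}$, with common finite value $\mu_f(s)=\nu_g(s)<\infty$, and they are $\sigma$-finite on $\R$ via the exhaustion $\R=\bigcup_n(-n,\infty)$. The standard $\pi$--$\lambda$ uniqueness theorem then forces $\tilde\mu=\tilde\nu$ on the Borel $\sigma$-algebra generated by $\mathcal{P}$, which is the full Borel $\sigma$-algebra on $\R$. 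Consequently, for every nonnegative Borel function $\Phi:\R\to[0,\infty)$,
\[
\int_M \Phi\circ f\,dx=\int_\R\Phi\,d\tilde\mu=\int_\R\Phi\,d\tilde\nu=\int_N \Phi\circ g\,dy,
\]
and the choice $\Phi(x)=\bigl||x|-c\bigr|$ yields the asserted identity.

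The only step needing any real care is the $\sigma$-finiteness/uniqueness argument in the third part; without the finiteness built into the hypothesis of Lemma~\ref{lm:1}, one cannot pass from agreement on half-lines to agreement on all Borel sets, which is precisely why the third conclusion is separated from the first two and requires the strictly stronger assumption. Everything else is standard bookkeeping with the layer-cake formula.
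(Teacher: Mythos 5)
Your proof is correct. Note that the paper does not actually write out a proof of Lemma~\ref{lm:3}: it simply declares the lemma ``a consequence of Cavalieri's principle'' and points to Theorem~1.13 of \cite{ll}. Your first two identities follow exactly the route the paper indicates (layer cake for the $L^p$ integral, the distribution-function characterization of the essential supremum), so there is nothing to compare there. For the third identity you go beyond the paper's hint: the function $t\mapsto |t-c|$ is not monotone on $[0,\infty)$, so the layer-cake formula does not apply to it directly, and your pushforward/$\pi$--$\lambda$ argument is a clean way to close this gap. In particular you correctly identify that the finiteness $\mu_f(s)=\nu_g(s)<\infty$ built into the hypothesis of Lemma~\ref{lm:1} is what makes the half-lines $(-n,\infty)$ an exhaustion by sets of finite measure, so that agreement on the $\pi$-system of half-lines upgrades to agreement of the pushforward measures on all Borel sets; without that finiteness one cannot subtract to get the measures of bounded intervals. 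Your argument in fact yields the stronger conclusion $\int_M\Phi\circ f=\int_N\Phi\circ g$ for every nonnegative Borel $\Phi$, of which the stated identity is the special case $\Phi(x)=\bigl||x|-c\bigr|$.
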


Moving back to the setting of this paper, for this section we assume that $f$ is a measurable function on $\Omega$ that satisfies the condition 
\begin{equation}
\label{condition}
|\{x\in \Omega:|f(x)|>s \}|\rightarrow{0} \qquad\text{as} \qquad s\rightarrow\infty.
\end{equation}
This guarantees that the rearrangements defined below are finite on their domains (see \cite{sw}, V.3). 

\begin{definition}
Let $I_\Omega=(0,|\Omega|)$.
The decreasing rearrangement of $f$ is the function 
$$
f^\ast(t)=\inf\{s\geq{0}:|\{x\in \Omega:|f(x)|>s \}|\leq{t}\}, \quad t\in I_\Omega.
$$
\end{definition}

This rearrangement is studied in the theory of interpolation and rearrangement-invariant function spaces. In particular, it can be used to define the Lorentz spaces, $L^{p,q}$, which are a refinement of the scale of Lebesgue spaces and can be used to strengthen certain inequalities such as those of Hardy-Littlewood-Sobolev and Hausdorff-Young. For standard references on these topics, see \cite{bs} or \cite{sw}. 

A related rearrangement is the following.

\begin{definition}
The signed decreasing rearrangement of $f$ is defined as 
$$
f^\circ(t)=\inf\{s\in\R:|\{x\in \Omega:f(x)>s \}|\leq{t} \},\quad t\in I_\Omega.
$$
\end{definition}
Clearly, $f^\circ$ coincides with $f^\ast$ when $f\geq{0}$ and, more generally, $|f|^\circ=f^\ast$. Further information on this rearrangement can be found in \cite{cp,ko2}.
  
Here we collect some of the basic properties of these rearrangements, the proofs for which are adapted from \cite{sw}.
\begin{lemma}
\label{lm:4}
Let $f:\Omega\rightarrow\R$ be measurable and satisfying \eqref{condition}. Then 
\begin{enumerate}
\item[a)] its signed decreasing rearrangement $f^\circ:I_\Omega\rightarrow(-\infty,\infty)$ is decreasing and equimeasurable with $f$;
\item[b)] its decreasing rearrangement $f^\ast:I_\Omega\rightarrow[0,\infty)$ is decreasing and equimeasurable with $|f|$.
\end{enumerate}
\end{lemma}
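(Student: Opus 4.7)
My plan is to base both parts on the distribution function $\lambda_f(s) := |\{x\in\Omega: f(x) > s\}|$, which is decreasing and right-continuous (by continuity of measure from above), and takes values in $[0,|\Omega|]$; the signed decreasing rearrangement is then precisely its generalized right-continuous inverse, $f^\circ(t)=\inf\{s\in\R:\lambda_f(s)\le t\}$. Hypothesis (\ref{condition}) guarantees this set is nonempty for every $t>0$, so $f^\circ(t)<\infty$ on $I_\Omega$, which will take care of the finite-valuedness claim.

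For part (a), monotonicity of $f^\circ$ will be immediate: if $t_1 < t_2$, then $\{s:\lambda_f(s)\le t_1\}\subset \{s:\lambda_f(s)\le t_2\}$, so taking infima yields $f^\circ(t_1)\ge f^\circ(t_2)$. For equimeasurability, my approach is to establish the equivalence
\[
f^\circ(t) > s \iff t < \lambda_f(s),
\]
valid for all $s\in\R$ and $t\in I_\Omega$. The forward implication comes for free: $f^\circ(t) > s$ forces $s\notin\{s':\lambda_f(s')\le t\}$, so $\lambda_f(s)>t$. For the reverse direction, assuming $\lambda_f(s)>t$, I would invoke right-continuity of $\lambda_f$ to pick $\eps>0$ with $\lambda_f(s+\eps)>t$, and then use monotonicity to conclude $\lambda_f(s')\ge \lambda_f(s+\eps)>t$ for all $s'\le s+\eps$, so the infimum strictly exceeds $s$. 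Once this equivalence is in hand, the distribution function of $f^\circ$ on $I_\Omega$ equals $|(0,\lambda_f(s))\cap I_\Omega| = \min(\lambda_f(s),|\Omega|) = \lambda_f(s)$, matching that of $f$.

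Part (b) will then follow by applying part (a) to $|f|$, together with the observation that $f^\ast = (|f|)^\circ$. Indeed, the infimum in the definition of $f^\ast$ is taken over $\{s\ge 0:\lambda_{|f|}(s)\le t\}$, but this coincides with the infimum over $\{s\in\R:\lambda_{|f|}(s)\le t\}$, because for $s<0$ one has $\lambda_{|f|}(s)=|\Omega|>t$ for every $t\in I_\Omega$, so no negative values of $s$ are ever admissible. Hence the conclusions for $f^\ast$ — decreasing, equimeasurable with $|f|$, and nonnegative (since $|f|\ge 0$) — are inherited directly from part (a) applied to the nonnegative function $|f|$.

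The main obstacle will be the reverse direction of the equivalence: it depends essentially on right-continuity of $\lambda_f$, which in turn must be extracted from continuity of measure from above and hence requires hypothesis (\ref{condition}) to ensure that the superlevel sets $\{f>s\}$ have finite measure for $s$ large enough. Everything else in the argument reduces to bookkeeping with monotonicity and set inclusions.
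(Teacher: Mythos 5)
Your argument is correct and follows essentially the same route as the paper's proof: monotonicity from the inclusion of the sets $\{s:\lambda_f(s)\le t\}$, equimeasurability from the equivalence $f^\circ(t)>s\iff t<\lambda_f(s)$ (which the paper asserts without detail and you verify), and part (b) by applying part (a) to $|f|$. One small correction to your closing remark: the right-continuity of $\lambda_f$ that you need follows from continuity of measure from \emph{below} (the sets $\{x:f(x)>s'\}$ increase to $\{x:f(x)>s\}$ as $s'\downarrow s$), so it holds unconditionally; hypothesis \eqref{condition} is needed only to guarantee that $f^\circ(t)<\infty$ on $I_\Omega$, as you correctly note at the outset.
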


\begin{proof}
If $t_1\geq t_2$, it follows that $$\{s: |\{x\in \Omega:f(x)>s \}|\leq t_2\}\subset\{s: |\{x\in \Omega:f(x)>s \}|\leq t_1\}.$$ 
Since this is equally true for $|f|$ in place of $f$, it shows that both $f^\ast$ and $f^\circ$ are decreasing functions. 

Fix $s$. For $t\in I_\Omega$, $f^\circ(t)>s$ if and only if $t<|\{x\in \Omega:f(x)>s \}|$, from where it follows that 
$$
|\{t\in I_\Omega:f^\circ(t)>s \}|=|\{x\in \Omega:f(x)>s \}|.
$$ 
Again, applying this to $|f|$ in place of $f$ yields the corresponding statement for $f^\ast$. 

\end{proof}

One may ask how the rearrangement $f^\ast$ behaves when additional conditions are imposed on $f$. In particular, is the map $f\mapsto f^\ast$ a bounded operator on various function spaces? A well-known result in this direction is that this map is an isometry on $\Lp$, which follows immediately from Lemmas \ref{lm:3} and \ref{lm:4}. 

\begin{proposition}
\label{pr:10}
For all $1\leq{p}\leq\infty$, if $f\in \Lp(\Omega)$ then $f^\ast\in \Lp(I_\Omega)$ with $\norm{f^\ast}{\Lp(I_\Omega)}=\norm{f}{\Lp(\Omega)}$.
\end{proposition}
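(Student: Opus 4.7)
The plan is to reduce the claim directly to Lemmas \ref{lm:3} and \ref{lm:4}. First I would check that $f^*$ is well defined when $f \in \Lp(\Omega)$. For $1 \leq p < \infty$, Chebyshev's inequality gives $|\{x \in \Omega : |f(x)| > s\}| \leq s^{-p} \norm{f}{\Lp}^p \to 0$ as $s \to \infty$, so the decay condition \eqref{condition} holds; for $p = \infty$, the distribution function vanishes for $s > \norm{f}{\Linfty}$, so \eqref{condition} is trivial. In either case $f^*$ is finite on $I_\Omega$ and the preceding lemmas are applicable.

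Next, note that $I_\Omega = (0, |\Omega|)$ and $\Omega$ carry the same total Lebesgue measure by construction, so the equal-measure hypothesis of Lemma \ref{lm:3} is satisfied. By Lemma \ref{lm:4}(b), $f^*$ is equimeasurable with $|f|$ in the sense of Definition \ref{def-equimeas}, and since $f^* \geq 0$ one has $|f^*| = f^*$, so $|f|$ and $|f^*|$ are equimeasurable as required by the hypothesis of Lemma \ref{lm:3}. Applying its first conclusion for $1 \leq p < \infty$ gives
\[
\int_{\Omega} |f|^p = \int_{I_\Omega} (f^*)^p,
\]
and taking $p$-th roots yields the asserted isometry; applying its second conclusion for $p = \infty$ gives $\esssup_{I_\Omega} f^* = \esssup_{\Omega} |f|$, which (since $f^* \geq 0$) is precisely the identity $\norm{f^*}{\Linfty(I_\Omega)} = \norm{f}{\Linfty(\Omega)}$.

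There is no substantive obstacle: the structural work has already been done in the preceding two lemmas, and the result is essentially bookkeeping. The only small point worth verifying is that $\Lp$ membership automatically forces \eqref{condition}, so that the rearrangements are genuinely finite and the equimeasurability machinery of Lemma \ref{lm:3} applies without further qualification.
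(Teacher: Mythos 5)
Your argument is correct and is exactly the route the paper intends: the text states that Proposition~\ref{pr:10} ``follows immediately from Lemmas \ref{lm:3} and \ref{lm:4},'' and your write-up simply supplies the bookkeeping (equimeasurability of $f^\ast$ with $|f|$, equality of measures of $\Omega$ and $I_\Omega$, and the verification that $\Lp$ membership forces \eqref{condition}). Nothing is missing.
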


Another well-known result is the P\'{o}lya-Szeg\H{o} inequality, which asserts that the Sobolev norm decreases under the symmetric decreasing rearrangement (\cite{bur}), yet another kind of rearrangement. From this one can deduce the following (see, for instance, \cite{cp}).

\begin{theorem}
If $f\in W^{1,p}(\R^n)$ then
$$
n\omega_n^{1/n}\left(\int_{0}^{\infty}\!\left|\frac{d}{{d}t}f^\ast(t)\right|^pt^{p/n'}\,{d}t\right)^{\frac{1}{p}}\leq \left(\int_{\R^n}\!|\nabla{f}|^p\right)^{\frac{1}{p}},
$$
where $n'$ is the H\"{o}lder dual exponent of $n$ and $\omega_n$ denotes the volume of the unit ball in $\Rn$.
\end{theorem}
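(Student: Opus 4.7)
The plan is to reduce the stated one-dimensional weighted inequality to the classical P\'olya--Szeg\H{o} inequality on $\Rn$, which the excerpt attributes to Burchard (\cite{bur}). The bridge between $f^\ast$ (a function on $I_\Omega=(0,\infty)$) and the gradient norm on $\Rn$ is the symmetric decreasing rearrangement $f^\#:\Rn\rightarrow[0,\infty)$, namely the unique radial, radially decreasing function that is equimeasurable with $|f|$. The key identity tying the two rearrangements together is
$$
f^\#(x)=f^\ast(\omega_n|x|^n),\qquad x\in\Rn,
$$
which follows from the fact that a ball of radius $r$ centered at the origin has measure $\omega_n r^n$, combined with the equimeasurability statement in Lemma~\ref{lm:4}(b).

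First I would invoke the P\'olya--Szeg\H{o} inequality in the form
$$
\left(\int_{\Rn}|\nabla f^\#|^p\right)^{1/p}\leq\left(\int_{\Rn}|\nabla f|^p\right)^{1/p},
$$
valid for $f\in W^{1,p}(\Rn)$. Next I would compute the left-hand side explicitly using that $f^\#$ is radial: writing $f^\#(x)=g(|x|)$ with $g(r)=f^\ast(\omega_n r^n)$, the chain rule gives $g'(r)=n\omega_n r^{n-1}(f^\ast)'(\omega_n r^n)$, and integration in polar coordinates yields
$$
\int_{\Rn}|\nabla f^\#|^p\,dx = n\omega_n\int_0^\infty|g'(r)|^p r^{n-1}\,dr.
$$
Substituting the expression for $g'(r)$ and changing variable via $t=\omega_n r^n$ (so $dt=n\omega_n r^{n-1}dr$ and $r=(t/\omega_n)^{1/n}$) collapses the powers of $\omega_n$ and $n$ to give, after simplification,
$$
\int_{\Rn}|\nabla f^\#|^p\,dx = n^p\omega_n^{p/n}\int_0^\infty\left|\tfrac{d}{dt}f^\ast(t)\right|^p t^{p(n-1)/n}\,dt.
$$
Recognising that $(n-1)/n=1/n'$, taking $p$-th roots, and combining with the P\'olya--Szeg\H{o} inequality yields precisely the stated bound.

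The only genuinely nontrivial ingredient is the P\'olya--Szeg\H{o} inequality itself, which is quoted from the literature; everything else is the radial change of variables, which is routine but requires careful bookkeeping of the exponents of $n$, $\omega_n$, and $t$. The main place one can slip is in the substitution $t=\omega_n r^n$: the factor $n\omega_n r^{n-1}$ from the surface measure must be paired with the factor of the same form coming from $g'(r)$, so that exactly one such factor survives as the Jacobian and exactly $p$ of them (in the form $n^p\omega_n^{p/n}t^{p(n-1)/n}$) remain as the weight. Once this accounting is done correctly, the identification $p(n-1)/n=p/n'$ is immediate and the constant $n\omega_n^{1/n}$ appears on the correct side.
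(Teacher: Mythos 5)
Your proposal is correct and follows exactly the route the paper indicates (it states the result is deduced from the P\'olya--Szeg\H{o} inequality, citing \cite{cp}, without writing out the computation): the identity $f^\#(x)=f^\ast(\omega_n|x|^n)$, the polar-coordinate integration with surface factor $n\omega_n$, and the substitution $t=\omega_n r^n$ all check out, yielding $\int_{\Rn}|\nabla f^\#|^p=n^p\omega_n^{p/n}\int_0^\infty|(f^\ast)'(t)|^pt^{p/n'}\,dt$ as claimed. The only point left implicit is the a.e.\ differentiability/local absolute continuity of $f^\ast$ needed to justify the chain rule, which is standard for $f\in W^{1,p}(\Rn)$ and is handled in the cited references.
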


Despite these positive results, there are some closely related spaces on which the operator $f\mapsto f^\ast$ is not bounded. One such example is the John-Nirenberg space $\JNp(\Omega)$. We say that $f\in \Loneloc(\Omega)$ is in $\JNp(\Omega)$ if there exists a constant $K\geq{0}$ such that
\begin{equation}\label{jnp}
\sup\sum|Q_i|\left(\dashint_{Q_i}\!|f-f_{Q_i}| \right)^p\leq K^p,
\end{equation}
where the supremum is taken over all collections of pairwise disjoint cubes $Q_i$ in $\Omega$. We define the quantity $\norm{f}{\JNp}$ as the smallest $K$ for which (\ref{jnp}) holds. One can show that this is a norm on $\JNp(\Omega)$ modulo constants. These spaces have been considered in the case where $\Omega$ is a cube in \cite{dhky,jn} and a general Euclidean domain in \cite{hsmv}, and generalised to a metric measure space in \cite{abmy,ms}.

While it is well known that $\Lp(\Omega)\subset \JNp(\Omega)\subset L^{p,\infty}(\Omega)$, the strictness of these inclusions has only recently been addressed (\cite{abmy,dhky}). 

In the case where $\Omega=I$, a (possibly unbounded) interval, the following is obtained:
\begin{theorem}[\cite{dhky}]
Let $f:I\rightarrow\R$ be a monotone function with $f\in \Lone(I)$. Then there exists $c=c(p)>0$ such that 
$$
\norm{f}{\JNp}\geq c\norm{f-C}{\Lp}
$$
for some $C\in\R$.
\end{theorem}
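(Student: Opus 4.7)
The plan is to reduce to the case when $f$ is decreasing and nonnegative, and then construct a specific pairwise disjoint collection of subintervals of $I$ such that evaluating the $\JNp$ sum on that collection recovers, up to constants depending only on $p$, the $\Lp$ norm of $f-C$. First, I would assume without loss of generality that $f$ is decreasing (otherwise apply the argument to $-f$, for which the $\JNp$ quasi-seminorm is unchanged). Writing $I = (a,b)$, the fact that $f$ is monotone and integrable forces the limit $C := \lim_{x\to b^-} f(x)$ to exist in $\R$. Replacing $f$ by $g := f-C$ gives a nonnegative decreasing function with $g(b^-)=0$, and $\|g\|_{\JNp} = \|f\|_{\JNp}$, so it suffices to show $\|g\|_{\Lp} \lesssim_p \|g\|_{\JNp}$.

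The next step is to slice $I$ according to the dyadic level sets of $g$. For $k \in \Z$, set $y_k := \sup\{x \in I : g(x) > 2^k\}$, so that $\{g > 2^k\} = (a,y_k)$ by monotonicity, and $y_k$ is nonincreasing in $k$. The intervals $E_k := (y_{k+1},y_k)$ are pairwise disjoint and, on $E_k$, $g$ takes values essentially in $(2^k, 2^{k+1}]$. A standard layer-cake comparison yields
\[
\|g\|_{\Lp}^p \;=\; p\int_0^\infty s^{p-1}|\{g>s\}|\,ds \;\sim_p\; \sum_{k\in\Z} 2^{kp}\,|E_k|.
\]
It remains to show that the right-hand side is dominated by a constant times $\|g\|_{\JNp}^p$. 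To this end I would exhibit, for each $k$, an interval $J_k \subset I$ with $|J_k|$ comparable to $|E_k|$ satisfying the shapewise lower bound $\dashint_{J_k}|g-g_{J_k}| \geq c_p\, 2^k$, in such a way that the collection $\{J_k\}$ is pairwise disjoint (or splits into finitely many disjoint subcollections). Given such a family, inserting it into the definition of $\|f\|_{\JNp}$ gives
\[
\|f\|_{\JNp}^p \;\geq\; \sum_k |J_k|\left(\dashint_{J_k}|f-f_{J_k}|\right)^p \;\gtrsim_p\; \sum_k |E_k|\cdot 2^{kp} \;\sim_p\; \|g\|_{\Lp}^p,
\]
which is the desired inequality.

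The main obstacle is the construction in the preceding step: the naïve choice $J_k = E_k$ does \emph{not} work, because a monotone function can be very skewed inside a single dyadic level set (e.g.\ almost equal to $2^k$ on most of $E_k$ with a sharp spike near the left endpoint), which forces $\dashint_{E_k}|g - g_{E_k}|$ to be arbitrarily small relative to $2^k$. The remedy I would pursue is to \emph{enlarge} each $J_k$ so as to straddle the transition between two consecutive dyadic levels, for instance by letting $J_k$ extend into $E_{k-1}$ far enough to the right that the median of $g$ on $J_k$ is comparable to $2^k$ while the set where $g \geq 2^k$ occupies a definite fraction of $J_k$; the median characterization of the infimum in Proposition~\ref{pr:6} for $p=1$ then gives $\dashint_{J_k}|g-g_{J_k}| \gtrsim 2^k$. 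Pairwise disjointness of the enlarged family can be recovered by a stopping-time-type selection (e.g.\ processing even and odd $k$ separately, or greedily selecting maximal non-overlapping $J_k$'s and showing that discarded levels contribute a controlled portion of the total). Tracking the combinatorial losses in this selection is where the dependence of $c(p)$ on $p$ is produced.
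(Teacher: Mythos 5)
The paper does not actually prove this theorem---it is quoted from \cite{dhky}---so your argument must stand on its own, and it has a fatal flaw already at the normalization step. You set $C=\lim_{x\to b^-}f(x)$ and reduce the theorem to the claim that $\norm{g}{\Lp}\lesssim_p\norm{g}{\JNp}$ for every nonnegative decreasing $g$ with $g(b^-)=0$. That claim is false. Take $g=A\chi_{(0,M)}$ on $I=(0,M+1)$: in any pairwise disjoint family of subintervals at most one interval straddles the jump at $x=M$, every other interval sees a constant, and for $Q=(M-s,M+t)$ with $t\le 1$ one computes $|Q|\bigl(\dashint_Q|g-g_Q|\bigr)^p=2^pA^ps^pt^p(s+t)^{1-2p}\le 2^{p}A^p$. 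Hence $\norm{g}{\JNp}^p\le 2^{p}A^p$ uniformly in $M$, while $\norm{g}{\Lp}^p=A^pM\to\infty$. The theorem survives only because one is free to choose $C$ (here $C=A$ works); the correct normalization is a median or mean of $f$ on $I$, not a boundary limit. (Also, that limit need not even be finite for a monotone $\Lone$ function on a finite interval, e.g.\ $f(x)=-(b-x)^{-1/2}$.)

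Separately, the step you yourself flag as the ``main obstacle'' is really the entire content of the theorem, and the remedy you sketch does not close it: the same mechanism as above defeats it. The function can sit on an arbitrarily long plateau at a value just below $2^k$, so that $|E_{k-1}|\gg|E_k|$; then any interval $J$ containing a fixed fraction of $E_k$ that also reaches values $\le 2^{k-1}$ must have $|J|\gtrsim|E_{k-1}|$, and its mean oscillation is diluted to order $2^k|E_k|/|E_{k-1}|$, so no interval with $|J_k|\sim|E_k|$ and oscillation $\gtrsim 2^k$ exists. A level-by-level pairing of the quantities $2^{kp}|E_k|$ with single-interval contributions to the $\JNp$ sum is therefore unavailable. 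The workable route, essentially that of \cite{dhky}, is organized by dyadic scales of the \emph{domain} rather than of the range: for monotone $f$ the difference of averages over consecutive dyadic subintervals is controlled by the mean oscillation over their union, those unions split into two pairwise disjoint subfamilies, and the telescoped pointwise bound on $|f-C|$ is converted into the $\Lp$ estimate by a discrete Hardy-type inequality. If you rewrite the proof, you should adopt this summation-across-scales structure in place of the level-set selection.
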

In other words, monotone functions are in $\JNp(I)$ if and only if they are also in $\Lp(I)$. In \cite{dhky}, an explicit example of a function $f\in \JNp(I)\setminus \Lp(I)$ is constructed when $I$ is a finite interval. This leads to the observation that the decreasing rearrangement is not bounded on $\JNp(I)$.

\begin{corollary}
If $I$ is a finite interval, there exists an $f\in \JNp(I)$ such that $f^\ast\notin \JNp(I_I)$. 
\end{corollary}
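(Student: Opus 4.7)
The plan is to use the preceding theorem contrapositively on the rearrangement $f^\ast$. Since $f^\ast$ is automatically monotone by Lemma \ref{lm:4}(b), that theorem forces $f^\ast \in \JNp(I_I)$ to imply $f^\ast - C \in \Lp(I_I)$ for some constant $C$; as $I_I$ is a finite interval this is the same as $f^\ast \in \Lp(I_I)$. So to produce a counterexample it suffices to exhibit an $f \in \JNp(I)$ whose rearrangement fails to be in $\Lp(I_I)$, which by Lemma \ref{lm:3} together with the equimeasurability of $f^\ast$ with $|f|$ (Lemma \ref{lm:4}(b)) is equivalent to $f \notin \Lp(I)$.

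First I would invoke the explicit construction from \cite{dhky} producing, on a finite interval $I$, a function $f \in \JNp(I) \cap \Lone(I)$ with $f \notin \Lp(I)$. The inclusion $\JNp(I) \subset L^{p,\infty}(I)$ on a set of finite measure ensures that $f$ satisfies the decay condition \eqref{condition}, so the decreasing rearrangement $f^\ast$ is well-defined and finite on $I_I = (0,|I|)$. By Lemma \ref{lm:4}(b), $f^\ast$ is decreasing, and by equimeasurability with $|f|$ combined with Lemma \ref{lm:3}, we have $f^\ast \in \Lone(I_I)$ while $\norm{f^\ast}{\Lp(I_I)} = \norm{f}{\Lp(I)} = \infty$.

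Next I would assume for contradiction that $f^\ast \in \JNp(I_I)$. Since $f^\ast$ is monotone and lies in $\Lone(I_I)$ with $I_I$ a bounded interval, the preceding theorem applies and yields a constant $C \in \R$ with $f^\ast - C \in \Lp(I_I)$. Because $I_I$ has finite measure, the constant function $C$ belongs to $\Lp(I_I)$, so $f^\ast \in \Lp(I_I)$, contradicting the conclusion of the previous paragraph.

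I do not anticipate any real obstacle: the main observation is simply that the preceding theorem, read as a converse and applied to the already-monotone function $f^\ast$, gives precisely the implication $f^\ast \in \JNp(I_I) \Rightarrow f \in \Lp(I)$ (through Proposition \ref{pr:10}), so any element of $\JNp(I) \setminus \Lp(I)$ transported through the decreasing rearrangement yields the desired failure of boundedness. The only minor technical point is to ensure that the $f$ provided by \cite{dhky} is integrable on $I$ so that the theorem can legitimately be applied to $f^\ast$; this is built into the cited construction, and in any case follows from the inclusion of $\JNp$ into $L^{p,\infty}$ on the finite-measure set $I$ for $p>1$.
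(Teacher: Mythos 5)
Your proposal is correct and follows essentially the same route as the paper: take the explicit example $f\in\JNp(I)\setminus\Lp(I)$ from \cite{dhky}, use Proposition~\ref{pr:10} to conclude $f^\ast\notin\Lp(I_I)$, and then apply the monotone-function theorem to the decreasing $f^\ast$ to rule out $f^\ast\in\JNp(I_I)$. The extra care you take with the decay condition \eqref{condition}, the integrability of $f$, and absorbing the constant $C$ on a finite interval is sound but not a different argument.
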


\begin{proof}
Since $f\notin \Lp(I)$, it follows from Proposition \ref{pr:10} that $f^\ast\notin \Lp(I_I)$. As $f^\ast$ is monotone, it follows from the previous theorem that $f^\ast\notin \JNp(I_I)$. 
\end{proof}

We consider now the question of boundedness of rearrangements on $\BMO{\cS}{p}(\Omega)$ spaces. 

\begin{problem}
Does there exist a constant $c$ such that for all $f\in\BMO{\cS}{p}(\Omega)$, $\norm{f^\ast}{\BMO{}{p}(I_\Omega) }\leq c\norm{f}{\BMO{\cS}{p}(\Omega)}$? If so, what is the smallest constant, written $c_\ast(p,\cS)$, for which this holds?
\end{problem}

\begin{problem}
Does there exist a constant $c$ such that for all $f\in\BMO{\cS}{p}(\Omega)$, $\norm{f^\circ}{\BMO{}{p}(I_\Omega) }\leq c\norm{f}{\BMO{\cS}{p}(\Omega)}$? If so, what is the smallest constant, written $c_\circ(p,\cS)$, for which this holds?
\end{problem}

Clearly, if such constants exist, then they are at least equal to one. The work of Garsia-Rodemich and Bennett-DeVore-Sharpley implies an answer to the first problem and that $c_\ast(1,\cQ)\leq 2^{n+5}$ when $\Omega=\Rn$:

\begin{theorem}[\cite{bdvs,gr}]
\label{th:6}
If $f\in \BMO{}{}(\Rn)$, then $f^\ast\in\BMO{}{}\big((0,\infty)\big)$ and 
$$
\norm{f^\ast}{\BMO{}{}}\leq 2^{n+5}\norm{f}{\BMO{}{}}.
$$
\end{theorem}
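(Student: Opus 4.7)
The plan is to first establish the pointwise Bennett-DeVore-Sharpley inequality
\[
f^{\ast\ast}(t) - f^{\ast}(t) \leq c_n\norm{f}{\BMO{}{}(\Rn)}, \qquad t > 0,
\]
where $f^{\ast\ast}(t) := \tfrac{1}{t}\int_0^t f^{\ast}(s)\,ds$, and then to convert this into an estimate on $\norm{f^{\ast}}{\BMO{}{}((0,\infty))}$ using the monotonicity of $f^{\ast}$.

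For the pointwise bound, I would begin with the layer-cake identity
\[
t\bigl(f^{\ast\ast}(t) - f^{\ast}(t)\bigr) = \int_0^t\bigl(f^{\ast}(s) - f^{\ast}(t)\bigr)\,ds = \int_{\Rn}\bigl(|f(x)| - f^{\ast}(t)\bigr)_{+}\,dx,
\]
valid because $|\{|f|>u\}|\leq t$ whenever $u > f^{\ast}(t)$. Setting $\alpha = f^{\ast}(t)$, the Calder\'{o}n-Zygmund stopping-time applied to $|f|$ at height $\alpha$ (permitted since \eqref{condition} makes $\{|f|>\alpha\}$ of finite measure) produces a disjoint family of cubes $\{Q_j\}$ with $\alpha < \dashint_{Q_j}|f| \leq 2^n\alpha$, with $|f|\leq\alpha$ almost everywhere off $\bigcup_j Q_j$, and with $\sum_j|Q_j|\leq t$. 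Since $(|f|-\alpha)_+$ vanishes outside $\bigcup_j Q_j$, the previous integral becomes $\sum_j\int_{Q_j}(|f|-\alpha)_+$. Combining with John-Nirenberg applied to $f - f_{Q_j}$ on each $Q_j$ --- which provides a sub-exponential bound $|\{x\in Q_j:|f(x)-f_{Q_j}|>\lambda\}| \leq C|Q_j|\exp(-c\lambda/\norm{f}{\BMO{}{}})$ --- and integrating out the tails in $\lambda$, one arrives at $\sum_j\int_{Q_j}(|f|-\alpha)_{+} \leq c_n\,t\,\norm{f}{\BMO{}{}}$, which yields the pointwise bound after division by $t$.

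For the passage from the pointwise inequality to the $\BMO{}{}$ norm of $f^{\ast}$, I would use that $f^{\ast}$ is decreasing on $(0,\infty)$. For an interval $(a,b)\subset(0,\infty)$, Proposition~\ref{pr:6} applied with $c = f^{\ast}(b)$ yields
\[
\dashint_a^b\bigl|f^{\ast}(s) - (f^{\ast})_{(a,b)}\bigr|\,ds \leq 2\dashint_a^b\bigl(f^{\ast}(s) - f^{\ast}(b)\bigr)\,ds,
\]
and a short dyadic argument (splitting into the cases $b \leq 2a$ and $b > 2a$) expresses the right-hand side as a bounded sum of quantities of the form $f^{\ast\ast}(\tau) - f^{\ast}(\tau)$, each controlled by the pointwise bound.

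The main obstacle is the bookkeeping required to land on the explicit constant $2^{n+5}$: the factor $2^n$ from the Calder\'{o}n-Zygmund cube selection, the factor $2$ from Proposition~\ref{pr:6}, the factor $2$ from Proposition~\ref{pr:2} used to bound $\norm{|f|}{\BMO{}{}}$, and the absolute constants from the John-Nirenberg geometric sum and the dyadic reduction must all be multiplied through carefully. Proving existence of \emph{some} dimensional constant is substantially easier than pinning down the specific value $2^{n+5}$.
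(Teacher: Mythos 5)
The paper itself gives no proof of this theorem (it is quoted from \cite{bdvs,gr}, with only the remark that the proof uses a variant of the Calder\'on--Zygmund decomposition), so your proposal can only be judged on its own terms. Your overall architecture --- first the pointwise inequality $f^{\ast\ast}(t)-f^{\ast}(t)\leq c_n\norm{f}{\BMO{}{}}$, then the passage to $\norm{f^\ast}{\BMO{}{}((0,\infty))}$ via Proposition~\ref{pr:6} with $c=f^\ast(b)$ and a dyadic case split --- is indeed the Bennett--DeVore--Sharpley route, and the second half of your argument is sound. The gap is in the first half: the Calder\'on--Zygmund cubes obtained by stopping on the averages of $|f|$ at height $\alpha=f^\ast(t)$ do \emph{not} satisfy $\sum_j|Q_j|\leq t$. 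The decomposition gives $\sum_j|Q_j|\leq\alpha^{-1}\norm{f}{\Lone}$, and $\bigcup_jQ_j$ is essentially the set where the dyadic maximal function of $|f|$ exceeds $\alpha$, which can be vastly larger than $\{|f|>\alpha\}$, the set whose measure is actually $\leq t$. Concretely, for $f=\alpha\chi_{[0,1]}+\eps\alpha\chi_{A}$ with $A\subset[0,1]$, $|A|=\delta$, and $t=\delta$, the cube $[0,1]$ is selected, so $\sum_j|Q_j|\geq 1\gg t$. Since your final estimate is $\sum_j\int_{Q_j}(|f|-\alpha)_+\lesssim\norm{f}{\BMO{}{}}\sum_j|Q_j|$, losing the measure bound on $\sum_j|Q_j|$ is fatal to the argument as written. (A secondary issue: even where your selection does control the averages, $|f|_{Q_j}-\alpha\leq(2^n-1)\alpha$ is a bound in terms of $\alpha$, not $\norm{f}{\BMO{}{}}$; you would need to pass to the parent cube to repair that.)

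The correct stopping time runs on the level set rather than on $|f|$: with $E=\{|f|>f^\ast(t)\}$, so $|E|\leq t$, take the maximal dyadic cubes $Q_j$ with $|Q_j\cap E|>\tfrac12|Q_j|$. These are pairwise disjoint, cover almost all of $E$ by the density theorem, and satisfy $\sum_j|Q_j|<2|E|\leq 2t$; moreover the dyadic parent $\widetilde{Q}_j$ has $|\widetilde{Q}_j\cap E|\leq\tfrac12|\widetilde{Q}_j|$, so $|f|\leq f^\ast(t)$ on at least half of $\widetilde{Q}_j$ and hence $f^\ast(t)$ dominates a median $m$ of $|f|$ there. Then $\int_{\widetilde{Q}_j}(|f|-f^\ast(t))_+\leq\int_{\widetilde{Q}_j}\big||f|-m\big|\leq\int_{\widetilde{Q}_j}\big||f|-|f|_{\widetilde{Q}_j}\big|\leq 2|\widetilde{Q}_j|\,\norm{f}{\BMO{}{}}$, using the median's minimizing property and Proposition~\ref{pr:2}; no appeal to John--Nirenberg is needed, and invoking it, as you propose, would in any case forfeit the explicit constant $2^{n+5}$, since the John--Nirenberg constants are not clean powers of $2$. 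Summing over $j$ and using $|\widetilde{Q}_j|=2^n|Q_j|$ produces the dimensional factor $2^{n+1}\cdot 2t$, which combined with the factors of $2$ from your (correct) monotone-function reduction is the bookkeeping that leads to a constant of the form $2^{n+5}$.
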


These results were obtained by a variant of the Calder\'{o}n-Zygmund decomposition (\cite{st2}). Riesz' rising sun lemma, an analogous one-dimensional result that can often be used to obtain better constants, was then used by Klemes to obtain the sharp estimate that for $\Omega=I$, a finite interval, $c_{\circ}(1,\cI)=1$.

\begin{theorem}[\cite{kl}]
If $f\in \BMO{}{}(I)$, then $f^\circ\in\BMO{}{}(I_I)$ and 
$$
\norm{f^\circ}{\BMO{}{}}\leq \norm{f}{\BMO{}{}}.
$$
\end{theorem}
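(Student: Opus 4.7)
My plan is to fix a subinterval $J=(\alpha,\beta)\subset I_I$ and show $\int_J|f^\circ-m|\le\norm{f}{\BMO{}{}}\,|J|$, where $m:=(f^\circ)_J$. Since translating $f$ by $-m$ translates $f^\circ$ by $-m$ as well (the distribution function shifts accordingly), I may assume $m=0$. The monotonicity of $f^\circ$ (Lemma~\ref{lm:4}) furnishes a point $\gamma\in[\alpha,\beta]$ with $f^\circ\ge 0$ on $(\alpha,\gamma)$ and $f^\circ\le 0$ on $(\gamma,\beta)$; combined with $\int_J f^\circ=0$, this yields $\int_J|f^\circ|=2\int_\alpha^\gamma f^\circ$, so it suffices to bound the latter by $\tfrac12\norm{f}{\BMO{}{}}\,|J|$.

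The main tool is Riesz's rising-sun lemma applied to the primitive $F(x):=\int_a^x f(t)\,dt$ on $I=[a,b]$. The shadow set $S=\{x\in(a,b):\exists\,y>x\text{ with }F(y)>F(x)\}$ decomposes as a countable disjoint union $S=\bigsqcup_k(a_k,b_k)$ with $F(a_k)=F(b_k)$, equivalently $f_{(a_k,b_k)}=0$; moreover $\{f>0\}\subset S$ almost everywhere, since $f>0$ at a Lebesgue point forces $F$ to be locally increasing. The mean-zero property on each interval gives $\int_{(a_k,b_k)}f^+=\tfrac12\int_{(a_k,b_k)}|f-f_{(a_k,b_k)}|$, and summing,
$$\sum_k\int_{(a_k,b_k)}f^+=\int_{\{f>0\}}f=\int_0^\gamma f^\circ$$
by Cavalieri's principle together with the equimeasurability of $f$ and $f^\circ$ (Lemma~\ref{lm:3}).

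The key step is to extract a disjoint sub-collection $\{K_j\}$ of these rising-sun intervals (padded, if necessary, by intervals from the analogous rising-sun decomposition of $-F$, which also have $f$-mean zero and, lying outside $\{f>0\}$ a.e., contribute $0$ to $\sum\int f^+$) so that $\sum_j|K_j|=|J|$ and $\sum_j\int_{K_j}f^+\ge\int_\alpha^\gamma f^\circ$. Granting this selection,
\[
\int_J|f^\circ|=2\int_\alpha^\gamma f^\circ\le 2\sum_j\int_{K_j}f^+=\sum_j\int_{K_j}|f-f_{K_j}|\le\norm{f}{\BMO{}{}}\sum_j|K_j|=\norm{f}{\BMO{}{}}\,|J|,
\]
which gives the theorem on dividing by $|J|$. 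A natural candidate for the sub-collection is the set of rising-sun intervals meeting the ``middle-top'' set $A:=\{x\in I:0<f(x)<f^\circ(\alpha)\}$, which has $|A|=\gamma-\alpha$ and $\int_A f=\int_\alpha^\gamma f^\circ$ by the equimeasurability of $f|_A$ with $f^\circ|_{(\alpha,\gamma)}$. The \emph{main obstacle} is verifying that the target total measure $|J|$ can be achieved without compromising the lower bound on $\sum\int_{K_j}f^+$; trimming away intervals whose $f$-values exceed $f^\circ(\alpha)$ (which correspond to the top-$\alpha$ mass) is controlled by the elementary estimate $\int_{|J|}^\gamma f^\circ\le\int_0^\alpha f^\circ$, valid since $|J|\ge\gamma-\alpha$ and $f^\circ$ is decreasing, as can be checked by a change of variables comparing the two integrands.
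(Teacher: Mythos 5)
The paper does not prove this theorem; it is quoted from Klemes \cite{kl}, with only the elementary identity of Lemma~\ref{mean} extracted as an ingredient. So your proposal must stand on its own, and unfortunately the step you yourself flag as ``the main obstacle'' is not merely unverified --- the selection you ask for does not exist in general. Take $I=(0,3)$ and $f=-\chi_{(0,1)}+\chi_{(1,2)}-\chi_{(2,3)}$, so that $f^\circ=\chi_{(0,1)}-\chi_{(1,3)}$, and take $J=(1/2,3/2)$, for which $(f^\circ)_J=0$, $\gamma=1$ and $\int_\alpha^\gamma f^\circ=1/2$. The primitive $F$ satisfies $F<0=F(2)$ on $(0,2)$ and decreases on $(2,3)$, so the rising-sun set is the single interval $(0,2)$, which does have $f$-mean zero and contains $\{f>0\}$. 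The only subfamilies are $\emptyset$ and $\{(0,2)\}$: the first captures no positive mass, the second has length $2>|J|=1$. No disjoint subcollection of rising-sun components with total length at most $|J|$ captures the required mass $1/2$, even though the theorem holds (with equality) for this $f$ and $J$. The intervals produced by one application of the rising-sun lemma are indivisible units --- truncating one destroys the mean-zero property on which the identity $\int f^+=\tfrac12\int|f-f_{K}|$ rests --- and a single long component can carry all of the relevant positive mass. This is precisely why Klemes' actual argument is not a one-shot covering: it iterates the rising-sun construction inside the components (raising the level, in a stopping-time fashion) until the total measure of the selected intervals drops to the target $|J|$, and this iteration is the substance of the proof that your sketch omits.

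Two further points. First, your padding device is unsound: the components of the rising-sun decomposition of $-F$ are in general neither disjoint from those of $F$ (in the example above they cover $(1,3)$, overlapping $(0,2)$) nor disjoint from $\{f>0\}$ --- indeed each such component has $f$-mean zero, so it must meet $\{f\geq 0\}$ in positive measure unless $f$ vanishes on it --- so they need not contribute $0$ to $\sum\int f^+$, and including them breaks disjointness. Second, a smaller but real issue: the rising-sun lemma yields $F(a_k)=F(b_k)$ only for components not abutting the left endpoint of $I$; for the component with $a_k=a$ one only gets $F(a_k)\leq F(b_k)$, i.e.\ $f_{(a_k,b_k)}\geq 0$, and then $\int_{(a_k,b_k)}f^+=\tfrac12\int_{(a_k,b_k)}|f-f_{(a_k,b_k)}|$ fails (e.g.\ $f=\chi_{(0,1/2)}-\chi_{(1/2,1)}$ gives the single component $(0,1/2)$ with mean $1$). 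The preliminary reductions --- normalizing $(f^\circ)_J=0$, the identity $\int_J|f^\circ|=2\int_\alpha^\gamma f^\circ$, the inclusion $\{f>0\}\subset S$ a.e., and the equimeasurability bookkeeping --- are all correct, but the argument does not close.
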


An elementary but key element of Klemes' proof that can be generalised to our context of general shapes is the following shapewise identity. 

\begin{lemma}
\label{mean}
For any shape $S$, if $f\in \Lone(S)$ then 
$$
\dashint_{S}\!|f-f_S|=\frac{2}{|S|}\int_{\{f>f_S \}}\!(f-f_S)=\frac{2}{|S|}\int_{\{f<f_S \}}\!(f_S-f).
$$
\end{lemma}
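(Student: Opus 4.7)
The plan is to split the integral of $|f-f_S|$ over $S$ according to the sign of $f-f_S$ and then exploit the defining property of the mean, namely that $\int_S(f-f_S)=0$.

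More concretely, I would begin by writing $S$ as the disjoint union of the three sets $\{f>f_S\}$, $\{f=f_S\}$, and $\{f<f_S\}$. On the middle set the integrand $|f-f_S|$ vanishes, so
\[
\int_S |f-f_S| \,dx = \int_{\{f>f_S\}}(f-f_S)\,dx + \int_{\{f<f_S\}}(f_S-f)\,dx =: A^+ + A^-.
\]
Both pieces are finite since $f\in L^1(S)$.

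Next, I would use the fact that $\int_S(f-f_S)\,dx = 0$, which is immediate from the definition $f_S = \dashint_S f$. Splitting this integral over the same three sets (again the middle set contributes zero) gives
\[
0 = \int_{\{f>f_S\}}(f-f_S)\,dx - \int_{\{f<f_S\}}(f_S-f)\,dx = A^+ - A^-,
\]
so $A^+ = A^-$.

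Combining the two displays yields $\int_S|f-f_S|\,dx = 2A^+ = 2A^-$; dividing through by $|S|$ produces the claimed identity. There is no real obstacle here: the only subtle point is handling the set $\{f=f_S\}$, which may have positive measure but contributes nothing to either side, so no further argument about its structure is needed.
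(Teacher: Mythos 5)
Your proof is correct and is essentially identical to the paper's: both decompose $\int_S|f-f_S|$ over the sets where $f-f_S$ is positive and negative, and then use $\int_S(f-f_S)=0$ to equate the two contributions. Your explicit remark about the set $\{f=f_S\}$ is a minor clarification the paper leaves implicit.
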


\begin{proof} Write
$$
\int_{S}\!|f(x)-f_S|\,dx=\int_{\{x\in{S}: f(x)>f_S \}}\!(f(x)-f_S)\,dx+\int_{\{x\in{S}:f(x)<f_S \}}\!(f_S-f(x))\,dx.
$$
Since
$$
\int_{\{x\in{S}: f(x)>f_S\}}\!(f(x)-f_S)\,dx+ \int_{\{x\in{S}:f(x)<f_S\}}\!(f(x)-f_S)\,dx=\int_{S}\!(f(x)-f_S)\,dx=0,
$$
it follows that
$$
\int_{\{x\in{S}:f(x)>f_S\}}\!(f(x)-f_S)\,dx= \int_{\{x\in{S}:f(x)<f_S\}}\!(f_S-f(x))\,dx,
$$
which gives the identity.
\end{proof}

The next sharp result concerning rearrangements is due to Korenovskii, showing that for $\Omega=I$, a finite interval, $c_{\ast}(1,\cI)=1$. The proof of this result makes direct use of Klemes' theorem.

\begin{theorem}[\cite{ko1}]\label{th:3}
If $f\in \BMO{}{}(I)$, then $f^\ast\in\BMO{}{}(I_I)$ and 
$$
\norm{f^\ast}{\BMO{}{}}\leq \norm{f}{\BMO{}{}}.
$$
\end{theorem}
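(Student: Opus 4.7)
The plan is to reduce the statement to Klemes' theorem via a preliminary monotonization step. First observe that since $f^\circ$ is equimeasurable with $f$ by Lemma~\ref{lm:4}(a), Lemma~\ref{lm:1} gives that $|f^\circ|$ is equimeasurable with $|f|$, so the decreasing rearrangement of $|f^\circ|$ coincides with $f^\ast$. Klemes' theorem supplies $\norm{f^\circ}{\BMO{}{}(I_I)}\leq\norm{f}{\BMO{}{}(I)}$. Thus, if we can establish the decreasing version of the result, namely that $\norm{\varphi^\ast}{\BMO{}{}(I_I)}\leq\norm{\varphi}{\BMO{}{}(I_I)}$ whenever $\varphi:I_I\to\R$ is decreasing, then applying this to $\varphi=f^\circ$ and chaining the two inequalities yields the theorem.

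For the reduced problem, when $\varphi\geq 0$ we have $\varphi^\ast=\varphi$ and there is nothing to prove; when $\varphi\leq 0$ we have $\varphi^\ast(t)=-\varphi(|I|-t)$, a reflection, and any subinterval $J\subset I_I$ reflects to an interval of the same length on which $-\varphi$ has the same mean oscillation, giving the result immediately. The essential case is when $\varphi$ changes sign at a unique point $t_0$, with $\varphi\geq 0$ on $(0,t_0)$ and $\varphi\leq 0$ on $(t_0,|I|)$. Fix then $J=(a,b)\subset I_I$, set $c=(\varphi^\ast)_J\geq 0$, and use the monotonicity of $\varphi^\ast$ to find $\lambda\in[0,b-a]$ with $\varphi^\ast\geq c$ on $(a,a+\lambda)$ and $\varphi^\ast\leq c$ on $(a+\lambda,b)$. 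Lemma~\ref{mean} then gives
$$
\dashint_J\!|\varphi^\ast-c|=\frac{2}{|J|}\int_a^{a+\lambda}\!(\varphi^\ast-c)\,dt.
$$
The super-level set $\{|\varphi|>c\}$ has the form $(0,p)\cup(q,|I|)$ with $p\leq t_0\leq q$, and a layer-cake computation combined with the definition of $\varphi^\ast$ identifies the right-hand integral with $\int_E(|\varphi|-c)\,dt$ for some measurable $E\subset(0,p)\cup(q,|I|)$ of measure $\lambda$.

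The main obstacle, and the heart of Korenovskii's argument, is to select a single interval $J'\subset I_I$ with $|J'|=|J|$ together with a constant $\tilde c=\varphi_{J'}$ such that Lemma~\ref{mean} applied to $\varphi$ on $J'$ produces
$$
\dashint_{J'}\!|\varphi-\tilde c|=\frac{2}{|J'|}\int_{\{\varphi>\tilde c\}\cap J'}\!(\varphi-\tilde c)\,dt\geq\frac{2}{|J|}\int_E\!(|\varphi|-c)\,dt,
$$
the left-hand side of which is at most $\norm{\varphi}{\BMO{}{}}$ by definition. The natural candidate for $J'$ is an interval straddling $t_0$ which absorbs $(0,p)\cup(q,|I|)$ into a single contiguous piece; the role of $\tilde c$ is to convert the two-sided super-level integral of $|\varphi|$ into a one-sided integral of $\varphi-\tilde c$ by exploiting the monotonicity of $\varphi$ across $t_0$. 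Matching the lengths exactly as $|J'|=|J|$, rather than merely $|J'|\geq|J|$, is precisely what preserves the sharp constant $1$; any looser matching would deliver only $\norm{f^\ast}{\BMO{}{}}\leq C\norm{f}{\BMO{}{}}$ with $C>1$. The combinatorial bookkeeping of the sign change, together with the verification that $\tilde c$ can be pinned down so that both sides align without loss, is the delicate technical step where Klemes' monotone machinery is leveraged.
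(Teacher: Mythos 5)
Your reduction is set up correctly and is essentially the route the paper takes: Theorem~\ref{th:3} is obtained from Klemes' theorem ($c_\circ(1,\cI)=1$) together with a statement about decreasing functions, via the identity $f^\ast=|f|^\circ$ (this is exactly the content of Propositions~\ref{pr:7} and~\ref{pr:16} specialized to $\cI$ with $c=1$). Your observation that $(f^\circ)^\ast=f^\ast$ via Lemmas~\ref{lm:4} and~\ref{lm:1} is also correct. The problem is that the entire mathematical content of the theorem now sits in your ``reduced problem'' for decreasing $\varphi$, and you do not prove it. You fix $J$, apply Lemma~\ref{mean}, identify $\int_a^{a+\lambda}(\varphi^\ast-c)$ with $\int_E(|\varphi|-c)$ for a set $E$ consisting of two pieces, one on each side of the sign change $t_0$, and then assert that ``the heart of Korenovskii's argument'' is to find a single interval $J'$ with $|J'|=|J|$ and a constant $\tilde c=\varphi_{J'}$ dominating this two-sided integral. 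That selection is precisely the theorem; calling it ``the delicate technical step where Klemes' monotone machinery is leveraged'' does not carry it out. In particular, the ``natural candidate'' $J'$ you describe (an interval straddling $t_0$ absorbing $(0,p)\cup(q,|I|)$) generally has length strictly larger than $|J|$, since it must also contain the middle region where $|\varphi|$ is small, so the length-matching you correctly identify as essential for the sharp constant is exactly what is left unverified. As written, the argument establishes nothing beyond the trivial cases $\varphi\geq 0$ and $\varphi\leq 0$.

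Note that the missing step is, up to packaging, Theorem~\ref{th:1} of the paper (also due to Korenovskii and stated there without proof): for decreasing $\varphi$ on a finite interval, $\dashint_{I}\bigl||\varphi|-|\varphi|_{I}\bigr|\leq\sup_{J\subset I}\dashint_{J}|\varphi-\varphi_{J}|$. Granting that as a black box, your reduced claim follows cleanly without any interval-selection argument: Proposition~\ref{pr:7} gives $\norm{|f|}{\BMO{}{}}\leq\norm{f}{\BMO{}{}}$ (Theorem~\ref{thm-absvalI}), and then a second application of Klemes' theorem to $|f|$ gives
$$
\norm{f^\ast}{\BMO{}{}}=\norm{|f|^\circ}{\BMO{}{}}\leq\norm{|f|}{\BMO{}{}}\leq\norm{f}{\BMO{}{}},
$$
which is Proposition~\ref{pr:16} with $c=1$. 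So either you must cite Theorem~\ref{th:1} (in which case your monotonization detour is unnecessary), or you must actually prove the decreasing case, which you have not done.
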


Important in Korenovskii's transition from a sharp estimate for $c_\circ(1,\cS)$ to one for $c_\ast(1,\cS)$ is the fact that $|f|^\circ=f^\ast$, bringing us to consider the boundedness of the absolute value operator. Recall from Example~\ref{ex-absval} that $F(x)=|x|$ gives us the sharp shapewise inequality in Proposition \ref{pr:2} with $p=1$, which implies that $\norm{|f| }{\BMO{\cS}{}}\leq 2\norm{f}{\BMO{\cS}{}}$. However, this need not be sharp as a norm inequality, and so it is natural to ask

\begin{problem}
What is the smallest constant $c_{|\cdot|}(p,\cS)$ such that $\norm{|f|}{\BMO{\cS}{p}}\leq c_{|\cdot|}(p,\cS)\norm{f}{\BMO{\cS}{p}}$ holds for all $f\in \BMO{\cS}{p}(\Omega)$?
\end{problem}

It is clear that $c_{|\cdot|}(p,\cS)\geq{1}$ and Proposition \ref{pr:2} implies that $c_{|\cdot|}(p,\cS)\leq 2$. Applying this estimate along with Klemes' theorem yields the non-sharp bound $c_\ast(1,\cI)\leq{2}$. 

In order for Korenovskii to obtain a sharp result for $c_\ast(1,\cI)$, a more subtle argument was needed that allowed him to conclude that $c_{|\cdot|}(1,\cI)={1}$ when $\Omega=I$:
 
\begin{theorem}[\cite{ko1}]
\label{thm-absvalI}
If $f\in \BMO{}{}(I)$, then $\norm{|f|}{\BMO{}{}}\leq \norm{f}{\BMO{}{}}.$
\end{theorem}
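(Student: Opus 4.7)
The plan is to reduce the problem to a statement about monotone functions via the signed decreasing rearrangement, and then to prove that statement by a subinterval decomposition built on Lemma~\ref{mean}. Fix a subinterval $J \subseteq I$. Let $g = f|_J$ and $g^\circ$ be its signed decreasing rearrangement on $I_J$. Equimeasurability of $g$ and $g^\circ$ (Lemma~\ref{lm:4}) combined with Lemmas~\ref{lm:1} and~\ref{lm:3} yields $|f|_J = |g^\circ|_{I_J}$ and
\[
\dashint_J ||f|-|f|_J| = \dashint_{I_J}||g^\circ|-|g^\circ|_{I_J}|.
\]
Klemes' theorem then provides $\norm{g^\circ}{\BMO{}{}(I_J)} \leq \norm{g}{\BMO{}{}(J)} \leq \norm{f}{\BMO{}{}(I)}$, so it suffices to prove that for every decreasing $h$ on an interval $K$ one has $\dashint_K ||h|-|h|_K| \leq \norm{h}{\BMO{}{}(K)}$.

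Fix such $h$ on $K = (a, b)$ and set $\mu = |h|_K$. If $h$ has constant sign on $K$, the inequality is trivial with $J = K$. Otherwise, the strategy is to produce a point $c \in K$ such that $h$ has constant sign on each of $J_1 = (a, c)$ and $J_2 = (c, b)$ and such that $|h|_{J_1} = |h|_{J_2} = \mu$. Granted such $c$, constancy of sign on $J_1$ gives $|h|_{J_1} = |h_{J_1}|$, and the assumption $|h|_{J_1} = \mu$ then yields $\dashint_{J_1}||h|-|h|_{J_1}| = \dashint_{J_1}|h-h_{J_1}| \leq \norm{h}{\BMO{}{}(K)}$; the analogous identity (with signs flipped) holds on $J_2$. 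Writing
\[
\dashint_K ||h|-\mu| = \frac{|J_1|}{|K|}\dashint_{J_1}||h|-\mu| + \frac{|J_2|}{|K|}\dashint_{J_2}||h|-\mu|
\]
as a convex combination and bounding each term by $\norm{h}{\BMO{}{}(K)}$ gives the result. The existence of $c$ is verified by an intermediate-value argument applied to $c \mapsto h_{(a,c)}$, using the WLOG assumption $h(a^+) \geq \mu$ (valid since $\mu \leq \esssup_K|h| = \max(h(a^+), -h(b^-))$) together with the conservation identity $|h|_{J_2} = \mu$, which is forced by $|h|_{J_1} = \mu$, $\int_K |h| = \mu|K|$, and the constant sign on $J_2$.

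The main obstacle is ensuring that the candidate $c$ produced above actually lands in the ``zero region'' $\{x \in K : h(x) = 0\}$, which is required for both $J_1$ and $J_2$ to have constant sign simultaneously. When the zero region is large the construction proceeds directly; in the generic case, where $h$ crosses zero at a single point, one must choose $c$ carefully near that point, and in degenerate configurations (when $\mu$ sits in an unfavourable position relative to the averages of $h$ on the strictly positive and strictly negative parts of $K$) this subinterval choice is unavailable. For such cases the decreasingness of $h$ guarantees that each level set $\{h > s\}$ is an interval $(a, \tau(s))$, and combining this with Lemma~\ref{mean} allows the direct comparison $\int_{\{|h|>\mu\}}(|h|-\mu) \leq \int_{\{h<h_K\}}(h_K-h)$, which gives $\dashint_K ||h|-|h|_K| \leq \dashint_K |h-h_K| \leq \norm{h}{\BMO{}{}(K)}$. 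The case analysis that decides when each of these two strategies applies, balanced against the relative sizes of $\int_{\{h>0\}}h$, $\int_{\{h<0\}}|h|$, and $\mu$, is the substantive technical content of the argument.
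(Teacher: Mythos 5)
Your reduction --- restrict to a subinterval $J$, pass to the signed decreasing rearrangement $g^\circ$ of $f|_J$, transfer the oscillation of $|f|$ on $J$ to that of $|g^\circ|$ on $(0,|J|)$ via equimeasurability, and invoke Klemes' theorem --- is exactly the route the paper takes: it is the proof of Proposition~\ref{pr:7} specialized to $\cS=\cI$, combined with $c_\circ(1,\cI)=1$. What remains after that reduction is precisely Theorem~\ref{th:1}, the inequality $\dashint_K\big||h|-|h|_K\big|\leq\norm{h}{\BMO{}{}(K)}$ for decreasing $h$, which the paper cites from \cite{ko1} without proof. You attempt to supply a proof of it, and that is where the gap lies.

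The two strategies you propose do not cover all cases, and no case analysis can make them do so. Take $K=(0,1)$ and the decreasing function $h=10$ on $(0,0.1)$, $h=0.1$ on $(0.1,0.9)$, $h=-2$ on $(0.9,1)$. Then $h_K=0.88$ and $\mu=|h|_K=1.28$, and one computes $\dashint_K\big||h|-\mu\big|=1.888$ while $\dashint_K|h-h_K|=1.824$; so the ``direct comparison'' $\int_{\{|h|>\mu\}}(|h|-\mu)\leq\int_{\{h<h_K\}}(h_K-h)$ is false here and Strategy B fails. Strategy A fails as well: the only $c$ for which $h$ has constant sign on both $(0,c)$ and $(c,1)$ is the crossing point $c=0.9$, but $|h|_{(0,0.9)}=1.2\neq\mu$, so the balanced split with $|h|_{J_1}=|h|_{J_2}=\mu$ does not exist, and for any other $c$ one of the two pieces mixes signs, which your argument must exclude. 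The inequality of Theorem~\ref{th:1} does hold for this $h$, but only via a subinterval of a different kind: on $J=(0,0.9)$ one has $\dashint_J|h-h_J|=1.76/0.9\approx 1.956>1.888$. So the case analysis you defer as ``the substantive technical content'' is not merely unfinished --- the plan itself needs a new idea, namely the use of subintervals running from an endpoint to the sign change (this is essentially the content of Korenovskii's argument). If you instead simply invoke Theorem~\ref{th:1} as stated in the paper, your proof is complete and coincides with the paper's.
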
 

The following is one of the essential parts of this argument. It demonstrates that the behaviour of the absolute value operator is more easily analyzed for decreasing functions. 

\begin{theorem}[\cite{ko1}]
\label{th:1}
Let $I$ be a finite interval and $f\in \Lone(I)$ be a decreasing function. Then, 
$$
\dashint_{I}\!\big||f|-|f|_{I} \big|\leq \sup_{J\subset I}\dashint_{J}\!|f-f_{J}|
$$
where the supremum is taken over all subintervals $J$ of $I$. 
\end{theorem}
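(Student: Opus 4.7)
The plan is to produce a subinterval $J\subset I$ on which the mean oscillation of $f$ dominates the mean oscillation of $|f|$ on $I$. If $f$ has constant sign on $I$, then $|f|=\pm f$ and the inequality holds trivially with $J=I$. Otherwise, $f$ changes sign at some $c\in(a,b)$; composing the isometry $x\mapsto a+b-x$ with negation if necessary (an operation preserving both monotonicity of $f$ and the two sides of the inequality), I may assume $f(a^+)\geq -f(b^-)$, so that $M:=|f|_I\leq\esssup_I|f|=f(a^+)$. Setting $c_1=\inf\{x:f(x)\leq M\}$, $c_2=\sup\{x:f(x)\geq -M\}$, $Y=\int_a^{c_1}(f-M)\,dx$, and $X=\int_{c_2}^{b}(-f-M)\,dx$, Lemma~\ref{mean} applied to $|f|$ on $I$ yields
\[
\dashint_I\bigl||f|-|f|_I\bigr|\,dx=\frac{2(Y+X)}{|I|}.
\]

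Since $t\mapsto f_{(a,t)}$ is continuous and monotonically decreases from $f(a^+)\geq M$ to $f_I\leq M$, the intermediate value theorem supplies some $b'\in(a,b]$ with $f_{(a,b')}=M$. A short argument shows $c_1\leq b'\leq c_2$: supposing $b'>c_2$ leads to $M(b-c_2)\geq |B|+|P|$ with $P=\int_c^{c_2}f$, which combined with the identity $|B|=|P|+X+M(b-c_2)$ (obtained by decomposing $\int_c^{b}(-f)\,dx$) forces $X=|P|=0$, a degenerate case in which the desired inequality holds trivially. Taking $J=(a,b')$ with $\ell=|J|$ and $\ell'=|I|-\ell$, since $f_J=M$ and $\{f>M\}\cap J=(a,c_1)$, Lemma~\ref{mean} applied to $f$ on $J$ gives
\[
\dashint_J|f-f_J|\,dx=\frac{2Y}{\ell},
\]
so the theorem reduces to the algebraic inequality $\ell'Y\geq\ell X$.

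The principal obstacle will be establishing this last inequality. My strategy is to exploit the concavity of the primitive $\phi(t):=\int_a^{t}(f-M)\,dx$---concave because $\phi'=f-M$ is decreasing---together with the landmark values $\phi(a)=\phi(b')=0$, $\phi(c_1)=Y$, $\phi(c_2)=-X-2|P|$, and $\phi(b)=-2|B|$, and the structural identity $|B|=|P|+X+M(b-c_2)$. A careful combination of chord inequalities on the sub-intervals $[c_1,c_2]$ and $[b',b]$ should deliver $\ell'Y\geq\ell X$; however, a naive application of any single chord bound falls short, so the argument must synthesize multiple concavity estimates with the algebraic identity. Translating these geometric constraints into a clean comparison between $\ell'Y$ and $\ell X$ is the technical crux that I expect to be the most delicate part of the proof.
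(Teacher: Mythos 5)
This theorem is not proved in the paper at all --- it is imported from Korenovskii \cite{ko1} --- so there is no in-paper argument to compare yours against; judged on its own terms, your proposal has a fatal problem at its center. The preliminary reductions are fine: the two applications of Lemma~\ref{mean} (to $|f|$ on $I$, giving $\dashint_I\big||f|-|f|_I\big|=2(Y+X)/|I|$, and to $f$ on $J=(a,b')$, giving $\dashint_J|f-f_J|=2Y/\ell$) are correct, as is the observation that the theorem would follow for this particular $J$ from $\ell'Y\geq \ell X$. But you never prove that inequality --- you only announce that chord estimates ``should deliver'' it --- and it is the entire content of the theorem; everything before it is bookkeeping.

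Worse, the inequality $\ell'Y\geq\ell X$ is false in general, so the gap cannot be closed along the announced lines: $J=(a,b')$ is not always a valid witness. Odd linear functions give exact equality $\ell'Y=\ell X$, and perturbing off them pushes the difference to either sign. Concretely, take $I=(0,2)$ and $f(t)=1-t-\tfrac{3}{10}\sin(\pi t/2)$, which is decreasing and satisfies your normalization, since $f(0^+)=-f(2^-)=1$. Numerically, $M=|f|_I\approx 0.542$, $Y\approx 0.072$, $X\approx 0.180$, $b'\approx 0.64$, so $\ell'Y\approx 0.098<0.115\approx\ell X$; equivalently, $\dashint_J|f-f_J|\approx 0.225$ while $\dashint_I\big||f|-|f|_I\big|\approx 0.252$. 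The theorem itself is safe here ($\dashint_I|f-f_I|\approx 0.51$, so $J=I$ works, and the reflection $-f(2-t)$ does satisfy your inequality), but your normalization $f(a^+)\geq -f(b^-)$ holds with equality and so does not select the good orientation; adding a tiny decreasing bump supported near $t=0$ forces the bad orientation while keeping $\ell'Y<\ell X$. So the witnessing subinterval must be chosen in a way that depends on $f$ more delicately than taking the first $b'$ with $f_{(a,b')}=M$, and as written the proposal is both incomplete and, in its key claimed step, incorrect.
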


Further sharp results were obtained by Korenovskii in the case where $\Omega=R$, a rectangle, and $\cS=\cR$: it was shown that, similar to the one-dimensional case just discussed, 
$c_{|\cdot|}(1,\cR)=c_{\circ}(1,\cR)=c_{\ast}(1,\cR)=1$. 

\begin{theorem}[\cite{ko3}]
\label{thm-absvalR}
If $f\in \BMO{\cR}{}(R)$, then $|f|\in\BMO{\cR}{}(R)$ and $f^\circ,f^\ast\in\BMO{}{}(I_R)$ with 
$$
\norm{|f|}{\BMO{\cR}{}}\leq \norm{f}{\BMO{\cR}{}},\qquad\norm{f^\circ}{\BMO{}{}}\leq \norm{f}{\BMO{\cR}{}},\qquad\norm{f^\ast}{\BMO{}{}}\leq \norm{f}{\BMO{\cR}{}}.
$$
\end{theorem}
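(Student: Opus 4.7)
The plan is to prove the three bounds in sequence, leveraging the product structure of rectangles to reduce most of the work to the one-dimensional results already established. Throughout, let $K = \norm{f}{\BMO{\cR}{}}$.

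For the bound $\norm{|f|}{\BMO{\cR}{}} \leq K$, I would proceed by induction on the dimension $n$, with Theorem~\ref{thm-absvalI} as the base case. Fix a sub-rectangle $R' = I_1 \times R'' \subset R$ and consider the partial average $g(x_1) = \dashint_{R''} |f(x_1,y)|\,dy$. One may split
\[
|f(x_1,y)| - |f|_{R'} = \big(|f(x_1,y)| - g(x_1)\big) + \big(g(x_1) - g_{I_1}\big).
\]
For each fixed $x_1$, the first term is controlled by applying the inductive hypothesis to the slice $f(x_1,\cdot) \in \BMO{\cR}{}(R'')$, whose strong BMO norm is at most $K$; the second is handled by Theorem~\ref{thm-absvalI} applied to the one-dimensional function $g$ on $I_1$. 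The delicate point is combining these bounds to yield the sharp constant $1$ rather than $2$. Theorem~\ref{th:1} is essential here, as it permits the replacement of the oscillation of $|\cdot|$ by that of the original function once the appropriate monotonicity of the partial averages is exhibited.

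For $\norm{f^\circ}{\BMO{}{}} \leq K$, the plan is to lift Klemes' argument from intervals to rectangles. Given a sub-interval $J \subset I_R$, Lemma~\ref{mean} identifies $\frac{|J|}{2}\dashint_J |f^\circ - (f^\circ)_J|$ with the integral of $f^\circ - (f^\circ)_J$ over the super-level set $\{t \in J : f^\circ(t) > (f^\circ)_J\}$, which, since $f^\circ$ is decreasing, is an initial sub-interval $(a,a') \subset J$. By equimeasurability, this set corresponds to a super-level set $E = \{x \in R : f(x) > s\}$ of $f$ with $|E| = a'$. The crucial step is to approximate $E$ from inside by a disjoint family of sub-rectangles $\{R_i\}$ on which Lemma~\ref{mean} gives $\int_{R_i}(f - f_{R_i})_+ \leq \tfrac{1}{2} K |R_i|$; summing these and returning through Lemma~\ref{mean} applied to $J$ should recover the desired bound. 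This is a rectangular analog of Riesz's rising sun construction.

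The bound on $f^\ast$ is then immediate: since $f^\ast = |f|^\circ$,
\[
\norm{f^\ast}{\BMO{}{}} = \norm{|f|^\circ}{\BMO{}{}} \leq \norm{|f|}{\BMO{\cR}{}} \leq \norm{f}{\BMO{\cR}{}},
\]
where the first inequality is the signed-rearrangement bound applied to $|f|$ and the second is the absolute value bound. The main obstacle is the construction of the rectangular rising-sun decomposition in the middle step: a general measurable subset of $R$ need not admit an exhausting decomposition into axis-parallel sub-rectangles with controlled strong BMO oscillations, and Riesz's one-dimensional lemma does not naively extend to $\Rn$. Korenovskii's resolution is to iterate one coordinate at a time, which meshes with the inductive scheme used for $|f|$; ensuring that the constants along the way do not degrade past $1$ is the most demanding aspect.
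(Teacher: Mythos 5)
First, note that the paper does not prove this theorem at all: it is quoted from Korenovskii \cite{ko3}, so there is no in-paper argument to match yours against. Judged on its own terms, your proposal is a plan rather than a proof, and the two places where you yourself flag an ``obstacle'' are exactly the two places where the entire content of the theorem lives, and neither is filled. For the bound on $|f|$, the splitting $|f(x_1,y)|-|f|_{R'}=(|f(x_1,y)|-g(x_1))+(g(x_1)-g_{I_1})$ followed by the triangle inequality gives each piece at most $K$, hence constant $2$; you assert that Theorem~\ref{th:1} rescues the constant $1$ but do not exhibit the required monotonicity of the partial averages or the actual combination step, so the sharp constant is not obtained. For the bound on $f^\circ$, the whole difficulty is the multidimensional rising-sun decomposition: a super-level set $E=\{f>s\}$ in a rectangle must be exhausted (up to null sets) by pairwise disjoint sub-rectangles on which the mean of $f$ equals $s$, and this is a genuinely hard separate theorem (Korenovskyy--Lerner--Stokolos, \cite{kls} in the bibliography), not a routine iteration of Riesz's lemma one coordinate at a time. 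Without it the middle step collapses, and with it unproved the first and third bounds have no foundation either.

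A structural remark that would have simplified your plan: the paper's own Propositions~\ref{pr:7} and \ref{pr:16} show that $c_{|\cdot|}(1,\cS)\leq c_\circ(1,\cS)$ and $c_\ast(1,\cS)\leq c_\circ(1,\cS)^2$ for an arbitrary basis, so once $c_\circ(1,\cR)=1$ is established the other two inequalities follow immediately with constant $1$; your separate induction on dimension for $|f|$ is therefore unnecessary (and, as noted, incomplete). Your final chaining $\norm{f^\ast}{\BMO{}{}}=\norm{|f|^\circ}{\BMO{}{}}\leq\norm{|f|}{\BMO{\cR}{}}\leq\norm{f}{\BMO{\cR}{}}$ is the correct logic, but it presupposes the two unproved bounds. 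The single thing that must actually be proved is the rectangular rising-sun lemma and the resulting estimate $c_\circ(1,\cR)=1$, and that is precisely what is missing.
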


This demonstrates the paradigm that rectangles behave more similarly to one-dimensional intervals than cubes do. In particular, the generalization of Klemes' theorem to the higher-dimensional case of rectangles (the result that $c_{\circ}(1,\cR)=1$) employs a multidimensional analogue of Riesz' rising sun lemma using rectangles (\cite{kls}) when such a theorem could not exist for arbitrary cubes.

Following the techniques of \cite{ko1}, general relationships can be found between the constants $c_{|\cdot|}(1,\cS)$, $c_{\circ}(1,\cS)$, $c_{\ast}(1,\cS)$ for an arbitrary basis of shapes. First, we show that $c_{|\cdot|}(1,\cS)\leq c_{\circ}(1,\cS)$.

\begin{proposition}
\label{pr:7}
For any collection of shapes $\cS$, if $f\in \BMO{\cS}{}(\Omega)$ and $\norm{f^\circ}{\BMO{}{}(I_\Omega) }\leq c\norm{f}{\BMO{\cS}{}(\Omega)}$, then $\norm{|f|}{\BMO{\cS}{}(\Omega)}\leq c\norm{f}{\BMO{\cS}{}(\Omega)}$ for the same constant $c$.
\end{proposition}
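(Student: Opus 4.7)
The plan is to reduce the mean oscillation of $|f|$ on a shape $S$ to a one-dimensional mean oscillation via rearrangement, then apply the one-dimensional Theorem~\ref{th:1} (which controls the oscillation of the absolute value of a decreasing function by the oscillation of the function itself), and finally invoke the hypothesis on $f^\circ$.

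Fix $S\in\cS$ and let $g=(f|_S)^{\circ}$ be the signed decreasing rearrangement of $f|_S$. By Lemma~\ref{lm:4}(a), $g$ is a decreasing function on $I_S=(0,|S|)$ equimeasurable with $f|_S$, so by Lemma~\ref{lm:1}, $|g|$ is equimeasurable with $|f|$ on $S$; in particular $|f|_S=|g|_{I_S}$. Applying Lemma~\ref{lm:3} with the constant $|f|_S$ converts the oscillation of $|f|$ on $S$ into a one-dimensional oscillation,
\[
\dashint_S\bigl||f|-|f|_S\bigr|=\dashint_{I_S}\bigl||g|-|g|_{I_S}\bigr|,
\]
and since $g$ is decreasing on the interval $I_S$, Theorem~\ref{th:1} yields
\[
\dashint_{I_S}\bigl||g|-|g|_{I_S}\bigr|\leq\sup_{J\subset I_S}\dashint_J|g-g_J|.
\]

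To finish, I would show that the right-hand side is at most $\norm{f^{\circ}}{\BMO{}{}(I_\Omega)}$, since then the hypothesis directly supplies the bound $c\,\norm{f}{\BMO{\cS}{}(\Omega)}$. This last reduction is the main obstacle: the function $g=(f|_S)^{\circ}$ lives on $I_S$ and encodes only the distribution of $f$ on $S$, whereas $f^\circ$ lives on $I_\Omega$ and encodes the global distribution of $f$ on $\Omega$. The two rearrangements need not agree on $I_S$, though the pointwise bound $(f|_S)^\circ(t)\leq f^\circ(t)$ does hold. Following Korenovskii's strategy from the proof of Theorem~\ref{thm-absvalI}, the idea is that for each subinterval $J\subset I_S$, equimeasurability identifies a subset $E\subset S$ with $|E|=|J|$ on which $f$ has the same distribution as $g|_J$; the oscillation of $f$ on $E$ is then compared to that of $f^\circ$ on an interval of $I_\Omega$ of the same length, using that $f$ and $f^\circ$ are themselves equimeasurable. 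Taking the supremum over all $S\in\cS$ will then yield
\[
\norm{|f|}{\BMO{\cS}{}(\Omega)}\leq\norm{f^\circ}{\BMO{}{}(I_\Omega)}\leq c\,\norm{f}{\BMO{\cS}{}(\Omega)}.
\]
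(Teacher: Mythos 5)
Your first three steps coincide with the paper's argument: pass to the signed decreasing rearrangement of $f|_S$, use Lemmas~\ref{lm:1}, \ref{lm:3} and \ref{lm:4} to convert the mean oscillation of $|f|$ over $S$ into that of $|g|$ over $(0,|S|)$, and apply Theorem~\ref{th:1} to the decreasing function $g$. The difference lies entirely in the final step, which you correctly flag as the main obstacle but do not close, so the proposal is incomplete as it stands. The paper disposes of this step by a normalization made at the outset: it fixes $S$ and assumes $f$ is supported on $S$, so that the rearrangement being estimated is $f^\circ$ itself on the initial segment $(0,|S|)$ of $I_\Omega$; every $J\subset(0,|S|)$ is then a subinterval of $I_\Omega$, and $\dashint_J|f^\circ-(f^\circ)_J|\leq\norm{f^\circ}{\BMO{}{}(I_\Omega)}\leq c\norm{f}{\BMO{\cS}{}(\Omega)}$ follows at once from the hypothesis (in effect, the rearrangement bound is being invoked for the function cut off to $S$, which is the price of arguing shape by shape). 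Some identification of this kind between the local rearrangement $(f|_S)^\circ$ and the global one is indispensable; without it the hypothesis, which only speaks about $f^\circ$ on intervals of $I_\Omega$, never enters.

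Your proposed bridge does not supply this identification. For $J=(a,b)\subset I_S$ one can indeed produce $E\subset S$ with $|E|=|J|$ and $\dashint_J|g-g_J|=\dashint_E|f-f_E|$, where $E$ is essentially the level set $\{x\in S: g(b)<f(x)\leq g(a)\}$. But the set whose distribution is matched by an interval of $I_\Omega$ under $f^\circ$ is the \emph{global} level set $E'=\{x\in\Omega: g(b)<f(x)\leq g(a)\}$, which in general strictly contains $E$ and has larger measure; and mean oscillation is not monotone under passing to a subset of smaller measure unless one has a density bound $|E|\geq c|E'|$ (compare Proposition~\ref{pr:5}), which is unavailable here. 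The pointwise inequality $(f|_S)^\circ\leq f^\circ$ that you mention is likewise of no help for comparing oscillations. So the chain $\sup_{J\subset I_S}\dashint_J|g-g_J|\leq\norm{f^\circ}{\BMO{}{}(I_\Omega)}$ cannot be obtained by matching distributions alone, and the argument needs the paper's reduction (or an equivalent one) to go through.
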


\begin{proof}
Fix a shape $S\in\cS$ and assume that $f\in \BMO{\cS}{}(\Omega)$ is supported on $S$. 

Since $f$ is equimeasurable with ${f}^\circ$ by Lemma \ref{lm:4}, it follows from Lemma \ref{lm:1} that $|f|$ is equimeasurable with $|f^\circ|$ (recall that $|S|<\infty$). Writing ${E}=(0,|S|)$, by Lemma \ref{lm:3} we have that $|f^\circ|_{E}=|f|_{S}$ and also, then, that
$$
\int_{0}^{|S|}\!\big||f^\circ|-|f^\circ|_{{E}}\big| =\int_{0}^{|S|}\!\big||f^\circ|-|f|_{{S}}\big|=\int_{S}\!\big||f|-|f|_{S}\big|.
$$ 

Thus, by Theorem \ref{th:1},
$$
\dashint_{S}\!\big||f|-|f|_S\big|=\dashint_{(0,|S|)}\!\big||{f}^\circ|-|{f}^\circ|_{E}\big|\leq\sup_{J\subset{(0,|S|) }}\dashint_{J}\!|{f}^\circ-({f}^\circ)_{J}|.
$$
For all $J\subset{(0,|S|) }$, we have that
$$
\dashint_{J}\!|f^\circ-(f^\circ)_{J}|\leq \norm{f^\circ}{\BMO{}{}\big((0,|S|)\big) }\leq \norm{f^\circ}{\BMO{}{}(I_\Omega) }\leq c\norm{f}{\BMO{\cS}{}(\Omega)},
$$
and, therefore,
$$
\dashint_{S}\!\big||f|-|f|_S\big|\leq c\norm{f}{\BMO{\cS}{}(\Omega)}.
$$
Taking a supremum over all shapes $S\in\cS$ yields the result. 

\end{proof}

This result in turn allows us to prove the following relationship: 
$$
c_\ast(1,\cS)\leq c_{|\cdot|}(1,\cS)c_\circ(1,\cS)\leq c_\circ(1,\cS)^2.
$$

\begin{proposition}
\label{pr:16}
For any collection of shapes $\cS$, if $f\in \BMO{\cS}{}(\Omega)$ and $\norm{f^\circ}{\BMO{}{}(I_\Omega) }\leq c\norm{f}{\BMO{\cS}{}(\Omega)}$, then $\norm{f^\ast}{\BMO{}{}(I_\Omega) }\leq c^2\norm{f}{\BMO{\cS}{}(\Omega)}$. 
\end{proposition}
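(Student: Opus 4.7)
The plan is to exploit the identity $f^\ast=|f|^\circ$ noted just after the definition of the signed decreasing rearrangement, and then apply the hypothesis twice, once directly and once through Proposition~\ref{pr:7}.

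First I would invoke Proposition~\ref{pr:7}: the assumed norm inequality $\norm{f^\circ}{\BMO{}{}(I_\Omega)}\leq c\norm{f}{\BMO{\cS}{}(\Omega)}$ is precisely the hypothesis of that proposition, so we obtain
\[
\norm{|f|}{\BMO{\cS}{}(\Omega)}\leq c\norm{f}{\BMO{\cS}{}(\Omega)}.
\]
In particular, $|f|\in \BMO{\cS}{}(\Omega)$, and since $f$ satisfies the decay condition \eqref{condition}, so does $|f|$, so the signed decreasing rearrangement $|f|^\circ$ is well defined on $I_\Omega$.

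Next I would apply the hypothesis of the proposition to the function $|f|$ in place of $f$, yielding
\[
\norm{|f|^\circ}{\BMO{}{}(I_\Omega)}\leq c\norm{|f|}{\BMO{\cS}{}(\Omega)}\leq c\cdot c\,\norm{f}{\BMO{\cS}{}(\Omega)}=c^2\norm{f}{\BMO{\cS}{}(\Omega)}.
\]
Finally, recall from the discussion following the definition of $f^\circ$ that $|f|^\circ=f^\ast$, so the left-hand side is exactly $\norm{f^\ast}{\BMO{}{}(I_\Omega)}$, giving the claimed bound.

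There is no real obstacle here: the argument is essentially a composition observation. The only thing to verify is the admissibility of each step, namely that the hypothesis of the proposition was stated for arbitrary $f\in \BMO{\cS}{}(\Omega)$ (so it applies to $|f|$), and that $|f|^\circ=f^\ast$ in our setup. Both are immediate from the material already developed.
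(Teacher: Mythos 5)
Your proof is correct and is essentially identical to the paper's: both invoke Proposition~\ref{pr:7} to bound $\norm{|f|}{\BMO{\cS}{}}$, then apply the hypothesized rearrangement bound to $|f|$ and use $f^\ast=|f|^\circ$. Your explicit remark that the hypothesis must be read as holding for all functions (so that it applies to $|f|$) is a point the paper leaves implicit, but it matches the intended reading, namely that $c$ bounds the operator norm $c_\circ(1,\cS)$.
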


\begin{proof}
By Proposition \ref{pr:7}, it follows that 
$$
\norm{|f|}{\BMO{\cS}{}(\Omega) }\leq c \norm{f}{\BMO{\cS}{}(\Omega) }.
$$
Writing $f^\ast=|f|^\circ$, we have that 
$$
\norm{f^\ast}{\BMO{}{}(I_\Omega)} = \norm{|f|^\circ}{\BMO{}{}(I_\Omega)}\leq c\norm{|f|}{\BMO{\cS}{}(\Omega) }\leq c^2 \norm{f}{\BMO{\cS}{}(\Omega)}.
$$
\end{proof}

From these results, we see that a sharp result of the form $c_\circ(1,\cS)=1$ would immediately imply two more sharp results, $c_{|\cdot|}(1,\cS)=1$ and $c_\ast(1,\cS)=1$.

Although the dyadic cubes do not, in general, cover a domain, the space dyadic $\BMO{}{}$ has been extensively studied in the literature (see \cite{gar} for an early work illustrating its connection to martingales). In fact, many of the results in this section hold for that space; as such, extending our notation to include $\cS=\cQ_d$ even though it does not form a basis, we provide here a sample of the known sharp results.

Klemes' theorem was extended to the higher-dimensional dyadic case by Nikolidakis, who shows, for $\Omega=Q$, that $c_{\circ}(1,\cQ_d)\leq 2^n$:

\begin{theorem}[\cite{ni}]
\label{th:5}
If $f\in \BMO{\cQ_d}{}(Q)$, then $f^\circ\in\BMO{}{}(I_Q)$ and 
$$
\norm{f^\circ}{\BMO{}{}}\leq 2^n\norm{f}{\BMO{\cQ_d}{}}.
$$
\end{theorem}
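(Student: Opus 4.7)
My plan is to adapt the one-dimensional argument of Klemes behind Theorem~\ref{th:3} to the $n$-dimensional dyadic setting, replacing the Riesz rising-sun lemma (which has no analogue for arbitrary cubes) by the standard dyadic Calder\'on--Zygmund stopping-time decomposition. The constant $2^n$ should arise precisely from the dyadic parent-to-child volume ratio. Normalizing $f_Q = 0$, write $K = \norm{f}{\BMO{\cQ_d}{}}$, fix $J = (a,b)\subset I_Q$, and set $\lambda = f^\circ_J$. By replacing $f$ with $-f$ if needed (using the symmetry $(-f)^\circ(t) = -f^\circ(|Q|-t)$) I may assume $\lambda \geq 0$.

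Since $f^\circ$ is decreasing, there exists $c \in [a,b]$ with $f^\circ \geq \lambda$ on $(a,c)$ and $f^\circ \leq \lambda$ on $(c,b)$. Applying Lemma~\ref{mean} to $f^\circ$ on $J$ gives
\[
\dashint_J|f^\circ - \lambda| = \frac{2}{b-a}\int_a^c(f^\circ - \lambda)\,dt,
\]
and the Hardy--Littlewood identity $\int_0^t f^\circ(s)\,ds = \sup_{|E|=t,\,E\subset Q}\int_E f$, attained at $t = c$ for any $E$ sandwiched between $\{f > \lambda\}$ and $\{f \geq \lambda\}$, together with $\int_0^a(f^\circ - \lambda)\geq 0$, yields the transfer estimate
\[
\int_a^c(f^\circ - \lambda)\,dt \leq \int_{\{f > \lambda\}}(f - \lambda)\,dx.
\]

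To control the right-hand side, I apply the dyadic Calder\'on--Zygmund decomposition at the level $\lambda \geq f_Q$: let $\{Q_j\}\subset \cQ_d$ be the collection of maximal dyadic subcubes of $Q$ with $f_{Q_j} > \lambda$. By the Lebesgue differentiation theorem along the dyadic filtration, $\{f>\lambda\} \subset \bigcup_j Q_j$ up to a null set, and by maximality the dyadic parent $\hat Q_j$ satisfies $f_{\hat Q_j}\leq\lambda$. The key quantitative input is the \emph{dyadic jump estimate}
\[
0 < f_{Q_j} - \lambda \leq f_{Q_j} - f_{\hat Q_j} \leq \frac{|\hat Q_j|}{|Q_j|}\dashint_{\hat Q_j}\!|f - f_{\hat Q_j}| \leq 2^n K,
\]
in which the ratio $|\hat Q_j|/|Q_j| = 2^n$ is precisely the combinatorial origin of the constant in the theorem. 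Using the pointwise bound $(f-\lambda)_+ \leq (f - f_{Q_j})_+ + (f_{Q_j} - \lambda)$ on $Q_j$, together with a second application of Lemma~\ref{mean} giving $\int_{Q_j}(f - f_{Q_j})_+ = \tfrac12\int_{Q_j}|f - f_{Q_j}|\leq \tfrac{K}{2}|Q_j|$, each stopping cube contributes a controlled multiple of $K|Q_j|$ to $\int_{\{f>\lambda\}}(f - \lambda)$.

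The step I expect to be the main obstacle is the final bookkeeping: converting the sum $\sum_j |Q_j|$ of stopping-cube volumes into the factor $b - a$ in the denominator of $\dashint_J$, without losing more than the advertised $2^n$. The cleanest way to accomplish this, I believe, is to first reduce to left-anchored intervals $J' = (0,t)$ with $t\in[c,b]$, using the monotonicity of $f^\circ$ (so that the worst-case oscillation corresponds to $a\to 0$); for such intervals one has $\sum_j|Q_j|$ comparable to $|\{f>\lambda\}| = |\{f^\circ>\lambda\}|\leq c\leq t$, which is exactly the length $t - 0$ of $J'$. Once this reduction is secured, re-assembling the chain of inequalities produces $\dashint_J|f^\circ - \lambda|\leq 2^n K$, and taking the supremum over $J$ establishes the theorem.
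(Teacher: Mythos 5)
The paper does not actually prove this theorem---it is quoted from Nikolidakis \cite{ni}---so your proposal has to stand on its own, and it does not: the endgame that you yourself flag as ``the main obstacle'' is precisely where the argument fails. Your machinery up to that point is sound (the symmetry reduction to $\lambda\geq 0$, the transfer estimate via the Hardy--Littlewood identity, the maximal dyadic stopping cubes with the parent jump $f_{Q_j}-f_{\hat Q_j}\leq 2^nK$, and Lemma~\ref{mean} on each $Q_j$), but it leaves you with $\dashint_J|f^\circ-\lambda|\leq \frac{2}{b-a}\left(2^n+\frac12\right)K\sum_j|Q_j|$, and $\sum_j|Q_j|\geq|\{f>\lambda\}|$ bears no relation to $b-a$: for $J=(a,b)$ with $a$ large and $b-a$ small, $|\{f>\lambda\}|$ is of order $a$. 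The proposed repair---that for a decreasing function ``the worst-case oscillation corresponds to $a\to 0$,'' so one may restrict to left-anchored intervals---is false. Take $f^\circ=1$ on $(0,L-2)$ and $f^\circ=0$ on $(L-2,L)$ (the rearrangement of a characteristic function, so realizable here): the mean oscillation over $(L-4,L)$ equals $\frac12$, while the mean oscillation over every interval $(0,t)$ is at most $4(L-2)/L^2\to 0$ as $L\to\infty$. Even if the reduction were valid, your bookkeeping would produce $(2^{n+1}+1)K$ rather than $2^nK$, and the claim that $\sum_j|Q_j|$ is \emph{comparable} to $|\{f>\lambda\}|$ is only automatic in the direction $\geq$.

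The information you discard in the transfer estimate is exactly what is needed at the end. Writing $\int_a^c(f^\circ-\lambda)\,dt=\int_{E_c\setminus E_a}(f-\lambda)$, where $E_a\subset E_c$ are the optimizing sets of measures $a$ and $c$, the integration is really over a slab of measure $c-a\leq b-a$, whereas $\{f>\lambda\}$ has measure roughly $c$, which can be arbitrarily larger. A correct proof must select the dyadic cubes relative to this slab so that their total measure is controlled by $c-a$; this selection---which is what Riesz' rising sun lemma accomplishes for Klemes in one dimension and what the dyadic argument of \cite{ni} replaces it with---is the heart of the theorem, not the parent-to-child volume ratio alone, and it is the piece missing from your proposal.
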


As a corollary of Proposition \ref{pr:16} and Theorem \ref{th:5} we have the following, which shows that $c_{\ast}(1,\cQ_d)\leq 2^{n+1}$, an improvement on Theorem \ref{th:6}.

\begin{corollary}
If $f\in \BMO{\cQ_d}{}(Q)$, then $f^\ast\in\BMO{}{}(I_Q)$ and 
$$
\norm{f^\ast}{\BMO{}{}}\leq 2^{n+1}\norm{f}{\BMO{\cQ_d}{}}.
$$
\end{corollary}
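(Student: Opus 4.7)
The plan is to combine Theorem~\ref{th:5} with the elementary bound on the $\BMO{}{}$-norm of the absolute value coming from Proposition~\ref{pr:2}, exploiting the identity $f^\ast = |f|^\circ$.  A direct appeal to Proposition~\ref{pr:16} (as the statement ``corollary of Proposition~\ref{pr:16} and Theorem~\ref{th:5}" suggests) would yield the weaker constant $(2^n)^2 = 2^{2n}$, because Proposition~\ref{pr:16} routes the estimate on $|f|$ through Proposition~\ref{pr:7} with $c = c_\circ(1,\cQ_d)$.  To recover the sharper $2^{n+1}$ one should instead use the fact that $c_{|\cdot|}(1,\cS)\leq 2$ uniformly in $\cS$ and in the dimension.

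\medskip

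First, I would record the key identity $f^\ast = |f|^\circ$, valid because the two rearrangements coincide on nonnegative functions (see the discussion following the definition of $f^\circ$).  This shifts the problem to bounding $\norm{|f|^\circ}{\BMO{}{}(I_Q)}$ in terms of $\norm{f}{\BMO{\cQ_d}{}(Q)}$.

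\medskip

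Second, I would apply Proposition~\ref{pr:2} with $F(x) = |x|$ (so that $\alpha = 1$ and $L = 1$) and $p = 1$.  Taking the supremum of the resulting shapewise inequality over all $S\in\cQ_d$ gives
\[
\norm{|f|}{\BMO{\cQ_d}{}(Q)}\leq 2\,\norm{f}{\BMO{\cQ_d}{}(Q)}.
\]
In particular $|f|\in \BMO{\cQ_d}{}(Q)$, so Theorem~\ref{th:5} applies to the function $|f|$ and yields
\[
\norm{|f|^\circ}{\BMO{}{}(I_Q)}\leq 2^n\,\norm{|f|}{\BMO{\cQ_d}{}(Q)}.
\]

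\medskip

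Chaining the two estimates with $f^\ast = |f|^\circ$ produces
\[
\norm{f^\ast}{\BMO{}{}(I_Q)} \;=\; \norm{|f|^\circ}{\BMO{}{}(I_Q)} \;\leq\; 2^n\cdot 2\,\norm{f}{\BMO{\cQ_d}{}(Q)} \;=\; 2^{n+1}\,\norm{f}{\BMO{\cQ_d}{}(Q)},
\]
which is the desired inequality.  There is no real obstacle here; the only subtlety is to resist invoking Proposition~\ref{pr:16} as a black box, since the bound $c_{|\cdot|}(1,\cQ_d)\leq 2$ from Proposition~\ref{pr:2} is strictly better than $c_\circ(1,\cQ_d)\leq 2^n$ whenever $n\geq 2$, and it is this better bound that is responsible for the improvement from $2^{2n}$ to $2^{n+1}$.
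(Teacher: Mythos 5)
Your proof is correct, and it is in substance the argument the paper intends (the paper states the corollary without writing out a proof). Your diagnosis is also accurate: applying Proposition~\ref{pr:16} literally as a black box with $c = c_\circ(1,\cQ_d) \leq 2^n$ from Theorem~\ref{th:5} would only give $c_\ast(1,\cQ_d) \leq 2^{2n}$, because that proposition bounds $c_{|\cdot|}$ by $c_\circ$ via Proposition~\ref{pr:7}. The constant $2^{n+1}$ comes instead from the intermediate inequality $c_\ast(1,\cS) \leq c_{|\cdot|}(1,\cS)\,c_\circ(1,\cS)$ displayed just before Proposition~\ref{pr:16} (which is exactly the first step of that proposition's proof, $\norm{f^\ast}{\BMO{}{}} = \norm{|f|^\circ}{\BMO{}{}} \leq c_\circ \norm{|f|}{\BMO{\cQ_d}{}}$), combined with the dimension-free bound $c_{|\cdot|}(1,\cS) \leq 2$ from Proposition~\ref{pr:2} with $F(x) = |x|$. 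That is precisely the chain you assemble, so there is nothing to correct.
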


The previous discussion emphasized the situation when $p=1$. For $p=2$, even more powerful tools are available: using probabilistic methods, Stolyarov, Vasyunin and Zatitskiy prove the following sharp result.

\begin{theorem}[\cite{svz}]
$c_\ast(2,\cQ_d)=\frac{1+2^n}{2^{1+n/2}}$.
\end{theorem}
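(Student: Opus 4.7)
The plan is to prove equality by establishing matching upper and lower bounds on $c_\ast(2,\cQ_d)$. The upper bound is approached via the Bellman function method, which converts the global norm inequality into a pointwise concavity inequality for a cleverly chosen function of finitely many moment parameters, while the matching lower bound is obtained by an explicit self-similar extremizer.

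For the upper bound, fix a dyadic cube $Q$ and recall the standard martingale decomposition: if $Q'$ has dyadic children $\{Q'_i\}_{i=1}^{2^n}$, then
\[
\dashint_{Q'}\!|f-f_{Q'}|^2 = \frac{1}{2^n}\sum_{i=1}^{2^n}\dashint_{Q'_i}\!|f-f_{Q'_i}|^2 + \frac{1}{2^n}\sum_{i=1}^{2^n}(f_{Q'_i}-f_{Q'})^2.
\]
After normalising so that $\norm{f}{\BMO{\cQ_d}{2}}\leq 1$, seek a Bellman candidate $B(x_1,x_2)$ on the parabolic strip $\{(x_1,x_2):x_1^2\leq x_2\leq x_1^2+1\}$ satisfying a $2^n$-point concavity inequality tuned to the above identity, together with a boundary condition on $x_2=x_1^2+1$ that captures the extremal oscillation. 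Evaluating $B$ along the running-moment curve $t\mapsto\big(\dashint_{(0,t)}\!f^\ast,\dashint_{(0,t)}\!|f^\ast|^2\big)$ yields a pointwise bound on $\dashint_{(0,t)}|f^\ast-(f^\ast)_{(0,t)}|^2$. Since $f^\ast$ is decreasing, a short reduction (by translation-and-restriction, exploiting that a decreasing function on an interval $(a,b)$ behaves like a decreasing function on $(0,b-a)$) shows it suffices to consider intervals of the form $(0,t)$, yielding the bound $\norm{f^\ast}{\BMO{}{2}}\leq \frac{1+2^n}{2^{1+n/2}}\norm{f}{\BMO{\cQ_d}{2}}$.

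For the lower bound, iterate a one-step construction: split $Q$ into its $2^n$ dyadic children, assign value $a$ to a distinguished child and value $b$ to each of the remaining $2^n-1$ children, with $a,b$ calibrated so that the split contributes exactly to a prescribed variance. Recursing this procedure $k$ times in the distinguished child produces a function $f_k$ whose decreasing rearrangement $f_k^\ast$ is a step function on $(0,|Q|)$ with $k+1$ plateaus of geometrically thinning lengths. A direct computation of $\dashint_{(0,t_k)}|f_k^\ast-(f_k^\ast)_{(0,t_k)}|^2$ for optimally chosen $t_k$, together with the explicit value of $\norm{f_k}{\BMO{\cQ_d}{2}}^2$ coming from the calibration, produces in the limit $k\to\infty$ the ratio $\frac{(1+2^n)^2}{4\cdot 2^n}$, matching the square of the proposed sharp constant.

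The main obstacle is identifying and verifying the correct Bellman function: one must solve an associated Monge--Amp\`ere-type PDE on the parabolic strip subject to the discrete $2^n$-concavity constraint and a nonlinear obstacle condition, then confirm that the extremal trajectory along which the Bellman flow saturates coincides with the rearrangement curve of the self-similar extremizer above. This is precisely where the probabilistic viewpoint of \cite{svz} enters: the Bellman function is recognised as the value function of a stochastic optimal control problem over $2^n$-branching dyadic martingales, and verification theorems from that theory simultaneously produce the explicit formula for $B$, the sharp constant $\frac{1+2^n}{2^{1+n/2}}$, and a natural parameter certifying that the extremizers truly saturate.
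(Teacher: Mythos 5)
The paper offers no proof of this statement: it is quoted verbatim from \cite{svz}, so there is no internal argument to compare yours against. Your outline does correctly identify the strategy of that reference (a Bellman function on the parabolic strip $\{x_1^2\le x_2\le x_1^2+1\}$ adapted to the $2^n$-ary martingale identity, matched by a self-similar extremizer whose ratio tends to $\frac{(1+2^n)^2}{4\cdot 2^n}$), and the one-step identity you write for the dyadic children is correct. But as a proof the proposal has genuine gaps, and one step as stated would fail.

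First, the decisive content is missing: you never exhibit the Bellman candidate $B$, never verify its $2^n$-point concavity or the obstacle condition, and you explicitly defer ``the main obstacle'' to the probabilistic machinery of \cite{svz} itself --- which is the very result you are asked to prove, so the argument is circular. Second, the reduction of $\norm{f^\ast}{\BMO{}{2}(I_Q)}$ to intervals of the form $(0,t)$ is not justified by ``translation-and-restriction.'' The restriction of $f^\ast$ to $(a,b)$ is indeed a decreasing function on an interval of length $b-a$, but it is the decreasing rearrangement of $f$ restricted to a level set $\{x: f^\ast(b)<|f(x)|\le f^\ast(a)\}$, which is generally not a dyadic cube and carries no a priori $\BMO{\cQ_d}{2}$ bound; controlling the oscillation of $f^\ast$ over arbitrary subintervals is precisely the hard part of such theorems and cannot be waved away. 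Third, the lower-bound computation (the calibration of $a,b$, the value of $\norm{f_k}{\BMO{\cQ_d}{2}}$, the optimal $t_k$, and the limit) is asserted rather than carried out, and one must also check that the candidate extremizers have dyadic $\BMO{}{2}$ norm exactly (not merely at most) the calibrated value, since the supremum defining the norm ranges over all dyadic subcubes at all generations. Until these three items are supplied, the proposal is a plan consistent with \cite{svz} rather than a proof.
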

\section{{\bf Truncations}}

An immediate consequence of the bounds for the absolute value is the following result demonstrating that $\BMO{}{}$ is a lattice.

\begin{proposition}
\label{lm:2}
For any basis of shapes, if $f_1,f_2$ are real-valued functions in $\BMO{\cS}{p}(\Omega)$, then $\max(f_1,f_2)$ and $\min(f_1,f_2)$ are in  $\BMO{\cS}{p}(\Omega)$, with
$$
\norm{ \max(f_1,f_2) }{\BMO{\cS}{p}}\leq \frac{1+c_{|\cdot|}(p,\cS)}{2}\left( \norm{f_1}{\BMO{\cS}{p}}+\norm{f_2}{\BMO{\cS}{p}}\right).
$$
and
$$
\norm{ \min(f_1,f_2) }{\BMO{\cS}{p}}\leq \frac{1+c_{|\cdot|}(p,\cS)}{2}\left( \norm{f_1}{\BMO{\cS}{p}}+\norm{f_2}{\BMO{\cS}{p}}\right).
$$
\end{proposition}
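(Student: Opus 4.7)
The plan is to reduce everything to the absolute value by exploiting the standard lattice identities
\[
\max(f_1,f_2) = \frac{f_1+f_2 + |f_1-f_2|}{2}, \qquad \min(f_1,f_2) = \frac{f_1+f_2 - |f_1-f_2|}{2},
\]
which hold pointwise almost everywhere on $\Omega$. Since $\norm{\cdot}{\BMO{\cS}{p}}$ is a seminorm (it is sub\-additive and positively homogeneous, both of which follow immediately from Minkowski's inequality applied shape\-wise in \eqref{mosc}), the triangle inequality gives
\[
\norm{\max(f_1,f_2)}{\BMO{\cS}{p}} \leq \tfrac{1}{2}\bigl(\norm{f_1}{\BMO{\cS}{p}} + \norm{f_2}{\BMO{\cS}{p}} + \norm{|f_1-f_2|}{\BMO{\cS}{p}}\bigr),
\]
and the analogous bound for $\min(f_1,f_2)$.

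Next I would apply the definition of the constant $c_{|\cdot|}(p,\cS)$ from the previous section to control the last term: by that definition,
\[
\norm{|f_1-f_2|}{\BMO{\cS}{p}} \leq c_{|\cdot|}(p,\cS)\,\norm{f_1-f_2}{\BMO{\cS}{p}} \leq c_{|\cdot|}(p,\cS)\bigl(\norm{f_1}{\BMO{\cS}{p}} + \norm{f_2}{\BMO{\cS}{p}}\bigr),
\]
where in the second step I again invoke subadditivity. Substituting this into the previous display and factoring out $\tfrac{1}{2}$ yields exactly
\[
\norm{\max(f_1,f_2)}{\BMO{\cS}{p}} \leq \frac{1+c_{|\cdot|}(p,\cS)}{2}\bigl(\norm{f_1}{\BMO{\cS}{p}} + \norm{f_2}{\BMO{\cS}{p}}\bigr),
\]
and the same bound for $\min(f_1,f_2)$ with the minus sign absorbed by the triangle inequality. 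In particular, finiteness of $c_{|\cdot|}(p,\cS)$ (guaranteed by Proposition~\ref{pr:2}, which gives $c_{|\cdot|}(p,\cS)\leq 2$) ensures that the right-hand side is finite, so $\max(f_1,f_2),\min(f_1,f_2)\in\BMO{\cS}{p}(\Omega)$.

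There is no real obstacle here; the proof is essentially a one-line algebraic manipulation combined with the seminorm property and the definition of $c_{|\cdot|}(p,\cS)$. The only thing worth double-checking is that the functions are real-valued (so that $\max$ and $\min$ make sense) and that the pointwise identities above are valid a.e., both of which are explicit hypotheses or trivial.
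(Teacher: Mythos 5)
Your proof is correct and is essentially identical to the paper's: both use the lattice identities $\max(f_1,f_2)=\tfrac{1}{2}\bigl((f_1+f_2)+|f_1-f_2|\bigr)$ and $\min(f_1,f_2)=\tfrac{1}{2}\bigl((f_1+f_2)-|f_1-f_2|\bigr)$, the seminorm triangle inequality, and the bound $\norm{|f_1-f_2|}{\BMO{\cS}{p}}\leq c_{|\cdot|}(p,\cS)\bigl(\norm{f_1}{\BMO{\cS}{p}}+\norm{f_2}{\BMO{\cS}{p}}\bigr)$. No issues.
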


\begin{proof}
This follows from writing 
$$
\max(f_1,f_2)=\frac{(f_1+f_2)+ |f_1-f_2| }{2}\;\;\;\text{and}\;\;\; \min(f_1,f_2)=\frac{(f_1+f_2)- |f_1-f_2| }{2}
$$
and using the estimate for the absolute value:
$$
\norm{|f_1-f_2|}{\BMO{\cS}{p}} \leq c_{|\cdot|}(p,\cS)(\norm{f_1}{\BMO{\cS}{p}}+\norm{f_2}{\BMO{\cS}{p}}).
$$
\end{proof}

In particular, applying Theorems~\ref{thm-absvalI} and \ref{thm-absvalR}, this yields the sharp constant $1$ for $p=1$ when $\cS=\cI$ or $\cR$. 

We can also obtain the sharp constant $1$ via a sharp shapewise inequality for the cases $p=1$ and $p=2$, regardless of the basis.  The proof of the case $p = 2$ in the following result is given by Reimann and Rychener \cite{rr} for the basis $\cS = \cQ$.

\begin{proposition}
\label{pr:17}
Let $p = 1$ or $p = 2$.  
Let $\cS$ be any basis of shapes and fix a shape $S\in\cS$.  If $f_1,f_2\in\BMO{\cS}{p}(\Omega)$ and $f=\max(f_1,f_2)$ or $f=\min(f_1,f_2)$, we have
\begin{equation}
\label{eqn-max}
\dashint_{S}\!|f-f_{S}|^p
\leq \dashint_{S}|f_1-(f_1)_{S}|^p+\dashint_{S}|f_2-(f_2)_{S}|^p.
\end{equation} 
Consequently
$$
\norm{ \max(f_1,f_2) }{\BMO{\cS}{p}}\leq \norm{f_1}{\BMO{\cS}{p}}+\norm{f_2}{\BMO{\cS}{p}}
$$
and
$$
\norm{ \min(f_1,f_2) }{\BMO{\cS}{p}}\leq \norm{f_1}{\BMO{\cS}{p}}+\norm{f_2}{\BMO{\cS}{p}}.
$$
\end{proposition}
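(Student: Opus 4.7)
The norm inequalities follow readily from the shapewise inequality \eqref{eqn-max}. For $p = 1$, taking the supremum over $S \in \cS$ gives
$\norm{f}{\BMO{\cS}{1}} \leq \norm{f_1}{\BMO{\cS}{1}} + \norm{f_2}{\BMO{\cS}{1}}$. For $p = 2$, the bound $(a+b)^{1/2} \leq a^{1/2} + b^{1/2}$ for $a,b \geq 0$, applied to the square roots of both sides of \eqref{eqn-max}, gives the norm inequality after taking a supremum. Moreover, the identity $\min(f_1,f_2) = -\max(-f_1,-f_2)$ together with invariance of the $p$-mean oscillation under $f \mapsto -f$ reduces the min case to the max case. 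So the entire proposition will follow once I establish \eqref{eqn-max} for $f = \max(f_1,f_2)$.

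For $p=2$, I plan to use the Hilbert-space characterization from Proposition~\ref{pr:12}: $f_S$ is the unique minimizer of $c \mapsto \dashint_S |f-c|^2$. Setting $c_\ast = \max((f_1)_S,(f_2)_S)$, it suffices to bound $\dashint_S |f - c_\ast|^2$. The elementary pointwise inequality
$$|\max(a,b) - \max(c,d)| \leq \max(|a-c|,|b-d|),$$
which one verifies by a short case analysis on which of $a,b$ and which of $c,d$ achieves the maximum, combined with $\max(x,y)^2 \leq x^2 + y^2$, yields
$$|\max(f_1,f_2) - c_\ast|^2 \leq |f_1 - (f_1)_S|^2 + |f_2 - (f_2)_S|^2$$
pointwise. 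Integrating gives \eqref{eqn-max} for $p=2$.

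For $p=1$, the key observation is the monotonicity
$$f_S \;=\; \dashint_S \max(f_1,f_2) \;\geq\; \max\!\big((f_1)_S,\,(f_2)_S\big),$$
which follows at once from $\max(f_1,f_2) \geq f_i$ pointwise. Using Lemma~\ref{mean} I rewrite
$\dashint_S |f - f_S| = \frac{2}{|S|}\int_{\{f > f_S\}}(f - f_S)$ and partition $\{f > f_S\} \cap S$ into $A = \{f > f_S\}\cap\{f_1 \geq f_2\}$ and $B = \{f > f_S\}\cap\{f_2 > f_1\}$. On $A$, $f = f_1$, and since $f_S \geq (f_1)_S$, I get both $A \subset \{f_1 > (f_1)_S\}$ and $f - f_S = f_1 - f_S \leq f_1 - (f_1)_S$; similarly for $B$ with $f_2$. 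Hence
$$\int_{\{f>f_S\}}(f - f_S) \;\leq\; \int_{\{f_1 > (f_1)_S\}}(f_1 - (f_1)_S) \;+\; \int_{\{f_2 > (f_2)_S\}}(f_2 - (f_2)_S),$$
and a second application of Lemma~\ref{mean} to each term on the right gives precisely $\frac{|S|}{2}\big(\dashint_S|f_1 - (f_1)_S| + \dashint_S|f_2 - (f_2)_S|\big)$, yielding \eqref{eqn-max}.

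The main obstacle is the $p=1$ case: Proposition~\ref{pr:6} only lets us replace $f_S$ by an arbitrary constant at the cost of a factor $2$, which is fatal here since we want constant $1$. The argument above sidesteps this by exploiting Lemma~\ref{mean} both to remove the absolute value (gaining the useful identity with a factor $2$ on the front) and to convert the pointwise bound into a shapewise bound. The monotonicity $f_S \geq \max((f_1)_S,(f_2)_S)$ is precisely what makes the pointwise estimate on each of $A$ and $B$ go through with the correct constant.
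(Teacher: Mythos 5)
Your proof is correct. For $p=1$ your argument is essentially the one in the paper: both rest on Lemma~\ref{mean}, the monotonicity of the mean ($f_S\geq\max((f_1)_S,(f_2)_S)$ for the max, the reverse for the min), and a partition according to which $f_i$ realizes the extremum; the paper treats $\min$ directly and remarks that $\max$ is symmetric, while you treat $\max$ and reduce $\min$ via $f\mapsto -f$, which is an equally valid bookkeeping choice. For $p=2$ your route differs from the paper's, which reproduces the Reimann--Rychener argument: there one compares $f=\max(f_1,f_2)$ to the larger mean $(f_1)_S$ and splits $S$ into three sets $S_1,S_2,S_3$ according to sign conditions, checking on each piece that $|f-(f_1)_S|$ is dominated by one of $|f_i-(f_i)_S|$, before invoking Proposition~\ref{pr:12}. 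You instead use the clean pointwise inequality $|\max(a,b)-\max(c,d)|\leq\max(|a-c|,|b-d|)$ with $c=(f_1)_S$, $d=(f_2)_S$, followed by $\max(u,v)^2\leq u^2+v^2$, and then Proposition~\ref{pr:12}; this compresses the case analysis into a reusable elementary lemma and avoids the WLOG normalization, at the cost of being slightly less tight pointwise (you bound by the max of both oscillations everywhere rather than by a single one on each piece), which is harmless since the final estimate is the same. You also correctly identify why this $p=2$ strategy cannot be recycled for $p=1$: replacing $f_S$ by the constant $\max((f_1)_S,(f_2)_S)$ is only free when $p=2$.
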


\begin{proof}
First, for $p = 1$, fix a shape $S\in\cS$ and let $f=\min(f_1,f_2)$.  Note that $f_S \leq (f_i)_S$ as $f\leq f_i$ on $S$  for $i = 1,2$. 
Let 
$$E_1 = \{x \in S: f_1(x) \leq f_2(x)\} =  \{x \in S: f(x) = f_1(x)\}, \quad E_2 = S \setminus E_1.$$
By Lemma \ref{mean},
\begin{eqnarray*}
\dashint_{S}|f(x)-f_S|\,dx
&=& \frac{2}{|S|}\int_{\{x\in{S}:f(x)<f_S \} }(f_S-f(x))\,dx\\
&=& \frac{2}{|S|} \sum_{i = 1}^2\int_{\{x\in{E_i}:f_i(x)<f_S \} }(f_S-f_i(x))\,dx\\
&\leq &\frac{2}{|S|} \sum_{i = 1}^2\int_{\{x \in S:f_i(x)<(f_i)_S \} }((f_i)_S-f_i(x))\,dx\\
&=&\dashint_{S}|f_1-(f_1)_{S}|+\dashint_{S}|f_2-(f_2)_{S}|.
\end{eqnarray*}
For $f=\max(f_1,f_2)$, the previous arguments follow in a similar way, except that we apply Lemma \ref{mean} to write the mean oscillation in terms of an integral over the set $\{x\in{S}:f(x)>f_S \}$.

For $p = 2$, we include, for the benefit of the reader, the proof from \cite{rr}, with cubes replaced by shapes.  Let $f=\max(f_1,f_2)$.  We may assume without loss of generality that $(f_1)_S  \geq (f_2)_S$.  Consider the sets 
$$S_1=\{x\in{S}:f_{2}(x)<f_1(x) \}, \quad S_2=\{x\in S: f_2(x) \geq f_1(x) \mbox{ and } f_2(x)\geq(f_1)_{S}\}$$
and
$$S_3=S\setminus(S_1\cup S_2) = \{x\in{S}:f_1(x)  \leq f_{2}(x)<(f_1)_{S}\}.$$
 Then
\[
\begin{split}
\int_{S}\!|f-(f_1)_{S}|^2
&=\int_{S_1}\!|f_1(x)-(f_1)_{S}|^2\,dx+\int_{S_2}\!|f_2(x)-(f_1)_{S}|^2\,dx + \int_{S_3}\!|f_2(x)-(f_1)_{S}|^2\,dx\\
&\leq\int_{S_1}\!|f_1(x)-(f_1)_{S}|^2\,dx+\int_{S_2}\!|f_2(x)-(f_2)_{S}|^2\,dx + \int_{S_3}\!|f_1(x)-(f_1)_{S}|^2\,dx\\
&\leq\int_{S}\!|f_1(x)-(f_1)_{S}|^2\,dx+\int_{S}\!|f_2(x)-(f_2)_{S}|^2\,dx\\
\end{split}
\]
Using Proposition~\ref{pr:12} and dividing by $|S|$ gives \eqref{eqn-max}. Similarly, the result can be shown for $\min(f_1,f_2)$.
\end{proof}

For a real-valued measurable function $f$ on $\Omega$, define its truncation from above at level $k \in \R$ by
$$f^{k} = \min(f,k)$$
and its truncation from below at level $j \in \R$ as
$$
f_{j}=\max(f,j).
$$

We use the preceding propositions to prove boundedness of the upper and lower truncations on $\BMO{\cS}{p}(\Omega)$.

\begin{proposition}
\label{pr:15}
Let $\cS$ be any basis of shapes and fix a shape $S\in\cS$. 
If $f\in\BMO{\cS}{p}(\Omega)$ then for all $k, j \in \R$, 
\begin{equation}
\label{eqn-shapewiseTr}
\max\left(\dashint_{S}|f^k(x)-(f^k)_S|\,dx,  \dashint_{S}|f_j(x)-(f_j)_S|\,dx \right)\leq c\dashint_{S}|f(x)-f_S|\,dx
\end{equation}
where
$$
c=\begin{cases}
1,&p=1\,\text{or}\,\,p=2\\
\min\left(2, \frac{1+c_{|\cdot|}(p,\cS)}{2}\right),&\text{otherwise}.
\end{cases}
$$
Consequently
$$\norm{f^{k}}{\BMO{\cS}{p}}\leq c\norm{f}{\BMO{\cS}{p}} \mbox{ and } \norm{f_{j}}{\BMO{\cS}{p}}\leq c\norm{f}{\BMO{\cS}{p}}.$$ 
\end{proposition}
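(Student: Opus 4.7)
The plan is to reduce truncation to a $\max$ or $\min$ of two functions, one of which is constant, and then invoke the tools already established in the paper. The key observation is that $f^{k}=\min(f,k)$ and $f_{j}=\max(f,j)$, where the second argument is constant and therefore has zero $p$-mean oscillation on every shape. This makes the estimates for $\max$ and $\min$ collapse to estimates for truncations.

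For the cases $p=1$ and $p=2$, I would apply Proposition~\ref{pr:17} directly with $f_{1}=f$ and $f_{2}\equiv k$. Since $k-k_{S}=0$ identically, the second term in \eqref{eqn-max} vanishes and we obtain the shapewise bound
\[
\dashint_{S}|f^{k}-(f^{k})_{S}|^{p}\leq \dashint_{S}|f-f_{S}|^{p},
\]
which is \eqref{eqn-shapewiseTr} with $c=1$. Passing to the supremum over shapes gives the claimed norm inequality.

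For general $1\leq p<\infty$, I would establish the bound $c=2$ through Proposition~\ref{pr:2}. The map $F(x)=\min(x,k)$ is $1$-Lipschitz ($\alpha=L=1$), so \eqref{eqn-Holderp} applied to $F\circ f=f^{k}$ yields the shapewise estimate with constant $2$. To obtain the sharper bound $(1+c_{|\cdot|}(p,\cS))/2$, I would use the pointwise identity
\[
f^{k}=\min(f,k)=\frac{(f+k)-|f-k|}{2},
\]
treat $f_{2}\equiv k$ in Proposition~\ref{lm:2} (so that $\norm{f_{2}}{\BMO{\cS}{p}}=0$ and $\norm{f-k}{\BMO{\cS}{p}}=\norm{f}{\BMO{\cS}{p}}$), and read off the bound. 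Taking the smaller of these two constants gives the stated $c$ in the ``otherwise'' case.

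For $f_{j}$, I would observe the identity $f_{j}=\max(f,j)=-\min(-f,-j)$ together with the fact that the $p$-mean oscillation, and hence $\norm{\cdot}{\BMO{\cS}{p}}$, is invariant under $f\mapsto -f$; this reduces the lower truncation bound to the upper truncation bound already proved. The main obstacle is really just organizational: Proposition~\ref{pr:17} is shapewise but restricted to $p\in\{1,2\}$, while the $(1+c_{|\cdot|}(p,\cS))/2$ estimate is intrinsically a norm estimate (since $c_{|\cdot|}$ is defined as a norm constant). One must therefore be careful to state \eqref{eqn-shapewiseTr} for the cases where a genuine shapewise inequality is available and to reserve the $c_{|\cdot|}$-dependent bound for the consequential norm statement.
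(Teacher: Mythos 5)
Your proposal is correct and follows essentially the same route as the paper: Proposition~\ref{pr:17} with a constant second argument for $p=1,2$, Proposition~\ref{lm:2} for the $(1+c_{|\cdot|}(p,\cS))/2$ bound, and the reduction $f_j=-\min(-f,-j)$. The only cosmetic difference is that you obtain the constant $2$ from the Lipschitz estimate of Proposition~\ref{pr:2}, whereas the paper uses Proposition~\ref{pr:9} together with the pointwise bound $|f^k(x)-f^k(y)|\leq|f(x)-f(y)|$; both yield the same constant, and your remark that the $c_{|\cdot|}$-dependent bound is genuinely a norm (not shapewise) estimate is a fair and accurate caveat.
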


\begin{proof}
For the truncation from above, observing that $|f^{k}(x)-f^{k}(y)|\leq |f(x)-f(y)|$ almost everywhere and for any $k$, Proposition \ref{pr:9} gives \eqref{eqn-shapewiseTr} with $c = 2$. On the other hand, applying Proposition \ref{lm:2} to  $f^{k}=\min(f,k)$ and using the fact that constant functions have zero mean oscillation, we get the constant
$c =  \frac{1+c_{|\cdot|}(p,\cS)}{2}$.
These calculations hold for any $p\geq{1}$. 

For $p=1$ and $p = 2$, we are able to strengthen this by deriving the sharp shapewise inequality, namely \eqref{eqn-shapewiseTr} with $c = 1$, from \eqref{eqn-max} in Proposition \ref{pr:17} and the fact that constants have zero mean oscillation.
Alternatively, for $p=2$, fix a shape $S\in\cS$ and assume, without loss of generality, that $f_S=0$. Then, by Proposition \ref{pr:12},
$$
\dashint_{S}\!|f^{(k)}-(f^{(k)})_S|^2\leq\dashint_{S}\!|f^{(k)}-f_S|^2=\dashint_{S}\!|f^{(k)}|^2\leq\dashint_{S}\!|f|^2=\dashint_{S}\!|f-f_S|^2.
$$
The calculations for the truncation from below are analogous or can be derived by writing $f_j = -\min(-f,-j)$.
\end{proof}

For $p=1$ and $p=2$, this is clearly a sharp result: take any bounded function and either $k>\sup f$ or $j< \inf f$.

Also note that for $p = 1$, the sharp shapewise inequalities \eqref{eqn-max} and \eqref{eqn-shapewiseTr} give the sharp shapewise inequality \eqref{eqn-Holderp} (with constant $2$)
for the absolute value by writing
$$|f| = f_+ + f_- = \max(f_+, f_-), \quad f_+ = f_0, \; f_- = -(f^0).$$

For a measurable function $f$ on $\Omega$, define its (full) truncation at level $k$ as
$$
\Tr{(f,k)}(x) =\begin{cases}
k,& f(x)>k\\
f(x),&-k\leq f(x)\leq k\\
-k, &f(x)<-k.
\end{cases}
$$
Note that $\Tr{(f,k)}=(f^k)_{-k}=(f_{-k})^{k}$ and that $\Tr{(f,k)}\in \Linfty(\Omega)$ for each $k$. Moreover, $\Tr{(f,k)}\rightarrow{f}$ pointwise and in $\Loneloc(\Omega)$ as $k\rightarrow\infty$ (\cite{jo}). 

For a function $f\in \BMO{}{}(\Omega)$, it is not true that $\Tr{(f,k)}\rightarrow{f}$ in $\BMO{}{}(\Omega)$, unless $f\in\VMO{}{}(\Omega)$, the space of functions of vanishing mean oscillation (\cite{bn}). 
Nonetheless, as shown in Corollary \ref{co:2} below,  $\norm{\Tr{(f,k)}}{\BMO{}{}}\rightarrow\norm{f}{\BMO{}{}}$ (as mentioned without proof in \cite{st,ss}), and in fact for any basis $\cS$.

\begin{problem}
Does there exist a constant $c$ such that, for all $k$ and for all $f\in \BMO{\cS}{p}(\Omega)$, $\norm{\Tr{(f,k)}}{\BMO{\cS}{p}}\leq c\norm{f}{\BMO{\cS}{p}}$, and if so, what is the smallest constant $c_{T}(p,\cS)$ for which this holds?
\end{problem}

If $c_{T}(p,\cS)$ exists, then clearly $c_{T}(p,\cS)\geq{1}$. The results above provide a positive answer to the first question and some information about $c_{T}(p,\cS)$.

\begin{corollary}
\label{co:2}
For any choice of shapes, if $f\in\BMO{\cS}{p}(\Omega)$ then $\Tr{(f,k)}\in \BMO{\cS}{p}(\Omega)$ for all $k$ and $\norm{\Tr{(f,k)}}{\BMO{\cS}{p}}\leq c\norm{f}{\BMO{\cS}{p}}$, where
$$
c=\begin{cases}
1,&p=1\,\text{or}\,\,p=2\\
\min\left(2, \left(\frac{1+c_{|\cdot|}(p,\cS)}{2}\right)^2\right),&\text{otherwise}
\end{cases}.
$$ 
Moreover, for $p = 1$ or $p = 2$, $\norm{\Tr{(f,k)}}{\BMO{\cS}{p}} \ra \norm{f}{\BMO{\cS}{p}}$ as $k \ra \infty$.
\end{corollary}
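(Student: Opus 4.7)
The plan is to deduce this corollary directly from Proposition~\ref{pr:15} combined with the decomposition $\Tr{(f,k)} = (f^k)_{-k}$, and then to obtain the limit statement from a shapewise dominated convergence argument.

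For the uniform $\BMO{\cS}{p}$ bound, I would establish the two candidate constants in the minimum separately. The factor $\left(\frac{1+c_{|\cdot|}(p,\cS)}{2}\right)^2$ arises from two successive applications of Proposition~\ref{pr:15}: first to control the upper truncation $f^k$ in terms of $f$, then to control the lower truncation $(f^k)_{-k}$ in terms of $f^k$, each application costing a factor of $\tfrac{1+c_{|\cdot|}(p,\cS)}{2}$. The factor $2$ is obtained independently by a direct shapewise argument: since full truncation is $1$-Lipschitz on $\R$, we have $|\Tr{(f,k)}(x) - \Tr{(f,k)}(y)| \leq |f(x) - f(y)|$ pointwise, and combining the upper bound of Proposition~\ref{pr:9} applied to $\Tr{(f,k)}$ with the lower bound of the same proposition applied to $f$ gives
\[
\left(\dashint_S |\Tr{(f,k)} - (\Tr{(f,k)})_S|^p\right)^{1/p} \leq 2\left(\dashint_S |f - f_S|^p\right)^{1/p}.
\]
For $p=1$ or $p=2$, Proposition~\ref{pr:15} provides the sharp shapewise factor $1$ at each step, so the composition of the two truncations also yields constant $1$.

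For the convergence of norms when $p=1$ or $p=2$, the uniform bound just proved immediately gives $\limsup_{k\to\infty}\norm{\Tr{(f,k)}}{\BMO{\cS}{p}} \leq \norm{f}{\BMO{\cS}{p}}$. For the matching lower bound, I would argue shapewise: fix $S\in\cS$, and note that since $f \in \Lploc(\Omega)$ by Lemma~\ref{pr:18}, $|\Tr{(f,k)}| \leq |f|$ on $S$, and $\Tr{(f,k)} \to f$ pointwise, the dominated convergence theorem yields $\Tr{(f,k)} \to f$ in $L^p(S)$. This forces $(\Tr{(f,k)})_S \to f_S$, so the $p$-mean oscillation of $\Tr{(f,k)}$ on $S$ converges to that of $f$ on $S$, giving
\[
\left(\dashint_S |f - f_S|^p\right)^{1/p} = \lim_{k\to\infty}\left(\dashint_S |\Tr{(f,k)} - (\Tr{(f,k)})_S|^p\right)^{1/p} \leq \liminf_{k\to\infty}\norm{\Tr{(f,k)}}{\BMO{\cS}{p}}.
\]
Taking the supremum over $S \in \cS$ on the left produces $\norm{f}{\BMO{\cS}{p}} \leq \liminf_k \norm{\Tr{(f,k)}}{\BMO{\cS}{p}}$, which together with the limsup bound gives the desired convergence.

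The main anticipated difficulty is purely one of bookkeeping: tracking which constant comes from which proposition so that the composition produces exactly $\left(\frac{1+c_{|\cdot|}(p,\cS)}{2}\right)^2$ rather than something larger, and confirming that the sup-over-$S$ / liminf-in-$k$ exchange in the convergence step is justified (which it is, since the left-hand side is a pointwise-in-$S$ limit being bounded above by the shapewise suprema). All other steps are direct consequences of already-established results.
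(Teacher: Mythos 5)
Your proposal is correct and follows essentially the same route as the paper: the decomposition $\Tr{(f,k)}=(f^k)_{-k}$ with two applications of Proposition~\ref{pr:15}, the factor $2$ from the Lipschitz bound $|\Tr{(f,k)}(x)-\Tr{(f,k)}(y)|\leq|f(x)-f(y)|$ via Proposition~\ref{pr:9}, and a shapewise limit argument for the norm convergence. The only (immaterial) difference is that you use dominated convergence to get $L^p(S)$ convergence of the truncations, whereas the paper sandwiches the shapewise oscillations between a Fatou lower bound and the $c=1$ shapewise inequality; your version also makes explicit the final sup-over-$S$/liminf exchange that the paper leaves implicit.
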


\begin{proof} Writing $\Tr{(f,k)}=(f^k)_{-k}$ and applying inequality \eqref{eqn-shapewiseTr} in Proposition \ref{pr:15} gives us the shapewise inequality
\begin{equation}
\label{eqn-twosidedTr}
\dashint_{S}\!|\Tr{(f,k)}-(\Tr{(f,k)})_S|^p\leq c\dashint_{S}\!|f-f_S|^p
\end{equation}
with constant $c = 1$ for $p=1$ and $p=2$, and $c \leq \left(\min\Big(2,\frac{1+c_{|\cdot|}(p,\cS)}{2}\Big)\right)^2$.   We
get $c \leq 2$ (as opposed to $2^2$) by applying  Proposition~\ref{pr:9}  directly with the estimate $|\Tr{(f,k)}(x)-\Tr{(f,k)}(y)|\leq|f(x)-f(y)|$. 

For the convergence of the norms in the case $p=1$ and $p=2$, we have that $|\Tr{(f,k)}| \leq |f| \in \Lone(S)$ implies $(\Tr{(f,k)})_S \ra f_S$, and since $\Tr{(f,k)} \ra f$
pointwise a.e.\ on $S$, we can apply Fatou's lemma and \eqref{eqn-twosidedTr} with $c = 1$ to get
\[
\begin{split}
\dashint_{S}\!|f-f_S|^p
&\leq \liminf_{k \ra \infty} \dashint_{S}\!|\Tr{(f,k)}-(\Tr{(f,k)})_S|^p\\
&\leq\limsup_{k \ra \infty} \dashint_{S}\!|\Tr{(f,k)}-(\Tr{(f,k)})_S|^p \leq \dashint_{S}\!|f-f_S|^p.\\
\end{split}
\]
\end{proof}

This result gives the sharp constant for $p=1,2$ and an upper bound for $c_{T}(p,\cS)$. Of course, the known upper bound $c_{|\cdot|}(p,\cS)\leq 2$ implies 
$$
\left(\frac{1+c_{|\cdot|}(p,\cS)}{2}\right)^2\leq \frac{9}{4}
$$
(which appears, for example, in Exercise 3.1.4 in \cite{gra}), but this is worse than the truncation bound $c_{T}(p,\cS)\leq 2$.  On the other hand, if $c_{|\cdot|}(p,\cS)\leq 2\sqrt{2}-1$, then the bound depends on $c_{|\cdot|}(p,\cS)$. In particular, a result of $c_{|\cdot|}(p,\cS)=1$ would imply that $c_{T}(p,\cS)=1$. 
 
\section{{\bf The John-Nirenberg inequality}}

We now come to the most important inequality in the theory of $\BMO{}{}$, originating in the paper of John and Nirenberg \cite{jn}.

\begin{definition}  Let $X \subset \Rn$ be a set of finite Lebesgue measure.  We say that $f \in L^1(X)$ satisfies the John-Nirenberg inequality on $X$ if  there exist constants 
$c_1,c_2>0$ such that
\begin{equation}
\label{decay}
|\{x\in X:|f(x)-f_{X}|>\alpha \}|\leq c_1|X|e^{-c_2\alpha},\qquad \alpha>0.
\end{equation}
\end{definition}

The following is sometimes referred to as the John-Nirenberg Lemma.

\begin{theorem}[\cite{jn}]
\label{thm-jn}
If $X = Q$, a cube in $\Rn$, then there exist constants $c$ and $C$ such that for all $f \in \BMO{}{}(Q)$, \eqref{decay} holds with $c_1 = c$, $c_2 = C/\norm{f}{\BMO{}{}}$.
\end{theorem}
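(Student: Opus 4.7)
The plan is to run the classical Calder\'on--Zygmund stopping-time argument iteratively on the dyadic subdivision of $Q$. By homogeneity of $\norm{\cdot}{\BMO{}{}}$, I would first reduce to the case $\norm{f}{\BMO{}{}}=1$, so that $\dashint_{Q'}|f-f_{Q'}|\leq 1$ on every axis-parallel subcube $Q'\subset Q$, and in particular on every dyadic subcube of $Q$.

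Next, fix a threshold $b>1$ to be optimized later. Starting from $Q$, where $\dashint_Q|f-f_Q|\leq 1 < b$, I would select the maximal dyadic subcubes $\{Q_j^{(1)}\}$ of $Q$ on which $\dashint_{Q_j^{(1)}}|f-f_Q|>b$. Maximality forces $b<\dashint_{Q_j^{(1)}}|f-f_Q|\leq 2^n b$ (the dyadic parent has mean at most $b$), Lebesgue differentiation gives $|f(x)-f_Q|\leq b$ for a.e.\ $x\notin\bigcup_j Q_j^{(1)}$, summing the lower bound yields the packing estimate $\sum_j |Q_j^{(1)}|\leq b^{-1}|Q|$, and the upper bound yields $|f_{Q_j^{(1)}}-f_Q|\leq 2^n b$.

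The iteration step is to apply the same construction on each $Q_j^{(1)}$ to $f-f_{Q_j^{(1)}}$, which still has BMO norm at most $1$, and to continue $k$ times. Telescoping the ancestral means along the resulting tree and packing the subcubes at each generation gives that the set where $|f-f_Q|>k\cdot 2^n b$ is contained (up to measure zero) in a union of cubes of total measure $\leq b^{-k}|Q|$. Writing $F(\alpha)=|\{x\in Q:|f(x)-f_Q|>\alpha\}|$ and choosing $k=\lfloor \alpha/(2^n b)\rfloor$ produces the exponential bound $F(\alpha)\leq b\cdot|Q|\cdot e^{-\alpha(\log b)/(2^n b)}$; rescaling back by $\norm{f}{\BMO{}{}}$ yields \eqref{decay} with $c_1=b$ and $c_2=(\log b)/(2^n b\,\norm{f}{\BMO{}{}})$, and taking $b=e$ gives concrete constants.

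The main obstacle I anticipate is the bookkeeping of the iteration: I need to verify that the BMO control inherited by each generation-$k$ subcube is no worse than the original (so that the same threshold $b$ can be reused at every level) and that the chain of telescoped ancestral means produces exactly a linear lower bound $k\cdot 2^n b$ on $|f-f_Q|$ outside the $k$-th generation cubes. Both points are standard for the basis $\cQ$, since every dyadic subcube of $Q$ belongs to $\cQ$ and therefore inherits the oscillation bound with no loss, but they must be set up carefully so that the geometric measure decay exponentiates into the desired inequality.
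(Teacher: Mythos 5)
Your proposal is correct: it is the classical Calder\'on--Zygmund stopping-time proof of the John--Nirenberg lemma, with the right packing estimate $\sum_j|Q_j^{(1)}|\leq b^{-1}|Q|$, the right jump bound $|f_{Q_j^{(1)}}-f_Q|\leq 2^n b$ from maximality, and a valid optimization over $k$ and $b$ (note only that in your last paragraph the telescoped means give an \emph{upper} bound $k\cdot 2^n b$ on $|f-f_Q|$ outside the generation-$k$ cubes, not a lower bound). The paper states this theorem without proof, citing \cite{jn}, and your argument is precisely the standard one underlying that citation.
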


More generally, given a basis of shapes $\cS$ on a domain $\Omega \subset \Rn$, $|\Omega|<\infty$, one can pose the following problem.

\begin{problem}
Does $f\in \BMO{\cS}{p}(\Omega)$ imply that $f$ satisfies the John-Nirenberg inequality on $\Omega$?  That is, do there exist constants $c,C>0$ such that 
$$
|\{x\in \Omega:|f(x)-f_{\Omega}|>\alpha \}|\leq c|\Omega|\exp\left(-\frac{C}{\norm{f}{\BMO{\cS}{p}}}\alpha \right),\qquad \alpha>0
$$
holds for all $f\in \BMO{\cS}{p}(\Omega)$?
If so, what is the smallest constant $c_{\Omega,\scriptscriptstyle{\rm{JN}}}(p,\cS)$ and the largest constant $C_{\Omega,\scriptscriptstyle{\rm{JN}}}(p,\cS)$ for which this inequality holds? 
\end{problem}

When $n=1$, $\Omega=I$, a finite interval, and $\cS = \cI$, the positive answer is a special case of Theorem~\ref{thm-jn}. Sharp constants are known for the cases $p = 1$
and $p = 2$. For $p = 1$, it is shown in \cite{le2} that $c_{I,\scriptscriptstyle{\rm{JN}}}(1,\cI)=\frac{1}{2}e^{4/e}$ and in \cite{ko1} that $C_{I,\scriptscriptstyle{\rm{JN}}}(1,\cI)=2/e$. For $p = 2$, Bellman function techniques are used in \cite{vv} to give $c_{I,\scriptscriptstyle{\rm{JN}}}(2,\cI)=4/e^2$ and $C_{I,\scriptscriptstyle{\rm{JN}}}(2,\cI)=1$.

When $n\geq{2}$, $\Omega=R$, a rectangle, and $\cS=\cR$, a positive answer is provided by a less well-known result due to Korenovskii in \cite{ko3}, where he also shows the sharp constant $C_{R,\scriptscriptstyle{\rm{JN}}}(1,\mathcal{R})=2/e$.

Dimension-free bounds on these constants are also of interest. In \cite{css1}, Cwikel, Sagher, and Shvartsman conjecture a geometric condition on cubes and prove dimension-free bounds for $c_{\Omega,\scriptscriptstyle{\rm{JN}}}(1,\cQ)$ and $C_{\Omega,\scriptscriptstyle{\rm{JN}}}(1,\cQ)$ conditional on this hypothesis being true.

Rather than just looking at $\Omega$, we can also consider whether the John-Nirenberg inequality holds for all shapes $S$.

\begin{definition}
We say that a function $f \in \Loneloc(\Omega)$ has the John-Nirenberg property with respect to a basis  $\cS$ of shapes on $\Omega$ if 
there exist constants $c_1,c_2>0$ such that for all $S\in\cS$, \eqref{decay} holds for $X = S$.
\end{definition}

We can now formulate a modified problem.

\begin{problem}
For which bases $\cS$ and $p \in [1,\infty)$ does $f\in \BMO{\cS}{p}(\Omega)$ imply that $f$ has the John-Nirenberg property with respect to $\cS$?  If this is the case, what is the smallest constant $c = c_{\scriptscriptstyle{\rm{JN}}}(p,\cS)$ and the largest constant $C = C_{\scriptscriptstyle{\rm{JN}}}(p,\cS)$ for which \eqref{decay} holds for all $f\in \BMO{\cS}{}(\Omega)$ and $S \in \cS$ with $c_1 = c$, $c_2 = C/\norm{f}{\BMO{\cS}{p}}$?
\end{problem}

Since Theorem~\ref{thm-jn} holds for any cube $Q$ in $\Rn$ with constants independent of $Q$,  it follows that for a domain $\Omega \subset \Rn$, any $f \in \BMO{}{}(\Omega)$ has the John-Nirenberg property with
respect to $\cQ$, and equivalently $\cB$. Similarly, every $f \in \BMO{\cR}{}(\Omega)$ has the John-Nirenberg property with respect to $\cR$.

In the negative direction, $f\in\BMO{\cC}{p}(\Rn)$ does not necessarily have the John-Nirenberg property with respect to $\cC$ (\cite{la,ly}). This space, known in the literature as $\CMO{}{}$ for central mean oscillation or $\CBMO{}{}$ for central bounded mean oscillation, was originally defined with the additional constraint that the balls have radius at least 1 (\cite{cl,gc}).

We now state the converse to Theorem~\ref{thm-jn}, namely that the John-Nirenberg property is sufficient for $\BMO{}{}$, in more generality.

\begin{theorem}
 If $f \in \Loneloc(\Omega)$ and $f$ has the John-Nirenberg property with respect to $\cS$,
 then $f\in \BMO{\cS}{p}(\Omega)$  for all $p \in [1, \infty)$, with 
 $$\norm{f}{\BMO{\cS}{p}} \leq \frac{(c_1 p\Gamma(p))^{1/p}}{c_2}.$$
\end{theorem}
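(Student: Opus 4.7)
The strategy is to apply the layer cake representation (Cavalieri's principle, as recalled just before Lemma~\ref{lm:3}) shape by shape, using the exponential decay from the John-Nirenberg property to majorise the resulting integral by a gamma function.

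I would fix an arbitrary shape $S\in\cS$. Since $f$ has the John-Nirenberg property with respect to $\cS$, $f\in\Lone(S)$, the mean $f_S$ is well defined, and
\begin{equation*}
|\{x\in S:|f(x)-f_S|>\alpha\}|\leq c_1|S|e^{-c_2\alpha},\qquad \alpha>0.
\end{equation*}
For $1\leq p<\infty$, Cavalieri's principle applied to the nonnegative function $|f-f_S|$ gives
\begin{equation*}
\int_S|f-f_S|^p=p\int_0^\infty\alpha^{p-1}|\{x\in S:|f(x)-f_S|>\alpha\}|\,d\alpha\leq c_1|S|\,p\int_0^\infty\alpha^{p-1}e^{-c_2\alpha}\,d\alpha.
\end{equation*}

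The remaining one-dimensional integral is a standard gamma integral: the substitution $u=c_2\alpha$ yields $\int_0^\infty\alpha^{p-1}e^{-c_2\alpha}\,d\alpha=c_2^{-p}\Gamma(p)$. Dividing both sides of the previous display by $|S|$ and taking $p$-th roots produces the shapewise estimate
\begin{equation*}
\left(\dashint_S|f-f_S|^p\right)^{1/p}\leq\frac{(c_1p\Gamma(p))^{1/p}}{c_2}.
\end{equation*}
The right-hand side is independent of $S$, so taking the supremum over all $S\in\cS$ shows that $f\in\BMO{\cS}{p}(\Omega)$ with the claimed bound on its norm.

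I do not foresee a genuine obstacle: the only point requiring a moment of thought is that $f\in\Lone(S)$ for every $S\in\cS$, which is required so that the mean $f_S$ appearing in \eqref{decay} is meaningful and so that $f$ is eligible to lie in $\BMO{\cS}{p}(\Omega)$ in the first place, but this integrability is built into the John-Nirenberg property as stated in the definition. The argument is also uniform in $p\in[1,\infty)$, so the same computation handles every $p$ at once.
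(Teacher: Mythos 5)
Your argument is correct and coincides with the paper's proof: both apply Cavalieri's principle to $|f-f_S|$ on a fixed shape $S$, insert the exponential decay \eqref{decay}, and evaluate the resulting gamma integral to get a bound independent of $S$. Nothing further is needed.
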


\begin{proof}
Take $S\in\cS$. By Cavalieri's principle and \eqref{decay},
\[
\begin{split}
\dashint_{S}\!|f-f_S|^{p}
&=\frac{p}{|S|}\int_{0}^{\infty}\!\alpha^{{p}-1}|\{x\in S:|f(x)-f_S|>\alpha \}|\,d\alpha\\
&\leq pc_1 \int_{0}^{\infty}\!\alpha^{{p}-1}\exp\left(-c_2\alpha \right) \,d\alpha\\ 
&= \frac{c_1 p\Gamma(p)}{{c_2}^{p}},
\end{split}
\]
from where the result follows.
\end{proof}

By Lemma~\ref{pr:18}, this theorem shows that every $f$ with the John-Nirenberg property is in $\Lploc(\Omega)$.

A consequence of the John-Nirenberg Lemma, Theorem~\ref{thm-jn}, is that $$\BMO{}{p_1}(\Rn)\cong \BMO{}{p_2}(\Rn)$$ for all $1\leq p_1,p_2<\infty$.  This can be stated in more generality
as a corollary of the preceding theorem.

\begin{corollary}
\label{cor-jn}
If there exists $p_0\in [1,\infty)$ such that every $f\in \BMO{\cS }{p_0}(\Omega)$  has the John-Nirenberg property with respect to $\cS$, then
$$\BMO{\cS}{p_1}(\Omega)\cong \BMO{\cS}{p_2}(\Omega), \quad p_0 \leq p_1,p_2<\infty.$$
\end{corollary}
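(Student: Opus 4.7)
The plan is to split the claimed isomorphism into its two inclusions. For the easy direction, whenever $p_0 \leq p_1 \leq p_2 < \infty$, Proposition~\ref{pr:1} gives at once $\BMO{\cS}{p_2}(\Omega) \subset \BMO{\cS}{p_1}(\Omega)$ together with the norm monotonicity $\norm{f}{\BMO{\cS}{p_1}} \leq \norm{f}{\BMO{\cS}{p_2}}$. So the real content is the reverse: given $f \in \BMO{\cS}{p_1}(\Omega)$ with both $p_1, p_2 \geq p_0$, I need to show $f \in \BMO{\cS}{p_2}(\Omega)$ with a quantitative bound on its norm.

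My strategy is to route $f$ through $\BMO{\cS}{p_0}(\Omega)$ in order to trigger the hypothesis. Since $p_1 \geq p_0$, another application of Proposition~\ref{pr:1} places $f$ in $\BMO{\cS}{p_0}(\Omega)$ with $\norm{f}{\BMO{\cS}{p_0}} \leq \norm{f}{\BMO{\cS}{p_1}}$. By the standing assumption, $f$ then has the John--Nirenberg property relative to $\cS$, so \eqref{decay} holds on every shape $S \in \cS$ with some constants $c_1, c_2 > 0$. Feeding these into the theorem immediately preceding the corollary produces $f \in \BMO{\cS}{p_2}(\Omega)$ with the explicit estimate
$$\norm{f}{\BMO{\cS}{p_2}} \leq \frac{(c_1 p_2 \Gamma(p_2))^{1/p_2}}{c_2},$$
which already yields the set-theoretic inclusion $\BMO{\cS}{p_1}(\Omega) \subset \BMO{\cS}{p_2}(\Omega)$.

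The delicate point, which I expect to be the main obstacle, is that $\cong$ should mean equivalence of norms and not mere equality as sets, so the constants above must scale linearly in $\norm{f}{\BMO{\cS}{p_1}}$. The standard workaround is to apply the hypothesis to the normalized function $g = f/\norm{f}{\BMO{\cS}{p_0}}$, which satisfies $\norm{g}{\BMO{\cS}{p_0}} = 1$, and hence satisfies \eqref{decay} with constants $c_1 = c$, $c_2 = C$ depending only on $g$ through its unit norm; homogeneity of the $p$-mean oscillation then promotes this to $c_1 = c$ and $c_2 = C/\norm{f}{\BMO{\cS}{p_0}}$ for $f$ itself, recovering exactly the scaling appearing in Theorem~\ref{thm-jn}. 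Substituting back into the displayed bound gives $\norm{f}{\BMO{\cS}{p_2}} \leq C(p_2)\,\norm{f}{\BMO{\cS}{p_0}} \leq C(p_2)\,\norm{f}{\BMO{\cS}{p_1}}$, and combining with the Jensen-type estimate from the first paragraph establishes the two-sided norm equivalence for all $p_0 \leq p_1, p_2 < \infty$.
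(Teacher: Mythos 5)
Your proposal is correct and follows essentially the same route as the paper: Proposition~\ref{pr:1} gives the inclusion $\BMO{\cS}{p}(\Omega)\subset\BMO{\cS}{p_0}(\Omega)$ for $p\geq p_0$, and the theorem preceding the corollary converts the John--Nirenberg property back into membership in $\BMO{\cS}{p}(\Omega)$ with an explicit constant, so all the spaces are congruent to the hub $\BMO{\cS}{p_0}(\Omega)$. The only cosmetic difference is that the paper simply reads the hypothesis as supplying constants $c_1=c_{\scriptscriptstyle{\rm{JN}}}(p_0,\cS)$ and $c_2=C_{\scriptscriptstyle{\rm{JN}}}(p_0,\cS)/\norm{f}{\BMO{\cS}{p_0}}$ uniformly in $f$, whereas you recover that scaling by normalizing $f$; both versions rest on the same implicit uniformity over the unit ball.
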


\begin{proof}
The hypothesis means that there are constants $c_{\scriptscriptstyle{\rm{JN}}}(p_0,\cS)$, $C_{\scriptscriptstyle{\rm{JN}}}(p_0,\cS)$ such that if $f\in \BMO{\cS }{p_0}(\Omega)$ then $f$ satisfies \eqref{decay}  for all $S \in \cS$, with $c_1 = c_{\scriptscriptstyle{\rm{JN}}}(p_0,\cS)$, $c_2 = C_{\scriptscriptstyle{\rm{JN}}}(p_0,\cS)/\norm{f}{\BMO{\cS}{p_0}}$.

From the  preceding theorem, this implies that $\BMO{\cS }{p_0}(\Omega) \subset \BMO{\cS}{p}(\Omega)$  for all $p \in [1, \infty)$, with
$$\norm{f}{\BMO{\cS}{p}} \leq \frac{(c_{\scriptscriptstyle{\rm{JN}}}(p_0,\cS) p\Gamma(p))^{1/p}}{C_{\scriptscriptstyle{\rm{JN}}}(p_0,\cS)} \norm{f}{\BMO{\cS}{p_0}}.
$$

Conversely, Proposition \ref{pr:1} gives us that $\BMO{\cS}{p}(\Omega)\subset \BMO{\cS}{p_0}(\Omega)$ whenever $p_0 \leq p <\infty$, with $\norm{f}{\BMO{\cS}{p_0}}\leq \norm{f}{\BMO{\cS}{p}}$.

Thus all the spaces $\BMO{\cS}{p}(\Omega)$, $p_0 \leq p <\infty$, are congruent to $\BMO{\cS}{p_0}(\Omega)$.
\end{proof}

The John-Nirenberg Lemma gives the hypothesis of Corollary~\ref{cor-jn} for the bases $\cQ$ and $\cB$ on $\Rn$ with $p_0 = 1$.  As pointed out, by results of \cite{ko3} this also applies to the basis $\cR$, showing that $\BMO{\cR}{p_1}(\Omega)\cong \BMO{\cR}{p_2}(\Omega)$ for all $1\leq p_1,p_2<\infty$.

\begin{problem}
If the hypothesis of Corollary~\ref{cor-jn} is satisfied with $p_0 = 1$, what is the smallest constant $c_{}(p,\cS)$ such that $\norm{f}{\BMO{\cS}{p}}\leq c_{}(p,\cS)\norm{f}{\BMO{\cS}{}}$ holds for all $f\in \BMO{\cS}{p}(\Omega)$?
\end{problem}

The proof of Corollary~\ref{cor-jn}  gives a well-known upper bound on $c_{}(p,\cS)$:
$$c_{}(p,\cS) \leq \frac{\big(c_{\scriptscriptstyle{\rm{JN}}}(1,\cS)p\Gamma(p)\big)^{1/{p}}}{C_{\scriptscriptstyle{\rm{JN}}}(1,\cS)}.$$  

\section{{\bf Product decomposition}}
In this section, we assume that the domain $\Omega$ can be decomposed as 
\begin{equation}
\label{eqn-decomp}
\Omega=\Omega_1\times \Omega_2\times\cdots\times \Omega_k
\end{equation}
 for $2\leq k\leq{n}$, where each $\Omega_i$ is a domain in $\R^{m_i}$ for $1\leq {m_i}\leq{n-1}$, having its own basis of shapes $\cS_i$.

We will require some compatibility between the basis $\cS$ on all of $\Omega$ and these individual bases. 
\begin{definition}
We say that $\cS$ satisfies the weak decomposition property with respect to $\{\cS_i\}_{i=1}^{k}$ if for every $S\in\cS$ there exist $S_i\in\cS_i$ for each $i=1,\ldots,k$ such that $S=S_1\times S_2\times\ldots S_{k}$. If, in addition, for every $\{S_i \}_{i=1}^{k}$, $S_i\in\cS_i$, the set $S_1\times S_2\times\ldots S_{k}\in\cS$, then we say that the basis $\cS$ satisfies the strong decomposition property with respect to $\{\cS_i\}_{i=1}^{k}$.
\end{definition}
Using $\cR_i$ to denote the basis of rectangles in $\Omega_i$ (interpreted as $\cI_{i}$ if $m_i=1$), note that the basis $\cR$ on $\Omega$ satisfies the strong decomposition property with respect to $\{\cR_i\}_{i=1}^{k}$ and for any $k$. Meanwhile, the basis $\cQ$ on $\Omega$ satisfies the weak decomposition property with respect to $\{\cR_i\}_{i=1}^{k}$ for any $k$, but not the strong decomposition property.

We now turn to the study of the spaces $\BMO{\cS_{i}}{p}{(\Omega)}$, first defined using different notation in the context of the bidisc $\T\times \T$ in \cite{cs}. For simplicity, we only define $\BMO{\cS_{1}}{p}{(\Omega)}$, as the other $\BMO{\cS_{i}}{p}{(\Omega)}$ for $i=2,\ldots,k$ are defined analogously. We write a point in $\Omega$ as $(x,y)$, where $x\in \Omega_1$ and $y\in \Omegatil= \Omega_2\times\cdots\times \Omega_k$. Writing $f_y(x)=f(x,y)$, functions in $\BMO{\cS_{1}}{p}{(\Omega)}$ are those for which $f_y$ is in $\BMO{\cS_{1}}{p}{(\Omega_1)}$, uniformly in $y$:

\begin{definition}
\label{def-cS1}
A function $f\in \Loneloc(\Omega)$ is said to be in $\BMO{\cS_{1}}{p}{(\Omega)}$ if 
$$
\norm{f}{\BMO{\cS_{1}}{p}\!(\Omega)}=\sup_{y\in \tilde{\Omega}}\norm{{f_y}}{\BMO{\cS_{1}}{p}\!(\Omega_1)}<\infty,
$$
where $f_y(x)=f(x,y)$.
\end{definition}

Although this norm combines a supremum with a $\BMO{}{}$ norm, we are justified in calling $\BMO{\cS_{1}}{p}{(\Omega)}$ a $\BMO{}{}$ space as it inherits many properties from $\BMO{\cS}{p}{(\Omega)}$. In particular, for each $i=1,2,\ldots,k$, $\Linfty(\Omega)\subset\BMO{\cS_i}{p}{(\Omega)}$ and $\BMO{\cS_i}{p_1}{(\Omega)}\cong\BMO{\cS_i}{p_2}{(\Omega)}$ for all $1\leq p_1,p_2<\infty$ if the hypothesis of Corollary~\ref{cor-jn} is satisfied with $p_0 = 1$ for 
$\cS_i$ on $\Omega_i$. Moreover, under certain conditions, the spaces $\BMO{\cS_i}{p}{(\Omega)}$ can be quite directly related to $\BMO{\cS}{p}{(\Omega)}$, revealing the product nature of $\BMO{\cS}{p}{(\Omega)}$. This depends on the decomposition property of the basis $\cS$, as well as some differentiation properties of the $\cS_i$.

Before stating the theorem, we briefly recall the main definitions related to the theory of differentiation of the integral; see the survey \cite{guz} for a standard reference. For a basis of shapes $\cS$, denote by $\cS(x)$ the subcollection of shapes that contain $x\in {\Omega}$. We say that $\cS$ is a differentiation basis if for each $x\in \Omega$ there exists a sequence of shapes $\{{S}_k\}\subset\cS(x)$ such that $\delta(S_k)\rightarrow{0}$ as $k\rightarrow\infty$. Here, $\delta(\cdot)$ is the Euclidean diameter. For $f\in \Loneloc(\Omega)$, we define the upper derivative of $\int\!f$ with respect to $\cS$ at $x\in\Omega$ by
$$
\overline{D}(\int\!f,x)=\sup\left\{\limsup_{k\rightarrow\infty}\dashint_{S_k }\!f:\{S_k\}\subset\cS(x),\,\,\delta(S_k)\rightarrow{0}\,\,\text{as}\,\,k\rightarrow\infty  \right\}
$$
and the lower derivative of $\int\!f$ with respect to $\cS$ at $x\in\Omega$ by
$$
\underline{D}(\int\!f,x)=\inf\left\{\liminf_{k\rightarrow\infty}\dashint_{S_k }\!f:\{S_k\}\subset\cS(x),\,\,\delta(S_k)\rightarrow{0}\,\,\text{as}\,\,k\rightarrow\infty  \right\}.
$$
We say, then, that a differentiation basis $\cS$ differentiates $\Loneloc(\Omega)$ if for every $f\in \Loneloc(\Omega)$ and for almost every $x\in \Omega$, $\overline{D}(\int\!f,x)=\underline{D}(\int\!f,x)=f(x)$. The classical Lebesgue differentiation theorem is a statement that the basis $\cB$ (equivalently, $\cQ$) differentiates $\Loneloc(\Omega)$. It is known, however, that the basis $\cR$ does not differentiate $\Loneloc(\Omega)$, but does differentiate the Orlicz space $L(\log L)^{n-1}(\Omega)$ (\cite{jmz}). 

\begin{theorem}
\label{th:4}
Let $\Omega$ be a domain satisfying \eqref{eqn-decomp}, $\cS$ be a basis of shapes for $\Omega$ and $\cS_i$ be a basis of shapes for $\Omega_i$, $1 \leq i \leq k$. 
\begin{enumerate}
\item[a)] Let $f\in \bigcap_{i=1}^{k}{\BMO{\cS_{i}}{p}}(\Omega)$. If $\cS$ satisfies the weak decomposition property with respect to $\{\cS_i\}_{i=1}^{k}$, then $f\in\BMO{{\cS}}{p}{(\Omega)}$ with 
$$
\norm{f}{\BMO{\cS}{p}\!(\Omega)}\leq \sum_{i=1}^{k}\norm{f}{\BMO{\cS_{i}}{p}\!(\Omega)}.
$$
\item[b)] Let $f\in\BMO{{\cS}}{p}{(\Omega)}$. If $\cS$ satisfies the strong decomposition property with respect to $\{\cS_i\}_{i=1}^{k}$ and each $\cS_i$ contains a differentiation basis that differentiates $\Loneloc(\Omega_i)$, then $f\in\bigcap_{i=1}^{k}{\BMO{\cS_{i}}{p}}(\Omega)$ with 
$$
\max_{i=1,\ldots,k}\{\norm{f}{\BMO{\cS_{i}}{p}\!(\Omega)}\}\leq 2^{k-1}\norm{f}{\BMO{\cS}{p}\!(\Omega)}.
$$
When $p = 2$, the constant $2^{k-1}$ can be replaced by $1$.  
\end{enumerate}
\end{theorem}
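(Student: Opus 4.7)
For part (a), the plan is a telescoping argument over the product coordinates. Given $S \in \cS$, the weak decomposition yields $S = S_1 \times \cdots \times S_k$ with $S_i \in \cS_i$. For $0 \leq i \leq k$, define the partial averaging operator $A_i f$ obtained by averaging $f$ over $S_1 \times \cdots \times S_i$ in the first $i$ variables, so that $A_0 f = f$ and $A_k f \equiv f_S$. Write $f - f_S = \sum_{i=1}^k (A_{i-1}f - A_i f)$ and apply Minkowski's inequality on $L^p(S, dx/|S|)$ to reduce the bound to estimating each summand. By Fubini, $A_{i-1} f - A_i f$ is the oscillation of $A_{i-1} f$ in the $i$-th variable alone (the remaining coordinates regarded as parameters); Jensen's inequality applied to the averaging over the first $i - 1$ variables bounds its $L^p(S)$ norm by $\|f\|_{\BMO{\cS_i}{p}(\Omega)}$, and summing yields (a).

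For part (b), I would proceed by induction on $k$; by symmetry, it suffices to bound $\|f\|_{\BMO{\cS_1}{p}(\Omega)}$. In the base case $k = 2$, fix $S_1 \in \cS_1$ and an a.e.\ $y_2 \in \Omega_2$; pick $T^{(m)}$ in the hypothesized differentiation subbasis $\cS_2^0 \subset \cS_2$, with $T^{(m)} \ni y_2$ and $\delta(T^{(m)}) \to 0$. The strong decomposition yields $S^{(m)} := S_1 \times T^{(m)} \in \cS$, so the BMO estimate on $S^{(m)}$ combined with two successive applications of Minkowski's inequality and Lebesgue differentiation (applied to $y_2' \mapsto (f_{y_2'})_{S_1}$ and to $y_2' \mapsto \dashint_{S_1} |f(x, y_2') - f_{S^{(m)}}|^p \,dx$, both in $\Loneloc(\Omega_2)$ by Fubini and Lemma~\ref{pr:18}) show that
\[
\Bigl(\dashint_{S_1} |f_{y_2} - (f_{y_2})_{S_1}|^p \,dx\Bigr)^{1/p} \leq \|f\|_{\BMO{\cS}{p}(\Omega)}
\]
after letting $m \to \infty$; this gives the base case with constant within the stated bound. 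For $p = 2$ the orthogonality identity in Proposition~\ref{pr:12} gives the same conclusion more directly.

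For the inductive step, split $\Omega = \Omega' \times \Omega_k$ with $\Omega' = \Omega_1 \times \cdots \times \Omega_{k-1}$ and introduce the intermediate basis $\cS' = \{S_1 \times \cdots \times S_{k-1} : S_i \in \cS_i\}$. The strong decomposition of $\cS$ descends to $\{\cS', \cS_k\}$ and $\cS_k$ retains its differentiation basis, so the base case applied to this splitting bounds $\|f(\cdot, y_k)\|_{\BMO{\cS'}{p}(\Omega')}$ by $\|f\|_{\BMO{\cS}{p}(\Omega)}$ for a.e.\ $y_k$. Since $\cS'$ itself has the strong decomposition property with respect to $\{\cS_1, \ldots, \cS_{k-1}\}$ and each $\cS_i$ ($i < k$) contains a differentiation basis, the inductive hypothesis applied to $f(\cdot, y_k)$ on $\Omega'$ then controls $\|f(\cdot, y_k)\|_{\BMO{\cS_1}{p}(\Omega')}$. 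Taking the supremum over $y_k$ closes the induction, and the multiplicative constants accumulate to at most $2^{k-1}$ (and to $1$ when $p = 2$) after $k - 1$ iterations.

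The principal obstacle is that the natural product basis $\cS_2^0 \times \cdots \times \cS_k^0$ built from the given one-dimensional differentiation bases typically fails to differentiate $\Loneloc(\tilde\Omega)$---the classical counterexample being rectangles in $\R^2$, as noted in the discussion preceding the theorem---so one cannot contract all of $y_2, \ldots, y_k$ simultaneously to recover pointwise values of $f_y$ from averages. The induction circumvents this by differentiating one variable at a time, and careful Fubini bookkeeping at each step ensures that the a.e.\ exceptional sets are compatible and the resulting pointwise bounds aggregate into a genuine supremum bound on $\|f\|_{\BMO{\cS_1}{p}(\Omega)}$.
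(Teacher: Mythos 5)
Your part (a) is correct and is in substance the paper's own argument: the two-term case of your telescoping sum (splitting $f-f_R$ into $f-(f_y)_S$ and $(f_y)_S-f_R$, then Minkowski plus Jensen) is exactly what the paper does for $k=2$, and its induction on $k$ unwinds to your $k$-term telescope. Doing it in one pass is a perfectly acceptable, slightly cleaner, packaging and gives the same constant.

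Part (b) follows the same architecture as the paper (fix $S_1\in\cS_1$, take shapes $T^{(m)}\ni y_2$ from the differentiation basis of the complementary factor with $\delta(T^{(m)})\to 0$, use the strong decomposition to form $S_1\times T^{(m)}\in\cS$, and peel off one factor per step of the induction), but the base case has a gap as stated. You invoke Lebesgue differentiation for the function $y_2'\mapsto \dashint_{S_1}|f(x,y_2')-f_{S^{(m)}}|^p\,dx$. That function depends on $m$: the comparison constant $f_{S^{(m)}}$ moves with the shrinking shape. The differentiation hypothesis gives $\dashint_{T^{(m)}}g\to g(y_2)$ for a \emph{fixed} $g\in\Loneloc(\Omega_2)$ at a.e.\ $y_2$; it says nothing about $\dashint_{T^{(m)}}g_m$ for a sequence $g_m$ coupled to the same shrinking sequence, so this step does not follow as written. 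The paper avoids the issue with Proposition~\ref{pr:6}: it bounds the oscillation of $f_{y'}$ about its own mean $(f_{y'})_{S_1}$ by $2^p$ times its oscillation about the constant $f_{R_0}$, so that the function being differentiated, $y'\mapsto \cO_p(f_{y'},S_1)$, is independent of the shrinking sequence; that factor $2^p$ is exactly the source of the constant $2^{k-1}$, and Proposition~\ref{pr:12} removes it when $p=2$.

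Your route is repairable, and the repaired version actually beats the stated constant. Since $f_{S^{(m)}}=\dashint_{T^{(m)}}(f_{y'})_{S_1}\,dy'\to(f_{y_2})_{S_1}=:c$ for a.e.\ $y_2$ (legitimate: the differentiated function is fixed), one should instead differentiate the fixed function $y'\mapsto\dashint_{S_1}|f(x,y')-c|^p\,dx$ and control its average over $T^{(m)}$, which equals $\dashint_{S^{(m)}}|f-c|^p$, by $\left(\norm{f}{\BMO{\cS}{p}}+|f_{S^{(m)}}-c|\right)^p$ via Minkowski on $S^{(m)}$. The remaining wrinkle is that $c$ depends on the point $y_2$ at which you differentiate, so the exceptional null set depends on the function; this is handled by running the differentiation for a countable dense set of constants and using that $c\mapsto\left(\dashint_{S_1}|f_{y'}-c|^p\right)^{1/p}$ is $1$-Lipschitz uniformly in $y'$. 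Carried out this way the base case gives constant $1$, hence constant $1$ in place of $2^{k-1}$ for every $p$ --- stronger than the theorem --- so you should either write out that argument in full (including the countable-density step) or fall back on the paper's use of Proposition~\ref{pr:6}. One further point, shared with the paper's proof and not resolved by your ``Fubini bookkeeping'' remark: the null set of $y_2$ also depends on the fixed shape $S_1$, while the conclusion quantifies over all (uncountably many) $S_1\in\cS_1$.
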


This theorem was first pointed out in \cite{cs} in the case of $\cR$ in $\T\times\T$. Here, we prove it in the setting of more general shapes and domains, clarifying the role played by the theory of differentiation and keeping track of constants. 

\begin{proof}
We first present the proof in the case of $k=2$. We write $\Omega=X \times Y$, denoting by $(x,y)$ an element in $\Omega$ with $x\in{X}$ and $y\in Y$. The notations $\cS_X$ and $\cS_Y$ will be used for the basis in $X$ and $Y$, respectively. Similarly, the measures $d{x}$ and $d{y}$ will be used for Lebesgue measure on $X$ and $Y$, respectively, while $d{A}$ will be used for the Lebesgue measure on $\Omega$. 

To prove (a), assume that $\cS$ satisfies the weak decomposition property with respect to $\{\cS_X,\cS_Y\}$ and let $f\in\BMO{\cS_X}{p}{(\Omega)}\cap\BMO{\cS_Y}{p}{(\Omega)}$. Fixing a shape $R\in\cS$, write $R=S\times{T}$ for $S\in\cS_X$ and $T\in\cS_Y$. Then, by Minkowski's inequality, 
$$
\left(\dashint_{R}\!|f(x,y)-f_R|^pdA\right)^{\frac{1}{p}}\!\!\leq\left(\dashint_{R}\!|f(x,y)-(f_{y})_S|^pdA\right)^{\frac{1}{p}}\!\!+\left(\dashint_{T}\!|(f_{y})_S-f_R|^pdy\right)^{\frac{1}{p}}.
$$
For the first integral, we estimate
\begin{equation}
\label{eqn-first}
\dashint_{R}\!|f(x,y)-(f_y)_S|^p\,dA\leq \dashint_{T}\!\norm{f_y}{\BMO{\cS_X}{p}\!(X)}^p\,dy\leq \norm{f}{\BMO{\cS_X}{p}\!(\Omega)}^p.
\end{equation}
For the second integral, Jensen's inequality gives
\[
\begin{split}
\left(\dashint_{T}\!|(f_{y})_S-f_R|^p\,dy\right)^{1/p}&\leq \left(\dashint_{T}\! \left| \dashint_{S}\!f_{y}(x)\,dx-\dashint_{S}\dashint_{T}\!f(x,y)\,dy\,dx \right|^pdy\right)^{1/p}\\&= \left(\dashint_{T}\!\left|\dashint_{S}\!\big( f(x,y)-(f_x)_T \big)\,dx \right|^p dy\right)^{1/p}\\&\leq  \left(\dashint_{R}\!|f(x,y)-(f_x)_T|^p\,dA\right)^{1/p}\\&\leq \norm{f}{\BMO{\cS_Y}{p}\!(\Omega)},
\end{split}
\]
where the last inequality follows in the same way as \eqref{eqn-first}. Therefore, we may conclude that $f\in\BMO{{\cS}}{p}{(\Omega)}$ with $\norm{f}{\BMO{\cS}{p}\!(\Omega)}\leq \norm{f}{\BMO{\cS_X}{p}\!(\Omega)}+\norm{f}{\BMO{\cS_Y}{p}\!(\Omega)}$. 

We now come to the proof of (b).  To simplify the  notation, we use $\cO_p(f,S)$ for the $p$-mean oscillation of the function $f$ on the shape $S$, i.e.\
$$\cO_p(f,S): = \dashint_{S}\!|f-f_S|^p.$$

Assume that $\cS$ satisfies the strong decomposition property with respect to $\{\cS_X,\cS_Y\}$, and that $\cS_X$ and $\cS_Y$ each contain a differentiation basis that differentiates $\Loneloc(X)$ and $\Loneloc(Y)$, respectively. 

Let $f\in\BMO{{\cS}}{p}{(\Omega)}$. Fix a shape $S_0\in\cS_X$ and consider the $p$-mean oscillation of $f_y$ on $S_0$, 
$\cO_p(f_y,S_0)$, as a function of $y$. For any $T\in\cS_Y$, writing $R = S_0\times T\in\cS$, we have that $R\in\cS$ by the strong decomposition property of $\cS$, 
so $f\in\BMO{{\cS}}{p}{(\Omega)}$ implies $f \in \Lone(R)$ and therefore
$$
\int_{Y}\!\cO_p(f_y,S_0)\,dy = \frac{1}{|S_0|}\int_{T}\!\int_{S_0}\!|f_y(x)-(f_y)_{S_0}|^p\,dx\,dy\leq \frac{2^p}{|S_0|}\int_{R}\!|f(x,y)|^p\,dx\,dy<\infty.
$$
By Lemma \ref{pr:18}, this is enough to guarantee that $\cO_p(f_y,{S_0})\in \Loneloc(Y)$.

Let  $\eps>0$.  Since $\cS_Y$ contains a differentiation basis,  for almost every $y_0\in Y$ there exists a shape $T_0\in\cS_Y$ containing $y_0$ such that
\begin{equation}
\label{diff}
\left|\dashint_{T_0}\!\cO_p(f_y,{S_0})\,dy-\cO_p(f_{y_0},{S_0}) \right|<\eps.
\end{equation}

Fix such an $x_0$ and a $T_0$ and let $R_0=S_0\times T_0$.  Then the strong decomposition property implies that $R_0\in\cS$,  and by Proposition~\ref{pr:6} applied
to the mean oscillation of $f_y$ on $S_0$, we have
\[
\begin{split}
\dashint_{T_0}\!\cO_p(f_y,{S_0})\,dy
& = \dashint_{T_0}\!\dashint_{{S_0}}|f_y(x)-(f_y)_{S_0}|^p\,dx\,dy\\
& \leq 2^p\dashint_{T_0}\dashint_{{S_0}}\!|f(x,y)-f_{R_0}|^p\,dx\,dy\\
&= 2^p\dashint_{R_0}\!|f(x,y)-f_{R_0}|^p\,dA\\
&\leq 2^p\norm{f}{\BMO{ \cS}{p}(\Omega) }^p.
\end{split}
\]
Note that when $p = 2$, Proposition~\ref{pr:12} implies that the factor of $2^p$ can be dropped.

Combining this with (\ref{diff}), it follows that
$$
\cO_p(f_{y_0},S_0)< \eps+2^p\norm{f}{\BMO{ \cS}{p}(\Omega) }^p.
$$
Taking $\eps\rightarrow{0^+}$, since $S_0$ is arbitrary, this implies that $f_{y_0}\in\BMO{\cS_X}{p}(X)$ with 
\begin{equation}
\label{eqn-partb}
\norm{f_{y_0}}{\BMO{\cS_X }{p}(X) }\leq 2\norm{f}{\BMO{ \cS}{p}(\Omega)}.
\end{equation} 
 The fact that this is true for almost every $y_0\in Y$ implies that 
$\norm{f}{\BMO{\cS_X}{p}(\Omega) }\leq 2\norm{f}{\BMO{\cS}{p}(\Omega)}$.

Similarly, one can show that $\norm{f}{\BMO{\cS_Y}{p}(\Omega) }\leq 2\norm{f}{\BMO{\cS}{p}(\Omega)}$. Thus we have shown $f\in\BMO{{\cS_X}}{p}{(\Omega)}\cap\BMO{{\cS_Y}}{p}{(\Omega)}$ with 
$$\max\{\norm{f}{\BMO{\cS_X}{p}(\Omega) },\norm{f}{\BMO{\cS_Y}{p}(\Omega)}\}\leq 2\norm{f}{\BMO{\cS}{p}(\Omega)}.$$
Again, when $p = 2$ the factor of $2$ disappears.

For $k > 2$, let us assume the result holds for $k - 1$.  Write $X = \Omega_1 \times \Omega_2 \times \ldots \times \Omega_{k-1}$ and $Y = \Omega_k$.
Set $\cS_Y = \cS_k$.  By the weak decomposition property of $\cS$, we can define the projection of the basis $\cS$ onto $X$, namely
\begin{equation}
\label{eqn-cSx}
\cS_X = \{S_1 \times S_2 \times \ldots \times S_{k-1}: S_i \in \cS_i, \exists S_k \in  \cS_k, \prod_{i = 1}^k S_i \in \cS\},
\end{equation}
and this is a basis of shapes on $X$ which by definition also has the weak decomposition property.  Moreover, $\cS$ has the weak decomposition property with respect to $\cS_X$
and $\cS_Y$.  

To prove part (a) for $k$ factors, we first apply the result of part (a) proved above for $k = 2$, followed by the definitions and part (a) applied again to $X$, since we are assuming it is valid with $k - 1$ factors.  This gives us the inclusion 
$\bigcap_{i=1}^{k}{\BMO{\cS_{i}}{p}}(\Omega) \subset \BMO{\cS}{p}(\Omega)$
with the following estimates on the norms (we use the notation $\xihat$ for the $k-2$ tuple of variables obtained from $(x_1,\ldots, x_{k-1})$ by removing $x_i$): 
\[
\begin{split}
\norm{f}{\BMO{\cS}{p}(\Omega)} 
& \leq \norm{f}{\BMO{\cS_X}{p}(\Omega)}+\norm{f}{\BMO{\cS_Y}{p}(\Omega)}\\
& = \sup_{y\in Y}\norm{{f_y}}{\BMO{\cS_{X}}{p}\!(X)}+\norm{f}{\BMO{\cS_Y}{p}(\Omega)}\\
& \leq \sup_{y\in Y}\sum_{i=1}^{k-1}\norm{f_y}{\BMO{\cS_{i}}{p}\!(X)}+\norm{f}{\BMO{\cS_Y}{p}(\Omega)}\\
&= \sup_{y\in Y}\sum_{i=1}^{k-1}\sup_{\xihat}\norm{({f_y})_\xihat}{\BMO{\cS_{i}}{p}\!(\Omega_i)}+\norm{f}{\BMO{\cS_Y}{p}(\Omega)}\\
& \leq \sum_{i=1}^{k-1}\sup_{(\xihat,y)}\norm{f_{(\xihat,y)}}{\BMO{\cS_{i}}{p}\!(\Omega_i)}+\norm{f}{\BMO{\cS_Y}{p}(\Omega)}\\
& = \sum_{i=1}^{k-1}\norm{f}{\BMO{\cS_{i}}{p}\!(\Omega)}+\norm{f}{\BMO{\cS_k}{p}(\Omega)}\\
& = \sum_{i=1}^{k}\norm{f}{\BMO{\cS_{i}}{p}\!(\Omega)}.
\end{split}
\]

To prove part (b) for $k > 2$, we have to be more careful. First note that if $\cS$ has the strong decomposition property, then so does $\cS_X$ defined by \eqref{eqn-cSx}.  We repeat the first part of the proof of (b) for the case $k = 2$ above, with $X = \Omega_1 \times \Omega_2 \times \ldots \times \Omega_{k-1}$ and $Y = \Omega_k$, leading up to the estimate \eqref{eqn-partb} for the function $f_{y_0}$ for some $y_0 \in Y$. Note that in this part we only used the differentiation properties of $Y$, which hold by hypothesis in this case since $Y = \Omega_k$.  Now we repeat the process for the function $f_{y_0}$ instead of $f$, with $X_1 = \Omega_1 \times \Omega_2 \times \ldots \times \Omega_{k-2}$ and $Y_1 = \Omega_{k-1}$. This gives
$$\norm{(f_{y_0})_{y_1}}{\BMO{\cS_{X_1} }{p}(X_1) }\leq 2\norm{f_{y_0}}{\BMO{ \cS}{p}(X)} \leq 4 \norm{f}{\BMO{ \cS}{p}(\Omega)} \quad \forall y_1 \in \Omega_{k-1}, y_0 \in \Omega_k.$$ 
We continue until we get to $X_{k-1} = \Omega_1$, for which $\cS_{X_k} = \cS_1$, yielding the estimate
$$\norm{f_{({y_{k-2}, \ldots, y_0)}}}{\BMO{\cS_1 }{p}(\Omega_1)}  \leq \ldots \leq 2^{k-2}\norm{f_{y_0}}{\BMO{ \cS}{p}(X)}
\leq 2^{k-1} \norm{f}{\BMO{ \cS}{p}(\Omega)}$$
for all $k-1$-tuples $y = (y_{k-2}, \ldots, y_0) \in \Omegatil = \Omega_2 \times \ldots \Omega_k$.
Taking the supremum over all such $y$, we have, by Definition~\ref{def-cS1}, that $f \in \BMO{\cS_{1}}{p}\!(\Omega)$ with
$$
\norm{f}{\BMO{\cS_{1}}{p}\!(\Omega)}=\sup_{y\in \Omegatil}\norm{{f_y}}{\BMO{\cS_{1}}{p}\!(\Omega_1)} \leq 2^{k-1} \norm{f}{\BMO{ \cS}{p}(\Omega)}.
$$
A similar process for $i = 2, \ldots k$ shows that $f \in \BMO{\cS_{i}}{p}\!(\Omega)$ with
$
\norm{f}{\BMO{\cS_{i}}{p}\!(\Omega)} \leq 2^{k-1} \norm{f}{\BMO{ \cS}{p}(\Omega)}.
$
As the factor of $2$ appears in the proof for $k = 2$ only when $p \neq 2$, the same will happen here.
\end{proof}

Since $\cQ$ satisfies the weak decomposition property, the claim of part (a) holds for $\BMO{}{}$, a fact pointed out in \cite{st} without proof. Also, it is notable that there was no differentiation assumption required for this direction.

In the proof of part (b), differentiation is key and the strong decomposition property of the basis cannot be eliminated as there would be no guarantee that arbitrary $S$ and $T$ would yield a shape $R$ in $\Omega$. In fact, if the claim were true for bases with merely the weak decomposition property, this would imply that $\BMO{}{}$ and $\BMO{\cR}{}$ are congruent, which is not true (see Example \ref{ex:1}). 

\section*{{\bf Acknowledgements}}

The authors would like to thank Almut Burchard for discussions that initiated much of the investigation that lead to this paper. 

\bibliographystyle{amsplain}

\end{document}